\documentclass[11pt]{article}
\usepackage[a4paper]{geometry}
\usepackage{amsmath,amssymb,amsthm,amsfonts,mathtools,amsbsy}
\usepackage{xcolor}
\usepackage{enumitem}
\usepackage{hyperref}
\hypersetup{colorlinks=true,linkcolor=blue!60!black,filecolor=purple!50!black,citecolor=blue!50!green}
\usepackage{url}
\usepackage{accents}
\usepackage[noabbrev,nameinlink,capitalise]{cleveref}
\crefname{lemma}{Lemma}{Lemmas}
\crefname{corollary}{Corollary}{Corollaries}
\crefname{theorem}{Theorem}{Theorems}
\crefname{assumption}{Assumption}{Assumptions}
\crefname{equation}{}{}
\usepackage{nicefrac}
\usepackage{cite}
\usepackage{xurl}
\usepackage{tikz}
\usepackage{verbatim}
\usetikzlibrary{shapes,arrows,positioning,fit,calc}
\usepackage{todonotes}
\newenvironment{eqs} %
 { \begin{equation} \begin{aligned} } %
 { \end{aligned} \end{equation} \ignorespacesafterend } %

\theoremstyle{definition}
\theoremstyle{plain}
\makeatletter
\@ifclassloaded{beamer}{}{%
\newtheorem{theorem}{Theorem}[section]
\newtheorem{lemma}[theorem]{Lemma}

\crefname{assumption}{Assumption}{Assumptions}
}
\makeatother
\newtheorem*{remark*}{Remark}

\newtheorem*{acknowledgment*}{Acknowledgment}

\newcommand{\Bhnorm}[1]{\Vert #1 \Vert_{B_h}}

\newcommand{\CA}{C_A}
\newcommand{\Ca}{C_a}
\newcommand{\ca}{c_a}
\newcommand{\cA}{c_A}
\newcommand{\CD}{C_*}
\newcommand{\CT}{C_\urcorner}
\newcommand{\cAP}{c_{A_0}}
\newcommand{\const}[1]{C_{\eqref{#1}}}

\newcommand{\bpm}{\begin{pmatrix}}
\newcommand{\epm}{\end{pmatrix}}

\newcommand{\Vsh}{V_{\!\star \hspace*{-0.2mm}h}}
\newcommand{\Wsh}{W_{\!\star \hspace*{-0.2mm}h}}

\newcommand{\dgnorm}[1]{\norm{#1}_{V_h}}
\newcommand{\tdgnorm}[1]{{\Vert#1\Vert}_{\Vsh}}
\newcommand{\dgonorm}[1]{\norm{#1}_{V_h,\omega}}
\newcommand{\tdgonorm}[1]{{\Vert#1\Vert}_{\Vsh,\omega}}

\newcommand{\jmp}[1]{[\![ #1 ]\!]}

\newcommand{\avg}[1]{\{\!\!\{#1\}\!\!\}}
\newcommand{\dom}{\Omega} 

\newcommand{\Th}{{\calT_h}}
\newcommand{\Fh}{{\calF_h}}
\newcommand{\Fhi}{{\calF_h^i}}
\newcommand{\Fhb}{{\calF_h^b}}

\newcommand{\AK}{{A_K}}
\newcommand{\ILh}{\mathbb{L}}

\newcommand{\Lh}{\mathbb{L}(\Th)}
\newcommand{\LKh}{\mathbb{L}(K)}

\newcommand{\QKh}{Q_h(K)}

\newcommand{\Qh}{Q_h}
\newcommand{\ATh}{A_{\Th}}
\newcommand{\Bh}{{B_h}}

\renewcommand{\Re}{\operatorname{Re}}
\renewcommand{\Im}{\operatorname{Im}}

\DeclareMathOperator{\tc}{T}
\DeclareMathOperator{\tcq}{T_\IL}
\DeclareMathOperator{\tct}{T_\IT}
\DeclareMathOperator{\tcb}{T_{B}}
\DeclareMathOperator{\tch}{T_h}

\DeclareMathOperator{\grad}{\nabla}

\DeclareMathOperator{\diam}{diam}

\newcommand{\inner}[1]{( #1 )}

\newcommand*{\snorm}[1]{\left|#1\right|}
\newcommand*{\norm}[1]{\left\|#1\right\|}

\newcommand\restr[2]{{ \left.\kern-\nulldelimiterspace #1 \vphantom{\big|} \right|_{#2} }}

\newcommand{\IC}{\mathbb{C}}

\newcommand{\IH}{\mathbb{H}}

\newcommand{\IL}{\mathbb{L}}

\newcommand{\IP}{\mathbb{P}}

\newcommand{\IR}{\mathbb{R}}

\newcommand{\IT}{\mathbb{T}}
\newcommand{\ITh}{\mathbb{T}}

\newcommand{\calF}{\mathcal{F}}

\newcommand{\calH}{\mathcal{H}}

\newcommand{\calT}{\mathcal{T}}

\definecolor{pscol}{rgb}{0.8,0,0}

\usepackage{pgfplots} 
\pgfplotsset{compat=newest}
\usepgfplotslibrary{colorbrewer,groupplots}
\pgfplotsset{
	discard if not/.style 2 args={
	x filter/.append code={
	\edef\tempa{\thisrow{#1}}
	\edef\tempb{#2}
	\ifx\tempa\tempb
	\else
	
	\fi
	}
	}
}
\usepackage{pgfplotstable} 
\usepackage{booktabs} 
\usepackage{colortbl}
\makeatletter
\pgfplotstableset{
	discard if not/.style 2 args={
	row predicate/.append code={
	\def\pgfplotstable@loc@TMPd{\pgfplotstablegetelem{##1}{#1}\of}
	\expandafter\pgfplotstable@loc@TMPd\pgfplotstablename
	\edef\tempa{\pgfplotsretval}
	\edef\tempb{#2}
	\ifx\tempa\tempb
	\else
	\pgfplotstableuserowfalse
	\fi
	}
	}
}
\makeatother

\pgfplotscreateplotcyclelist{paulcolors1}{%
	orange,every mark/.append style={solid,fill=orange},mark=square*,very thick,mark size=3pt\\%
	teal,densely dashed,every mark/.append style={solid,fill=teal},mark=*,very thick,mark size=3pt\\%
}
\pgfplotscreateplotcyclelist{paulcolors2}{%
	orange,every mark/.append style={solid,fill=orange},mark=square*,very thick,mark size=3pt\\%
	orange,densely dashed,every mark/.append style={solid,fill=orange},mark=*,very thick,mark size=3pt\\%
	teal,every mark/.append style={solid,fill=teal},mark=square*,very thick,mark size=3pt\\%
	teal,densely dashed,every mark/.append style={solid,fill=teal},mark=*,very thick,mark size=3pt\\%
    violet,every mark/.append style={solid,fill=violet},mark=square*,very thick,mark size=3pt\\%
    violet,densely dashed,every mark/.append style={solid,fill=violet},mark=*,very thick,mark size=3pt\\%
}

\pgfplotscreateplotcyclelist{paulcolors3}{%
	orange,every mark/.append style={solid,fill=orange},mark=square*,very thick,mark size=3pt\\%
	orange,densely dashed,every mark/.append style={solid,fill=orange},mark=*,very thick,mark size=3pt\\%
	orange,dotted,every mark/.append style={solid,fill=orange},mark=diamond*,very thick,mark size=3pt\\%
	teal,every mark/.append style={solid,fill=teal},mark=square*,very thick,mark size=3pt\\%
	teal,densely dashed,every mark/.append style={solid,fill=teal},mark=*,very thick,mark size=3pt\\%
	teal,dotted,every mark/.append style={solid,fill=teal},mark=diamond*,very thick,mark size=3pt\\%
	violet,every mark/.append style={solid,fill=violet},mark=square*,very thick,mark size=3pt\\%
	violet,densely dashed,every mark/.append style={solid,fill=violet},mark=*,very thick,mark size=3pt\\%
	violet,dotted,every mark/.append style={solid,fill=violet},mark=diamond*,very thick,mark size=3pt\\%
}

\pgfplotscreateplotcyclelist{paulcolors4}{%
	orange,every mark/.append style={solid,fill=orange},mark=square*,very thick,mark size=3pt\\%
	teal,every mark/.append style={solid,fill=teal},mark=*,very thick,mark size=3pt\\%
	violet,every mark/.append style={solid,fill=violet},mark=diamond*,very thick,mark size=3pt\\%
    cyan,every mark/.append style={solid,fill=cyan},mark=star,very thick,mark size=3pt\\%
}

\title{Embedded Trefftz DG method for the Helmholtz equation}
\author{ 
Paul Stocker\thanks{Faculty of Mathematics, University of Vienna, Austria} 
\and 
Igor Voulis\thanks{Institut für Numerische und Angewandte Mathematik, Georg-August Universität Göttingen, Germany}
}

\date{}

\begin{document}
\maketitle

\begin{abstract}
We study an embedded Trefftz discontinuous Galerkin method for the Helmholtz equation.
The method starts from a polynomial DG space and enforces the Trefftz property through local constraints, avoiding an explicit construction of Trefftz basis functions.
For the global coupling we use a simple symmetric interior penalty DG bilinear form.
Since the resulting formulation is not coercive, stability is proved by a $T$-coercivity argument combined with a Schatz-type duality technique.
This yields wavenumber-explicit stability, quasi-optimality, and convergence estimates in standard DG norms under an explicit mesh resolution condition.

\medskip
\noindent\textbf{Keywords}: 
Helmholtz equation,
discontinuous Galerkin,
embedded Trefftz method

\medskip
\noindent\textbf{MSC2020}:
65N30, 
65N15, 
65N12,
35J05
\end{abstract}

\section{Introduction}
We present a wavenumber-explicit convergence analysis of an embedded Trefftz discontinuous Galerkin (DG) method for the Helmholtz equation, based on a simple symmetric interior penalty DG (SIPDG) formulation.
We prove stability and quasi-optimality by a Schatz-type duality argument under an explicit resolution condition.

DG methods provide a flexible framework for wave problems, allowing for local approximation spaces, nonmatching meshes, and robust handling of boundary conditions \cite{ABCM02}.
Trefftz methods are motivated by the desire to reduce the number of degrees of freedom using local constraints to select a PDE-adapted approximation space.
There are several classical examples. The Ultra Weak Variational Formulation (UWVF) \cite{CD98,BM08} and
Trefftz DG methods with plane-wave bases (often termed TDG/PWDG) yield highly compact discretizations and have been analyzed in mesh-skeleton norms via nonstandard inverse and approximation estimates tailored to plane waves \cite{GHP09,HMP11,HMP16,HMP16pwdg}.
Compared to classical Trefftz DG/UWVF/PWDG approaches, the embedded formulation permits a standard DG-style analysis on an underlying polynomial space, with the Trefftz property enforced through local constraints rather than through a bespoke non-polynomial skeleton calculus.

In this work we study an \emph{embedded Trefftz DG} method for the Helmholtz problem.
The starting point is a standard polynomial DG space on a mesh $\Th$, together with a local constraint mechanism that selects a PDE-adapted subspace, as introduced in \cite{LS_IJMNE_2023,lozinski19}.
The method avoids the explicit construction of local Trefftz bases and instead represents the Trefftz space through an embedding into the full polynomial DG space.
This approach was already tested numerically for several problems, including Helmholtz, in \cite{LS_IJMNE_2023}, Stokes in 
\cite{LLS_NM_2024}
unfitted problems in \cite{HLSW_M2AN_2022,S23}, 
parabolic problems in \cite{GPS_JSC_2025,H24},
and placed into a general abstract analysis framework in \cite{LLSV_ARXIV_2024}.

The SIPDG formulation we consider is deliberately simple and differs from the stronger stabilization mechanisms used in other Helmholtz discretizations, such as the UWVF and generalized DG formulations \cite{MPS13,CD98,M11} and strongly stabilized IPDG schemes \cite{FW09,FW11}, which build unconditional well-posedness into the bilinear form through additional derivative penalties and complex stabilization.
In \cite{MPS13}, resolution conditions such as $\omega h/\sqrt{p}$ sufficiently small and $\log(\omega)/p$ sufficiently small, stemming from the analysis in \cite{MS10,MS11}, are required for quasi-optimality of $hp$-UWVF/generalized DG methods.
As we only rely on a simple Schatz-type argument and no additional stabilization, our results come with a classical, more restrictive mesh condition \cite{MM95}, even for well-posedness, but they yield a transparent and fully explicit constant-tracking theory for the \emph{embedded Trefftz} paradigm.

For conforming finite element methods, stability and convergence typically require wavenumber-dependent resolution conditions and duality arguments of Schatz type \cite{Schatz74,IB95,IB97,MM95}, with refined analyses available 
for a sharper resolution condition in the $hp$-setting \cite{MS10,MS11}, and
for heterogeneous or piecewise smooth coefficients \cite{BCM25}.
In this paper we follow the same general philosophy: we derive wavenumber-explicit stability and error estimates under an explicit resolution condition.
Concretely, we require
\begin{equation}\label{eq:intro_resolution}
(1+\omega^2)\,h \le C_\Omega,
\end{equation}
with a domain-dependent constant $C_\Omega$.

In contrast to Trefftz methods, Hybridizable DG (HDG) methods reduce the global coupling to a skeleton space.
They have also been analyzed for the Helmholtz problem; see, for example, \cite{CGL09,GM11,CLX13}.
An in-depth comparison on general meshes between the Trefftz approach, HDG, and other methods is given in \cite{LSZ_PAMM_2024}.
While performance depends strongly on the number of faces per element and on the operator considered, the embedded Trefftz approach is competitive, especially for a higher number of facets per element.
We also point out that it goes far beyond the reduction achieved by condensing bubble functions in a standard DG method.

\paragraph{Contributions.}
We view the present work as establishing a baseline theory for embedded Trefftz-DG discretizations of Helmholtz: we analyze a simple global DG formulation and prove $T$-coercive stability and quasi-optimality by a Schatz-type argument under \eqref{eq:intro_resolution}.
\begin{itemize}
\item We apply a wavenumber-explicit Schatz-type duality argument to the embedded Trefftz DG method for Helmholtz, building on the abstract embedded framework of \cite{LLSV_ARXIV_2024}, and obtain stability in the $T$-coercive setting under \eqref{eq:intro_resolution}. Beyond its role in the present proof, this is of independent interest: it illustrates how stability properties can be transferred to a Trefftz-constrained space.
\item We derive wavenumber-explicit stability and quasi-optimality estimates in standard DG norms and combine them with polynomial $hp$ best-approximation (via a suitable (Trefftz) embedding mechanism), yielding a convergence theory for piecewise polynomial embedded Trefftz spaces (again under an explicit resolution condition).
\item The analysis is fully explicit in all constants, providing a transparent baseline theory for the embedded Trefftz paradigm and a clear roadmap for further developments. In particular, it paves the road to reach more demanding high-frequency regimes: improved fluxes (e.g.\ UWVF-type couplings), stronger stabilization, or refined local constraint spaces.
\end{itemize}

\paragraph{Outline.}
The remainder of the paper is organized as follows.
In \cref{sec:prelim} we introduce the abstract framework from \cite{LLSV_ARXIV_2024} and recall the Helmholtz problem and some standard inequalities.
In \cref{sec:etdg} we present the embedded Trefftz DG formulation for Helmholtz and verify the assumptions of the abstract framework, showing in particular the well-posedness of the formulation, wavenumber-explicit stability and quasi-optimality estimates under the resolution condition \eqref{eq:intro_resolution}.
Finally, in \cref{sec:numerics} we present numerical experiments that illustrate the theoretical findings.

\section{Preliminaries}\label{sec:prelim}
We will work in the abstract framework of \cite{LLSV_ARXIV_2024} to study the embedded Trefftz DG method for the Helmholtz equation. In this section we will first recall the main results of the abstract framework and then introduce the Helmholtz problem. 
\subsection{Abstract framework}

We revisit the abstract framework for embedded Trefftz DG methods from \cite{LLSV_ARXIV_2024}, which provides a general setting for analyzing such methods and deriving stability and quasi-optimality estimates. The proofs from \cite{LLSV_ARXIV_2024} can be adapted to to a complex-valued setting. For completeness, we provide the proofs of the main results in the appendix.

Let $V$ and $W$ be Hilbert spaces. We typically think about a Sobolev space on an open bounded Lipschitz domain $\Omega\subset \IR^d$ with $d=2,3$ (where a PDE problem may be posed). 
We consider the following abstract problem: Find $u\in V$ such that
\begin{equation}\label{eq:abstract} 
    \addtocounter{equation}{1} \tag{$\texttt{PDE}|\theequation$}
    a(u,v) = \ell(v) \quad \forall v\in W.
\end{equation}

Let $\Th$ be a mesh of $\Omega$. We fix a finite-dimensional trial space $V_h$, local spaces $\QKh$ for $K\in\Th$, and the associated global space
$
\Qh := \bigoplus_{K\in\Th}\QKh.
$
For each element $K\in\Th$ let $\AK:V_h\to\QKh'$ be a local linear operator and let $\ell_K\in\QKh'$ be a local right-hand side. 
We also consider a discrete sesquilinear form $a_h:(V+V_h)\times V_h\to\IC$ and a discrete right-hand side $\ell_h\in V_h'$.
We consider the embedded Trefftz space
\begin{align*}
    \ITh(K)&= \{v_h \in \IP^p(K) \mid \inner{\AK  v_h, q_h}_K=0,\ \forall q_h\in \QKh \}.
\end{align*}
The embedded Trefftz DG method reads
\begin{equation}\label{eq:PDEh}
\addtocounter{equation}{1} \tag{$\texttt{PDE}^{\IT}_h|\theequation$}
\begin{alignedat}{3}
    \text{Find } u_h \in V_h 
    \text{ such that } &&\quad\inner{\AK  u_h, q_h}_K&=\ell_K(q_h) &&\quad \forall q_h\in \QKh, K\in\Th,
    \\ \text{ and }  && \quad a_h(u_h,v_{\IT})&=\ell_h(v_{\IT}) &&\quad \forall v_{\IT}\in \tch \ITh,
\end{alignedat}
\end{equation}
for a suitable linear operator $\tch:\IT \to V_h$.
We introduce the bilinear form of the left-hand side of \eqref{eq:PDEh} as $\Bh:V_h\times Z_h\to\IC$, for the product test space $Z_h := \Qh \times \tch\ITh$, by
\[
  \Bh(u_h,(q_h,v_h)) := \sum_{K\in\Th}\langle \AK u_h,q_K\rangle + a_h(u_h,v_h).
\]
The following theorem provides well-posedness and a general C\'ea-type error estimate for the embedded Trefftz DG method, which will be the basis for our analysis of the Helmholtz problem.

\newcommand{\cea}{\rm{cea}}
\begin{theorem}[{\cite[\S 2.5]{LLSV_ARXIV_2024}}] \label{cor:cea}
Consider $\Vsh := V_h + V$ with two norms $\dgnorm{\cdot}$ and $\tdgnorm{\cdot}$ such that
$\dgnorm{\cdot}\leq \tdgnorm{\cdot}$ on $\Vsh$ and such that $\dgnorm{\cdot}$ and $\tdgnorm{\cdot}$ are equivalent on $V_h$.
Denote by $\CD\geq 1$ a constant such that
\begin{equation}\label{eq:Cstar}
  \tdgnorm{v_h}\le \CD\,\dgnorm{v_h}\qquad\forall v_h\in V_h.
\end{equation}
Assume that each $v_h\in V_h$ admits a fixed (linear) splitting $v_h=v_\IL+v_\IT$ with $v_\IL\in\ILh$ and $v_\IT\in\ITh$.

Let the following assumptions hold:
\begin{enumerate}[label=\roman*)]
\item There exist spaces $\LKh \subset V_h$ such that the linear maps
$\AK : \LKh \to \QKh'$ satisfy
\begin{equation}\label{eq:A_coerc}
    \norm{\AK  u_h }_{\QKh'} \geq \cA \dgnorm{u_h}
    \quad \forall u_h \in \ILh(K),\ K\in\Th
\end{equation} and $\IL = \bigoplus_{K\in \Th} \IL(K)$.
\item Simultaneous continuity on $\Vsh$, i.e.
\begin{equation}\label{eq:A_cont}
    \sum_{K\in\Th} \norm{\AK  u }_{\QKh'}^2 \leq \CA^2 \tdgnorm{u}^2
    \quad \forall u\in \Vsh.
\end{equation}
\item There exists a uniformly bounded injective linear operator $\tch:\ITh \to V_h$ such that for all $u_h\in \ITh$
\begin{equation}\label{eq:ah_coerc}
        \Re a_h(u_h,\tch u_h) \geq \ca \dgnorm{u_h}^2,
        \qquad
        \dgnorm{\tch u_h}\leq \dgnorm{u_h}.
\end{equation}
\item We assume that $a_h(\cdot,\cdot)$ is defined on $\Vsh \times \tch \ITh$ and continuous in the sense that
\begin{equation}\label{eq:ah_cont}
    |a_h(u,v_h)| \leq \Ca \tdgnorm{u} \dgnorm{v_h}
    \quad \forall u\in \Vsh,\ \forall v_h\in \tch \ITh.
\end{equation}
Note that these assumptions imply that there exists a constant $\CT$ such that
\begin{equation}\label{eq:CT_cont}
    |a_h(u_\IL,\tch v_\IT)|
    \leq \CT \norm{\ATh u_\IL}_{\Qh'} \sqrt{\Re a_h(v_\IT,\tch v_\IT)}
    \quad \forall u_\IL\in \ILh,\ \forall v_\IT\in \ITh.
\end{equation}
Indeed, by \eqref{eq:ah_cont}, \eqref{eq:Cstar}, \eqref{eq:A_coerc}, and \eqref{eq:ah_coerc},
we have 
$
\CT \le \frac{\Ca\,\CD}{\cA\sqrt{\ca}}.
$
\end{enumerate}

Then \eqref{eq:PDEh} is well-posed.
Furthermore, there exists a linear injective operator $\tcb:V_h\to Z_h$, such that for all $u_h\in V_h$,
\begin{equation}
  \Re \Bh(u_h,\tcb u_h) \ge \ca(\dgnorm{u_\IT}^2 + \dgnorm{u_\IL}^2) \ge \frac{\ca}{2}\,\dgnorm{u_h}^2
\end{equation}
with
\[
\alpha:=\CT^2+\frac{\ca}{\cA^2},
\qquad
\norm{\tcb u_h}_{Z_h}^2 \leq 4\,\dgnorm{u_\IT}^2 + \CD^2\CA^2\alpha^2 \dgnorm{u_\IL}^2.
\]

Let $u\in V$ be the solution to \eqref{eq:abstract}, and let $u_h\in V_h$ be the solution to \eqref{eq:PDEh}. Then there holds
\begin{eqs}\label{eq:cea}
\dgnorm{v_h-u_h} \leq C_{\cea}^{(1)} \tdgnorm{v_h - u} + C_{\cea}^{(2)}\norm{\ATh u - \ell_\Th}_{\Qh'} + C_{\cea}^{(3)}\norm{a_h(u,\cdot)-\ell_h}_{\tch\ITh'}.\\
\end{eqs}
with the three constants defined by
\begin{equation*}
C_{\cea}^{(1)}:=2\CD\Big( \frac{\CA^2}{\cA^2} +\frac{\CA^2}{\cA^2}\frac{\Ca^2}{\ca^2}\CD^2 +2\frac{\Ca}{\ca} \Big),\
C_{\cea}^{(2)}:=2\CD\Big( \frac{\CA}{\cA^2}\frac{\Ca^2}{\ca^2}\CD^2 +\frac{\CA}{\cA^2} \Big),\
C_{\cea}^{(3)}:=4\CD\frac{1}{\ca}.
\end{equation*}
\end{theorem}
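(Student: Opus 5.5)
The plan is to construct the test operator $\tcb$ explicitly from the splitting $u_h=u_\IL+u_\IT$ and then derive the C\'ea estimate from the resulting inf-sup bound by the usual Strang-type argument.

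\emph{Construction of $\tcb$.} For $u_h\in V_h$ we set
\[
\tcb u_h := \bigl(\alpha\, R_Q \ATh u_\IL,\ \tch u_\IT\bigr)\in Z_h ,
\]
where $R_Q:\Qh'\to\Qh$ is the Riesz map. Since $\AK$ only sees $K$, this is an elementwise object: $\sum_K\langle \AK u_\IL, q_K\rangle=\norm{\ATh u_\IL}_{\Qh'}^2$ for $q=R_Q\ATh u_\IL$. Expanding $\Bh$ gives four pieces. The $\tch u_\IT$ component tests the Trefftz part. Because $u_\IT\in\ITh$ satisfies $\inner{\AK u_\IT,q}_K=0$, the term $\langle \AK u_\IT,\cdot\rangle$ vanishes. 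What remains is
\[
\Re\Bh(u_h,\tcb u_h)=\alpha\norm{\ATh u_\IL}^2_{\Qh'}+\Re a_h(u_\IT,\tch u_\IT)+\Re a_h(u_\IL,\tch u_\IT).
\]
The cross term is controlled by \eqref{eq:CT_cont} together with Young's inequality:
\[
|a_h(u_\IL,\tch u_\IT)|\le \tfrac{\CT^2}{2}\cdot 2\norm{\ATh u_\IL}^2+\tfrac12 \Re a_h(u_\IT,\tch u_\IT).
\]
One has to fix the factor so that the leftover is $\ge(\ca/\cA^2)\norm{\ATh u_\IL}^2+\tfrac12\Re a_h$. If the constant needs adjusting, I will rescale the $\tch u_\IT$ component by $2$, which matches the factor $4$ in the norm bound. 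Applying \eqref{eq:A_coerc} and \eqref{eq:ah_coerc} then yields $\ca(\dgnorm{u_\IT}^2+\dgnorm{u_\IL}^2)$. The bound $\ge\frac{\ca}{2}\dgnorm{u_h}^2$ follows from the triangle inequality.

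\emph{Norm of $\tcb u_h$ and well-posedness.} For the norm, $\norm{R_Q\ATh u_\IL}=\norm{\ATh u_\IL}_{\Qh'}\le \CA\tdgnorm{u_\IL}\le\CA\CD\dgnorm{u_\IL}$ by \eqref{eq:A_cont} and \eqref{eq:Cstar}, which gives the stated bound. Injectivity of $\tcb$ follows from the coercivity estimate. Well-posedness then follows from the discrete inf-sup condition. Since the system is square, I need to check $\dim V_h=\dim Z_h$, or argue surjectivity via injectivity plus a dimension count. This bookkeeping is the step I expect to be most delicate; if the dimensions do not simply match, I will argue via uniqueness.

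\emph{C\'ea estimate.} Let $e_h=v_h-u_h$. By Galerkin orthogonality,
\[
\Bh(e_h,z)=\Bh(v_h-u,z)+\bigl[(\ATh u-\ell_\Th)(q)+(a_h(u,\cdot)-\ell_h)(w)\bigr],\qquad z=(q,w).
\]
Test with $z=\tcb e_h$ and take real parts. The splitting is then inserted into each piece. The $\IL$-component is bounded by $\alpha\CA\tdgnorm{v_h-u}\,\norm{\ATh e_\IL}$ plus the consistency term. For the $\IT$-component, \eqref{eq:ah_cont} and the residual term are bounded against $\dgnorm{\tch e_\IT}\le\dgnorm{e_\IT}$. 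Combined with $\norm{\ATh e_\IL}\le\CA\CD\dgnorm{e_\IL}$ and Young's inequality, this absorbs everything into $\ca(\dgnorm{e_\IT}^2+\dgnorm{e_\IL}^2)$. Tracking the constants, with $\alpha\le \Ca^2\CD^2/(\cA^2\ca)+\ca/\cA^2$, should reproduce $C_{\cea}^{(i)}$ up to the bookkeeping.
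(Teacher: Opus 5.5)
Your proposal is essentially the paper's proof. It uses the same test operator $\tcb u_h=(\alpha R\,\ATh u_\IL,\ 2\tch u_\IT)$ with $R$ the (elementwise) Riesz lift. The factor $2$ you anticipate is needed: without it, absorbing the cross term via \eqref{eq:CT_cont} leaves at best $\tfrac34\ca\dgnorm{u_\IT}^2$. You then use the same Strang-type splitting $\Bh(e_h,\tcb e_h)=\Bh(v_h-u,\tcb e_h)+\Bh(u-u_h,\tcb e_h)$ with the component bounds $\alpha\CA\CD\dgnorm{e_\IL}$ and $2\dgnorm{e_\IT}$.

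The paper does not discuss the dimension count you flag. It holds because \eqref{eq:A_coerc} makes $V_h=\ILh\oplus\ITh$ a direct sum and $\tch$ is injective, provided each $\AK$ maps $\LKh$ onto $\QKh'$ (as \cref{lem:abstractneumann} delivers). Without that surjectivity, uniqueness alone would not give existence.
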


The following lemma allows us to verify the assumption \eqref{eq:A_coerc} based on a perturbation argument.
\begin{lemma}[see {\cite[Lemma 3.4]{LLSV_ARXIV_2024}}]
    \label{lem:abstractneumann}
    Let $A_{K,0}:\LKh \to \QKh'$ be invertible with 
    \begin{equation}\label{eq:A0_coerc}
        \norm{A_{K,0}  u_h }_{\QKh'} \geq \cAP \norm{u_h}_{V_h}
        \quad \forall u_h \in \ILh(K),\ K\in\Th.
    \end{equation}
    there exists a constant $\gamma \in (0,1)$ such that
    \begin{equation}\label{eq:nearlycontant}
        \norm{\AK  u - A_{K,0}u}_{\QKh'}\le\gamma\norm{A_{K,0}u}_{\QKh'} \quad \forall u\in \LKh,
    \end{equation}
    then the restriction $\AK : \LKh \to \QKh'$ is invertible, with
    \begin{equation}\label{eq:A_A0_coerc}
        \norm{A_{K}  u_h }_{\QKh'} \geq \cAP(1-\gamma)\, \norm{u_h}_{V_h}
        \quad \forall u_h \in \ILh(K),\ K\in\Th.
    \end{equation}
\end{lemma}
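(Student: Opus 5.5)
The argument is a standard Neumann-series perturbation, carried out element by element; fix $K\in\Th$. By assumption, $A_{K,0}:\LKh\to\QKh'$ is invertible. Since both spaces are finite-dimensional, this forces $\dim\LKh=\dim\QKh'$. It therefore suffices to prove the lower bound \eqref{eq:A_A0_coerc}. That bound gives injectivity of $\AK$ on $\LKh$, and injectivity together with equal dimensions gives invertibility.

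For the lower bound, take $u_h\in\LKh$ and write $\AK u_h = A_{K,0}u_h + (\AK-A_{K,0})u_h$. The reverse triangle inequality in $\QKh'$ gives
\[
\norm{\AK u_h}_{\QKh'} \ge \norm{A_{K,0}u_h}_{\QKh'} - \norm{(\AK - A_{K,0})u_h}_{\QKh'}.
\]
By the near-constancy hypothesis \eqref{eq:nearlycontant}, the subtracted term is at most $\gamma\norm{A_{K,0}u_h}_{\QKh'}$. Hence
\[
\norm{\AK u_h}_{\QKh'} \ge (1-\gamma)\norm{A_{K,0}u_h}_{\QKh'}.
\]
Because $1-\gamma>0$, we may apply \eqref{eq:A0_coerc} and conclude $\norm{\AK u_h}_{\QKh'}\ge \cAP(1-\gamma)\norm{u_h}_{V_h}$, which is \eqref{eq:A_A0_coerc}.

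The resulting constant is uniform in $K$, since $\gamma$ and $\cAP$ do not depend on $K$. This bound is exactly assumption \eqref{eq:A_coerc} of \cref{cor:cea}, with $\cA=\cAP(1-\gamma)$.

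There is no real obstacle here. The only points needing care are that the argument uses $\gamma<1$, and that surjectivity follows from injectivity only through the dimension count. The dimension count in turn comes from the assumed invertibility of $A_{K,0}$ between the same pair of spaces.

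Alternatively, surjectivity can be obtained from a Neumann series without counting dimensions. Write
\[
\AK = \bigl(I + (\AK - A_{K,0})A_{K,0}^{-1}\bigr)A_{K,0}.
\]
Hypothesis \eqref{eq:nearlycontant}, applied to $u=A_{K,0}^{-1}f$, shows that $\norm{(\AK-A_{K,0})A_{K,0}^{-1}f}_{\QKh'}\le\gamma\norm{f}_{\QKh'}$, so this operator has norm at most $\gamma<1$ on $\QKh'$. The bracketed factor is therefore invertible, and so is $\AK$.
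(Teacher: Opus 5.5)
Your proof is correct: the reverse triangle inequality with \eqref{eq:nearlycontant} and \eqref{eq:A0_coerc} gives \eqref{eq:A_A0_coerc}, and invertibility follows either from injectivity plus the dimension count or from your factorization $\AK=(I+(\AK-A_{K,0})A_{K,0}^{-1})A_{K,0}$ with a Neumann series. The paper does not reprove this lemma but cites \cite[Lemma 3.4]{LLSV_ARXIV_2024}, and your argument is the standard Neumann-series perturbation argument behind that citation, which is also how the lemma is used in \cref{lem:loccoerc}.
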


Finally, if one wishes for bounds in weaker norms, one can apply an Aubin--Nitsche-type duality argument, to obtain the following result.
\begin{theorem} \label{th:aubinnitsche}
Assume that the assumptions from \cref{cor:cea} hold and additionally that $\Vsh$ and $\Wsh$ are Hilbert spaces such that
$\Vsh\subset \IH$ and that there exists $C_{\IH}>0$ such that
\begin{equation}\label{eq:IH_DG_dom}
    \norm{v}_{\IH}\le C_{\IH}\dgnorm{v}\qquad\forall v\in\Vsh.
\end{equation}
Assume moreover that the norms $\dgnorm{\cdot}$ and $\tdgnorm{\cdot}$ are equivalent on $V_h$ (with a constant $\CD$ as in \eqref{eq:Cstar}).
Furthermore, assume the adjoint consistency
\begin{equation}
\label{eq:a-bit-consistent-exp}
a_h(v, z) = a(v,z) \quad \forall v \in \Vsh,\ \forall z \in \{y\in W\mid a(\cdot,y)\in \IH' \},
\end{equation}
and the adjoint continuity: there exists $C_{\rm adj}>0$ such that
\begin{equation}
\label{eq:a-bit-cont-exp}
|a_h(v, z)| \le C_{\rm adj}\,\tdgnorm{v}\,\norm{z}_{\Wsh}
\quad \forall v \in \Vsh,\ \forall z \in  W_h + \{y\in W\mid a(\cdot,y)\in \IH' \}.
\end{equation}
Assume, in addition, that the adjoint problem is well posed with $\IH$-data, i.e.,
for every $\phi\in \IH$ there exists (at least one) $z_\phi\in W$ such that
\begin{equation}\label{eq:adjoint-IH-wellposed}
a(v,z_\phi) = (\phi,v)_{\IH}\qquad\forall v\in V.
\end{equation}
Assume that there exist constants $h_H>0$, $C_{\rm reg}^{(1)}>0$, $C_{\rm reg}^{(2)}>0$ such that for all
$z\in W$ with $a(\cdot,z)\in \IH'$,
\begin{align}
\label{eq:H-reg-bound-exp}
\inf_{z_h \in \tch \ITh} \norm{z -z_h}_{ W_h}
&\le C_{\rm reg}^{(1)}\,h_H \sup_{v\in V} \frac{|a(v,z)|}{\norm{v}_{\IH}},
\\
\label{eq:H-reg-bound2-exp}
\inf_{w_h \in  W_h} \norm{z -w_h}_{\Wsh}
&\le C_{\rm reg}^{(2)}\,h_H \sup_{v\in V} \frac{|a(v,z)|}{\norm{v}_{\IH}}.
\end{align}

Let $u\in V$ solve \eqref{eq:abstract} and $u_h\in V_h$ solve \eqref{eq:PDEh}.
Using the notation for the three constants appearing in \eqref{eq:cea}
Then the $\IH$-error satisfies
\begin{equation}\label{eq:IH_bound_framework}
\begin{aligned}
\norm{u-u_h}_{\IH}
\le &
\frac{C_{\IH}}{\ca}\,\norm{a_h(u,\cdot)-\ell_h}_{\tch\ITh'}
\\
&\ +\ C_{\rm adj}\,(C_{\rm reg}^{(1)}+2C_{\rm reg}^{(2)})\,h_H
\Bigg[
\bigl(1+2\CD+\CD\,C_{\cea}^{(1)}\bigr)\inf_{v_h\in V_h}\tdgnorm{u-v_h}
\\
&
+\, \CD\,C_{\cea}^{(2)}\,\norm{\ATh u-\ell_\Th}_{\Qh'}
+\, \CD\Bigl(C_{\cea}^{(3)}+\frac1{\ca}\Bigr)\norm{a_h(u,\cdot)-\ell_h}_{\tch\ITh'}
\Bigg].
\end{aligned}
\end{equation}
\end{theorem}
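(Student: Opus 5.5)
We argue by an Aubin--Nitsche duality argument. Set $e:=u-u_h\in\Vsh$ and $\phi:=e\in\IH$. By \eqref{eq:adjoint-IH-wellposed} there is $z:=z_\phi\in W$ with $a(v,z)=(\phi,v)_{\IH}$ for all $v\in V$. Moreover $a(\cdot,z)\in\IH'$ with $\sup_{v}|a(v,z)|/\norm{v}_{\IH}\le\norm{e}_{\IH}$. Hence adjoint consistency \eqref{eq:a-bit-consistent-exp} applies and, extending to $\Vsh$,
\[
\norm{e}_{\IH}^2=(e,e)_{\IH}=a_h(e,z).
\]
Strictly, \eqref{eq:adjoint-IH-wellposed} is stated on $V$ only. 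To pass to $\Vsh$ we use that $a_h(v,z)=a(v,z)$ is understood as a continuous extension by the identity $(\phi,v)_{\IH}$ on $\Vsh\subset\IH$. We take this as the intended meaning of \eqref{eq:a-bit-consistent-exp}.

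Next we subtract a discrete test function. Pick $z_h\in\tch\ITh$ attaining (up to $\epsilon$) the infimum in \eqref{eq:H-reg-bound-exp}, and write
\[
\norm{e}_{\IH}^2=a_h(e,z-z_h)+a_h(e,z_h).
\]
The second term is the Galerkin-orthogonality defect. By \eqref{eq:PDEh}, $a_h(u_h,z_h)=\ell_h(z_h)$, so $a_h(e,z_h)=a_h(u,z_h)-\ell_h(z_h)$. Using $\dgnorm{z_h}\le\norm{z}_{W_h}+\norm{z-z_h}_{W_h}$-type control, or more simply bounding by $\norm{a_h(u,\cdot)-\ell_h}_{\tch\ITh'}\dgnorm{z_h}$ together with a stability bound $\dgnorm{z_h}\lesssim \norm{e}_{\IH}$ obtained through \eqref{eq:IH_DG_dom}, we expect this to produce the first term $\frac{C_{\IH}}{\ca}\norm{a_h(u,\cdot)-\ell_h}_{\tch\ITh'}$. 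This term does not carry $h_H$, consistent with the statement.

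A cleaner route for this term is to note that the residual functional is controlled via the coercivity \eqref{eq:ah_coerc}. That is the origin of the $1/\ca$ together with $C_{\IH}$ converting to the $\IH$ norm after dividing by $\norm{e}_{\IH}$. This step is where I expect the bookkeeping to be most delicate: one must identify exactly which quantity $z_h$ is paired with in order to reproduce the constant $\frac{C_{\IH}}{\ca}$ and not an $h_H$-weighted one.

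For the first term we use adjoint continuity \eqref{eq:a-bit-cont-exp}: $|a_h(e,z-z_h)|\le C_{\rm adj}\tdgnorm{e}\norm{z-z_h}_{\Wsh}$. We bound $\norm{z-z_h}_{\Wsh}$ by inserting $w_h\in W_h$ from \eqref{eq:H-reg-bound2-exp} and comparing $z_h$ and $w_h$ in $W_h$ with the triangle inequality. This yields $\norm{z-z_h}_{\Wsh}\le (C_{\rm reg}^{(1)}+2C_{\rm reg}^{(2)})h_H\norm{e}_{\IH}$, which is the origin of the factor $2$. For $\tdgnorm{e}$ we split $e=(u-v_h)+(v_h-u_h)$ for arbitrary $v_h\in V_h$. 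The second piece uses \eqref{eq:Cstar} and then the C\'ea estimate \eqref{eq:cea} of \cref{cor:cea}, giving $\CD(C_{\cea}^{(1)}\tdgnorm{v_h-u}+C_{\cea}^{(2)}\norm{\ATh u-\ell_\Th}_{\Qh'}+C_{\cea}^{(3)}\norm{a_h(u,\cdot)-\ell_h}_{\tch\ITh'})$.

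The extra constants $2\CD$ and $\CD/\ca$ in the bracket come from the analogous treatment of the discrete adjoint pairing; we absorb them with generous constants. Dividing by $\norm{e}_{\IH}$ and taking the infimum over $v_h$ gives \eqref{eq:IH_bound_framework}. The main obstacle is matching the precise constants. The structure of the proof, namely duality, subtraction of a discrete adjoint approximation, and the C\'ea bound, is standard.
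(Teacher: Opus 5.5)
\textbf{There is a genuine gap in how you handle the Galerkin-orthogonality defect.}

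With $e=u-u_h$ you split $\norm{e}_{\IH}^2=a_h(e,z-z_h)+a_h(e,z_h)$ and bound $|a_h(e,z_h)|\le \norm{a_h(u,\cdot)-\ell_h}_{\tch\ITh'}\dgnorm{z_h}$. You then need $\dgnorm{z_h}\lesssim\norm{e}_{\IH}$, and nothing in the hypotheses provides this.
\begin{itemize}
\item \eqref{eq:IH_DG_dom} goes the opposite way: it bounds the $\IH$-norm by the DG norm.
\item \eqref{eq:H-reg-bound-exp}--\eqref{eq:H-reg-bound2-exp} only control the approximation error $z-z_h$, not the size of $z$ itself.
\end{itemize}
You would need an extra stability assumption for the dual solution in $\norm{\cdot}_{W_h}$. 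Even then, the constant in front of the residual would be that dual-stability constant (plus $C_{\rm reg}^{(1)}h_H$), not $C_{\IH}/\ca$. So this route cannot reproduce the first term of \eqref{eq:IH_bound_framework}. Your remark that the coercivity \eqref{eq:ah_coerc} should be the source of $1/\ca$ is the right instinct, but you never turn it into an argument. In the fully consistent case, where the residual vanishes, your argument is complete.

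\textbf{The missing idea is a discrete corrector that restores exact orthogonality \emph{before} duality is applied.}
\begin{enumerate}
\item Define $e_h\in\ITh$ by $a_h(e_h,w_h)=a_h(u_h-u,w_h)=-(a_h(u,\cdot)-\ell_h)(w_h)$ for all $w_h\in\tch\ITh$. This is well posed: \eqref{eq:ah_coerc} makes $e_h\mapsto a_h(e_h,\cdot)|_{\tch\ITh}$ injective, and $\tch$ is injective, so the dimensions agree.
\item Test with $w_h=\tch e_h$ and use $\dgnorm{\tch e_h}\le\dgnorm{e_h}$. This gives $\dgnorm{e_h}\le \ca^{-1}\norm{a_h(u,\cdot)-\ell_h}_{\tch\ITh'}$.
\item By \eqref{eq:IH_DG_dom}, $\norm{e_h}_{\IH}\le (C_{\IH}/\ca)\norm{a_h(u,\cdot)-\ell_h}_{\tch\ITh'}$, which is exactly the first term.
\item Apply your duality argument to $\tilde e:=u_h-u-e_h$ rather than to $u-u_h$. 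Since $a_h(\tilde e,z_h)=0$ for all $z_h\in\tch\ITh$, you get $\norm{\tilde e}_{\IH}\le C_{\rm adj}(C_{\rm reg}^{(1)}+2C_{\rm reg}^{(2)})h_H\tdgnorm{\tilde e}$.
\item Bound $\tdgnorm{\tilde e}\le\tdgnorm{u_h-u}+\CD\dgnorm{e_h}$. The last piece is precisely where the $\CD/\ca$ in the bracket comes from, not from some ``generous absorption''.
\end{enumerate}
Your bound on $\tdgnorm{u-u_h}$ via an arbitrary $v_h$, \eqref{eq:Cstar} and \eqref{eq:cea} is fine. It even gives the smaller factor $1+\CD C_{\cea}^{(1)}$ in place of $1+2\CD+\CD C_{\cea}^{(1)}$. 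The result then follows from $\norm{u-u_h}_{\IH}\le\norm{e_h}_{\IH}+\norm{\tilde e}_{\IH}$.
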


\subsection{Helmholtz equation}
Let $\Omega\subset \IR^d$ be a bounded domain with Lipschitz boundary $\partial\Omega$.
We consider the Helmholtz equation
\begin{eqs}\label{eq:helmholtz}
    -\Delta u - \omega^2 u = f \quad \text{in } \Omega,\\
    \grad u \cdot n + i\omega u = g \quad \text{on } \partial\Omega, 
\end{eqs}
where $\omega>0$ is the wave number, $f$ is the source term and $g$ is the boundary data.
Variational formulation of the Helmholtz problem reads: Find $u\in H^1(\Omega)$ such that
\begin{eqs}\label{eq:helmholtzvar}
    \inner{\grad u, \grad v}_\Omega - \omega^2 \inner{u, v}_\Omega + i\omega\inner{u,v}_{\partial\Omega}
    =&\ \inner{f, v}_\Omega + \inner{g, v}_{\partial\Omega}\quad \forall v\in H^1(\Omega),
\end{eqs}
Let $\dom$ be a bounded Lipschitz domain. Then there is a constant $C(\dom,\omega)>0$ such that for all $f\in H^1(\dom)'$, $g\in H^{-1/2}(\partial\dom)$, then the variational problem \eqref{eq:helmholtzvar} has a unique solution $u\in H^1(\dom)$.
The norm we use for the analysis is given by
\[ \norm{u}_V^2 = \norm{\grad u}^2_\Omega + \omega^2\norm{u}^2_\Omega. \]

To discuss the non-conforming discretisation of the Helmholtz equation, we introduce the following standard DG notation. 
Let $\mathcal T_h$ be a partition of the bounded Lipschitz domain $\Omega\subset\mathbb{R}^d$ into non-overlapping Lipschitz elements $K$ with $\overline\Omega=\bigcup_{K\in\mathcal T_h}\overline K$.
Let $\Fh$ be the set of all faces in the mesh $\Th$, and let $\Fhi$ and $\Fhb$ be the sets of interior and boundary faces, respectively. Assume that $\Fh$ has finite $(d-1)$-dimensional Hausdorff measure.

For $F\in\Fhi$ shared by elements $K^+$ and $K^-$ with outward unit normal vector $n$ pointing from $K^+$ to $K^-$. 
We define the average and jump operators as
\begin{equation*}
    \avg{v} = \frac{1}{2}(v^+ + v^-), \quad \jmp{v} = v^+ - v^-,
\end{equation*}
where $v^\pm = v|_{K^\pm}$.

A consistent formulation of \eqref{eq:helmholtzvar} on the broken space $\Vsh(\Th) := \{v\in L^2(\Omega) : v|_K \in H^1(K), \nabla v|_{\partial K}\cdot n \in L^2(\partial K),\ K\in\Th\}$ is given by  the bilinear form 
\begin{eqs}
    a_h(u,v) =&\ \inner{\grad u, \grad v}_\Th - \omega^2 \inner{u, v}_\Th + i\omega\inner{u,v}_\Fhb\\
              &\ - \inner{\avg{\grad u \cdot n},\jmp{v}}_\Fhi - \inner{\jmp{u},\avg{\grad v \cdot n}}_\Fhi + \frac{\alpha p^2}{h}\inner{\jmp{u},\jmp{v}}_\Fhi
\end{eqs}
and right hand side 
\begin{equation}
    \ell_h(v) = \inner{f, v}_\Th + \inner{g, v}_{\Fhb},
\end{equation}
where we assume that $f\in L^2(\Omega)$ and $g\in L^2(\partial\Omega)$.
Here and in the following we use the shorthand notation
\[
    \inner{u,v}_\Th := \sum_{K\in\Th} \inner{u,v}_K, \quad \norm{u}^2_\Th := \sum_{K\in\Th} \norm{u}^2_K,
\]
where $\inner{u,v}_K$ and $\norm{u}_K$ denote the standard $L^2(K)$ inner product and norm, respectively.

On $\Vsh(\Th)$, the broken variants of the norm $\norm{\cdot}_V$ are given by
\begin{align*}
    \dgonorm{u}^2 & :=  \norm{\grad u}^2_\Th + \omega^2\norm{u_h}^2_\Th +\frac{p^2}{h}\norm{\jmp{u_h}}^2_\Fhi+\omega\norm{u_h}^2_\Fhb , \\  
    \tdgonorm{u}^2 &  := \dgonorm{u}^2 + \sum_{K\in\Th}p^{-2}h\norm{\grad u \cdot n}^2_{\partial K}.
\end{align*}

\subsection{Assumptions and inequalities}
We recall some standard inequalities and assumptions on the mesh $\Th$.
These will be used in the analysis, see e.g. \cite{CDG22,CDGH17}. %

We will work under the main assumption: 
\begin{itemize}
  \item[(A0)] the mesh $\Th$ is sufficiently fine with respect to the wavenumber $\omega$ in the sense that
\begin{equation}\label{ass:oh}
    (1+\omega^2)h \leq C_\Omega
\end{equation}
\end{itemize}
where $C_\Omega$ only depends on the domain.
Here $h = \sup_{K\in\Th} h_K$ with $h_K = \diam K$.

We assume the following uniform shape-regularity properties (with constants independent of $h,p$ and $\omega$):
\begin{itemize}
  \item[(A1)] For each $K\in\mathcal T_h$ there exist balls $b_K\subset K\subset B_K$ (concentric or not) such that
    \[
      \frac{\operatorname{diam}(B_K)}{\operatorname{diam}(b_K)}\leq \gamma_\star,
    \]
    for a uniform $\gamma_\star>0$. Moreover, $K$ is star-shaped with respect to the ball $b_K$.
  \item[(A2)]
    The boundary $\partial K$ can be partitioned into mutually disjoint subsets $\{F_i\}_{i=1}^{n_K}$
    with $n_K\leq n_{\max}$ uniformly and $\operatorname{diam}(K)\leq C_{\mathrm{bdry}}\operatorname{diam}(F_i)$ for each $i$.
    For every facet piece $F_i$ there exists a sub-element $K_{F_i}\subset K$ which is star-shaped
    with respect to a point $x_i^0\in K$ and whose local diameter satisfies $h_{K_{F_i}}\simeq h_K$ uniformly.
    Moreover there exists a uniform constant $c_{\mathrm{out}}>0$ such that
    \[
      (x-x_i^0)\cdot n_{F_i}(x)\ge c_{\mathrm{out}}\,h_K\qquad\forall x\in F_i,
    \]
    where $n_{F_i}$ denotes the outward unit normal on $F_i$.
  \item[(A3)] Each $\partial K$ is the union of a finite number of closed $C^1$ surfaces.
\end{itemize}

All constants $C$ below depend only on
$\gamma_\star,\ c_{\mathrm{out}},\ C_{\mathrm{bdry}},\ n_{\max},\ d$
and are independent of $h$ and $p$.
We will assume that $\Th$ satisfies \textup{(A1)}--\textup{(A3)}. 

\begin{lemma}[Scaled Poincar\'e--Friedrichs]\label{lem:pf}
Let $K\in\Th$. Then there exists a constant $\const{pf}>0$,
depending only on the shape-regularity parameters and $d$, such that for all $v\in H^1(K)$,
\begin{equation}\tag{pf}\label{pf}
\|v\|_{K}^2
\le \const{pf}\Big(h_K^2\|\nabla v\|_{K}^2 + h_K\|v\|_{\partial K}^2\Big).
\end{equation}
\end{lemma}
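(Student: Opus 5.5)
We prove the scaled Poincar\'e--Friedrichs inequality \eqref{pf} by a Rellich-type (divergence theorem) identity on the star-shaped sub-elements provided by (A2). This avoids compactness arguments and gives a constant depending only on the shape-regularity parameters. By density of $C^1(\overline K)$ in $H^1(K)$ for Lipschitz $K$, and continuity of the trace $H^1(K)\to L^2(\partial K)$, it suffices to take $v$ smooth.

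\textbf{Step 1: identity on a cone-like sub-element.} Fix a facet piece $F_i$ with its sub-element $K_{F_i}$, star-shaped with respect to $x_i^0$. Set $\mu(x):=x-x_i^0$, so that $\nabla\cdot\mu=d$ and $|\mu|\le h_K$ on $K$. Apply the divergence theorem on $K_{F_i}$ to the field $|v|^2\mu$:
\[
d\,\|v\|_{K_{F_i}}^2 + \int_{K_{F_i}} \mu\cdot\nabla|v|^2 = \int_{\partial K_{F_i}} |v|^2\,\mu\cdot n .
\]
Star-shapedness gives $\mu\cdot n\ge 0$ on all of $\partial K_{F_i}$. We use this only to bound the boundary term, and for that we need to know how $\partial K_{F_i}$ relates to $\partial K$.

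\textbf{Step 2: the geometric point, which is the main obstacle.} Choose $K_{F_i}$ as the cone $\{x_i^0+t(y-x_i^0): y\in F_i,\ t\in[0,1]\}$. Its lateral boundary consists of rays through $x_i^0$, so $\mu\cdot n=0$ there, and the only remaining boundary part is $F_i\subset\partial K$. Hence
\[
\int_{\partial K_{F_i}}|v|^2\mu\cdot n \le h_K\|v\|_{F_i}^2 .
\]
For the volume term, Cauchy--Schwarz and Young's inequality give
\[
\Big|\int\mu\cdot\nabla|v|^2\Big|\le 2h_K\|v\|\,\|\nabla v\| \le \tfrac d2\|v\|^2 + \tfrac{2}{d}h_K^2\|\nabla v\|^2 ,
\]
with norms taken on $K_{F_i}$. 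Absorbing the first term yields
\[
\|v\|_{K_{F_i}}^2\le C\big(h_K^2\|\nabla v\|_{K}^2+h_K\|v\|_{F_i}^2\big).
\]
If the sub-elements in (A2) are not exactly such cones, I would first try to show they can be replaced by these cones. The assumption $(x-x_i^0)\cdot n_{F_i}\ge c_{\mathrm{out}}h_K$ together with $h_{K_{F_i}}\simeq h_K$ is exactly what keeps the cones nondegenerate. In this construction $c_{\mathrm{out}}$ is not needed for the inequality itself; it is used only in the companion trace inequality.

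\textbf{Step 3: covering.} Since $x_i^0\in K$ and $K$ is star-shaped with respect to a ball, every point of $K$ lies on a segment from some point of $\partial K$ toward $x_i^0$. I would like to conclude that the cones over the pieces $F_i$ cover $K$, but this holds only for a common apex. So I take a single apex, namely the centre of $b_K$ from (A1). The cones over the $F_i$ from this apex tile $K$ exactly, up to null sets. Summing the bounds of Step 2 over $i\le n_{\max}$ then gives \eqref{pf}, with $\const{pf}$ depending only on $d$.

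For completeness I would also record the fallback argument. Using the ball $b_K$ of radius comparable to $h_K$, one has $\|v\|_K^2\le C(\|v-\bar v\|_K^2+|K|\,|\bar v|^2)$, where $\bar v$ is the mean of $v$. A scaled star-shaped Poincar\'e inequality controls $\|v-\bar v\|_K$ by $h_K\|\nabla v\|_K$. The mean itself is controlled through the boundary mean plus a gradient term, and $|\partial K|\gtrsim h_K^{d-1}$ by (A2) bounds the boundary mean by $h_K\|v\|_{\partial K}^2$.
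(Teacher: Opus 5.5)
The paper gives no proof of this lemma. It is recalled as a standard inequality with a pointer to \cite{CDG22,CDGH17}, so your Rellich-identity argument is a genuinely different, self-contained route. It is correct in its final form, namely Step~3 with the common apex at the centre of $b_K$. You were right that cones with the different apexes $x_i^0$ from \textup{(A2)} need not cover $K$.

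You can, however, drop the cones altogether. Summing your identities over a tiling with a common apex is just the divergence identity for $|v|^2(x-x_0)$ on $K$ itself, because the lateral faces contribute nothing. The sign of $\mu\cdot n$ is also never needed: for any $x_0\in\overline K$ one has $\mu\cdot n\le|\mu|\le h_K$ on $\partial K$. This gives
\[
d\,\|v\|_K^2\le h_K\|v\|_{\partial K}^2+2h_K\|v\|_K\|\nabla v\|_K
\quad\Longrightarrow\quad
\|v\|_K^2\le \tfrac{2}{d}\,h_K\|v\|_{\partial K}^2+\tfrac{4}{d^2}\,h_K^2\|\nabla v\|_K^2
\]
on every bounded Lipschitz $K$. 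That is \eqref{pf} with $\const{pf}=2/d$, using none of \textup{(A1)}--\textup{(A3)}. This is slightly stronger than the lemma claims, and it avoids the question whether each cone is a Lipschitz domain. If you keep the cone version, write Step~2 with $\|\nabla v\|_{K_{F_i}}$ on the right, not $\|\nabla v\|_K$. Otherwise summing costs a factor $n_{\max}$ and the constant is no longer $d$-only.

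Your fallback via the element mean is the conventional argument and is also fine. It does consume shape regularity: $|K|\lesssim h_K^d$, $|\partial K|\gtrsim h_K^{d-1}$, a trace inequality and a star-shaped Poincar\'e inequality. The paper's citation route keeps this inequality uniform with the other local inequalities, all stated under \textup{(A1)}--\textup{(A3)}. The Rellich route instead gives an explicit, shape-independent constant.
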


\begin{lemma}[Polynomial inverse trace]\label{lem:itr}
Let $K\in\Th$ and $u\in\IP^p(K)$. Then there exists $\const{itr}>0$ such that
\begin{equation}\tag{itr}\label{itr}
\|u\|_{\partial K} \leq \const{itr}\,p\,h_K^{-1/2}\,\|u\|_{K}.
\end{equation}
\end{lemma}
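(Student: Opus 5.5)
The plan is to reduce to a reference configuration, prove the estimate there by compactness and equivalence of norms on the finite-dimensional space $\IP^p$, and then track the $p$-dependence through a one-dimensional inverse inequality along rays from the star-centres provided by (A2).

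\emph{Step 1: reduction to facet pieces.} By (A2), $\partial K=\bigcup_{i=1}^{n_K} F_i$ with $n_K\le n_{\max}$. It therefore suffices to prove
\[
\|u\|_{F_i}^2\le C p^2 h_K^{-1}\|u\|_{K_{F_i}}^2
\]
for each $i$ and then sum over $i$, using $K_{F_i}\subset K$. The summation only costs a factor $n_{\max}$.

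\emph{Step 2: the estimate on one piece.} Fix $i$ and write $x^0=x_i^0$. Every $x\in F_i$ is parametrised as $x^0+t(x-x^0)$, $t\in[0,1]$. Since $K_{F_i}$ is star-shaped with respect to $x^0$, I will read (A2) as saying that the cone $\{x^0+t(x-x^0):x\in F_i,\,t\in[0,1]\}$ is contained in $K_{F_i}$; this needs a short justification or a mild strengthening of the assumption. The coarea/polar-type change of variables gives
\[
\int_{K_{F_i}} w \;\ge\; \int_{F_i}\int_0^1 w\bigl(x^0+t(x-x^0)\bigr)\, t^{d-1}\,(x-x^0)\cdot n_{F_i}(x)\,dt\,ds(x).
\]
Condition (A3) guarantees that the surface measure and normals are well defined. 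Combined with $(x-x^0)\cdot n_{F_i}\ge c_{\mathrm{out}}h_K$, this yields
\[
\|u\|^2_{K_{F_i}}\ge c_{\mathrm{out}}h_K\int_{F_i}\int_0^1 |u(x^0+t(x-x^0))|^2t^{d-1}\,dt\,ds.
\]

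\emph{Step 3: the one-dimensional bound.} For fixed $x\in F_i$, the function $q(t)=u(x^0+t(x-x^0))$ is a univariate polynomial of degree at most $p$. I then need the weighted inverse inequality
\[
|q(1)|^2\le C p^2\int_0^1 |q(t)|^2 t^{d-1}\,dt,
\]
with $C=C(d)$ independent of $p$.

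To prove it, restrict to $t\in[1/2,1]$, where $t^{d-1}\ge 2^{1-d}$. On this interval the classical endpoint estimate applies: Legendre polynomials give $|q(1)|^2\le \frac{(p+1)^2}{b-a}\|q\|_{L^2(a,b)}^2$. On $[1/2,1]$ this reads $|q(1)|^2\le 2(p+1)^2\|q\|^2_{L^2(1/2,1)}$. Combining Steps 2 and 3 yields $\|u\|_{F_i}^2\le C(d)\,c_{\mathrm{out}}^{-1} p^2 h_K^{-1}\|u\|^2_{K}$. Summing over $i$ as in Step 1 and taking square roots gives (itr) with $\const{itr}$ depending only on $c_{\mathrm{out}}$, $n_{\max}$ and $d$. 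The case $p=0$ is absorbed by writing $p+1\le 2p$ for $p\ge1$; for $p=0$ one uses $\max(p,1)$ in place of $p$.

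\emph{Main obstacle.} The delicate point is Step 2: justifying the change of variables over the cone spanned by $F_i$ and $x^0$ for merely piecewise $C^1$ curved facets, and ensuring that this cone lies inside $K$. I would first attempt this using star-shapedness together with the positivity of $(x-x^0)\cdot n_{F_i}$. That positivity makes the radial projection onto $F_i$ injective and ensures each ray hits $F_i$ only once. Everything else is the standard one-dimensional Legendre argument.
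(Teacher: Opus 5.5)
Your proof is correct, and it is more than the paper provides: the paper does not prove this lemma at all. It lists it among ``standard inequalities'' with a pointer to \cite{CDG22,CDGH17}, so the comparison is with the route in that literature.

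\textbf{Comparison with the cited route.} For polytopic elements, the usual derivation places a simplex $K_F\subset K$ over each facet $F$ and invokes the Warburton--Hesthaven bound $\|v\|_F^2\le\frac{(p+1)(p+d)}{d}\frac{|F|}{|K_F|}\|v\|_{K_F}^2$. The star-shapedness condition with $c_{\mathrm{out}}$ in (A2), taken from those works, replaces these simplices when facets are curved. Your radial decomposition is the natural mechanism that condition supports.

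\textbf{What your route buys.} It is self-contained, valid for curved pieces, and gives an explicit constant $\const{itr}^2\le 2^{d+2}n_{\max}/c_{\mathrm{out}}$ for $p\ge 1$. This constant depends on neither $C_{\mathrm{bdry}}$ nor the comparability $h_{K_{F_i}}\simeq h_K$.

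\textbf{What it gives up.} Discarding $[0,\tfrac12]$ costs a factor of order $2^d$. The Christoffel function of the Jacobi weight $t^{d-1}$ at $t=1$ gives the sharp bound $|q(1)|^2\le (p+1)(p+d)\int_0^1|q(t)|^2t^{d-1}\,dt$. Inserted into your Step 2, this reproduces the simplex constant exactly: for a simplex, $(x-x^0)\cdot n$ equals the height $H$ and $|K|=|F|H/d$. Since \cref{lem:Tcoercive} requires $\alpha>\const{itr}^2$, the sharper constant directly lowers the admissible penalty.

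\textbf{The two issues you flagged.} Both are genuine and are resolved as you suggest.
\begin{enumerate}[label=(\roman*)]
\item The estimate is false for $p=0$ (take $u\equiv 1$), so replacing $p$ by $\max(p,1)$ is necessary.
\item (A2) must be read with $F_i\subset\overline{K_{F_i}}$. Granted that, star-shapedness gives $[x^0,x]\subset\overline{K_{F_i}}\subset\overline K$ for every $x\in F_i$. Injectivity of $(x,t)\mapsto x^0+t(x-x^0)$ then follows. Suppose a ray met $F_i$ at a $C^1$ point $x$ and again at a farther point $y$. Then $[x^0,y]\subset\overline K$, yet the ray leaves $\overline K$ immediately beyond $x$ because $(x-x^0)\cdot n_{F_i}(x)>0$, a contradiction. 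The only exceptional rays pass through the $\mathcal{H}^{d-1}$-null set of non-$C^1$ points, and their directions form a null set. Hence the area formula yields your Step 2 inequality.
\end{enumerate}
As you note, positivity of $(x-x^0)\cdot n_{F_i}$ alone would not suffice; the argument needs it together with star-shapedness.
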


\begin{lemma}[Polynomial inverse inequality]\label{lem:inv}
Let $K\in\Th$ and $u\in\IP^p(K)$. Then there exists $\const{inv}>0$ such that
\begin{equation}\tag{inv}\label{inv}
  \|\nabla u\|_{K} \leq \const{inv}\,p^2\,h_K^{-1}\,\|u\|_{K}.
\end{equation}
\end{lemma}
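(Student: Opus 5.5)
We prove the polynomial inverse inequality \eqref{inv} by reducing to a reference configuration of unit size, with a constant controlled only by the shape-regularity parameters in (A1)--(A3).

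\textbf{Step 1: rescaling.} Let $b_K$ be the ball from (A1), of radius $\rho_K \simeq h_K/\gamma_\star$, and consider the affine map $x\mapsto (x-x_K)/h_K$. Under it, $K$ becomes a domain $\hat K$ with $\diam \hat K=1$. The domain $\hat K$ is star-shaped with respect to a ball of radius at least $c(\gamma_\star)>0$. Both $\|\nabla u\|_K$ and $h_K^{-1}\|u\|_K$ transform with the same factor $h_K^{d/2-1}$, so it suffices to prove
\[
\|\nabla \hat u\|_{\hat K}\le C p^2\|\hat u\|_{\hat K}
\]
for all $\hat u\in\IP^p(\hat K)$, uniformly over the class $\mathcal K$ of unit-diameter domains that are star-shaped with respect to a ball of radius at least $c$.

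\textbf{Step 2: the case of a ball.} On a ball $B$, or on a cube, the estimate $\|\nabla q\|_{B}\le C p^2\|q\|_B$ is classical. One proves it by tensorizing the one-dimensional Markov-type inequality $\|q'\|_{L^2(-1,1)}\le C p^2\|q\|_{L^2(-1,1)}$. The one-dimensional inequality in turn follows from the Legendre expansion: writing $q=\sum_k a_k L_k$, one has $q'=\sum_k a_k L_k'$, and the norms $\|L_k'\|$ grow like $k^{3/2}$, which after Cauchy--Schwarz yields the factor $p^2$.

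\textbf{Step 3: transfer to general $\hat K\in\mathcal K$.} This is the main obstacle, because $\hat K$ is not a fixed reference shape. My first attempt is a covering argument.

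\begin{enumerate}
\item Every point $x\in\hat K$ lies in the convex hull $\mathrm{co}(x,\hat b)$, which is a cone contained in $\hat K$ by star-shapedness.
\item Each such cone contains a cube or simplex $S_x$ of size comparable to $c$ that touches $x$.
\item Cover $\hat K$ by finitely many such shapes $S_j$, with the number $N$ bounded in terms of $c$ and $d$ only; this is possible by compactness in the Hausdorff metric, or directly by a Vitali-type argument.
\item On each $S_j$, apply Step 2 to the simplex or cube, obtaining $\|\nabla \hat u\|_{S_j}\le Cp^2\|\hat u\|_{S_j}$.
\item Sum over $j$. This gives $\|\nabla\hat u\|_{\hat K}^2\le N C^2p^4\|\hat u\|_{\hat K}^2$.
\end{enumerate}

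The inverse inequality on simplices with the $p^2$ scaling is standard, via Duffy-type Jacobi-weighted expansions.

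If the covering step proves awkward near thin cusps, the fallback is a different argument. First establish $\|\hat u\|_{\hat K}\le C^{p}\|\hat u\|_{\hat b}$, which fails to give $p^2$. So instead one would use the star-shaped cone decomposition directly: apply a one-dimensional Markov inequality along rays from $\hat b$ with Jacobian weight $r^{d-1}$, using Jacobi-weighted Markov bounds, which still scale like $p^2$, and control the angular derivatives by the ball estimate. Undoing the scaling then yields $\const{inv}$, depending only on $\gamma_\star$ and $d$.
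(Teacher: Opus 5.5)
The paper does not prove this lemma. It recalls \eqref{inv} as a standard inequality and points to \cite{CDG22,CDGH17}, so your argument has to stand on its own. Steps 1 and 2 are fine: the scaling reduction to a unit-diameter $\hat K$ star-shaped with respect to a ball of radius $r\ge(2\gamma_\star)^{-1}$, and the one-dimensional $p^2$ Markov inequality. Note, though, that tensorization gives the cube, not the ball.

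\textbf{The gap is item 3 of Step 3.} Your summation needs two things. You need $S_j\subset\hat K$ to get $\sum_j\|\hat u\|_{S_j}^2\le N\|\hat u\|_{\hat K}^2$. You also need $\hat K\subset\bigcup_j S_j$ up to a null set to get $\|\nabla\hat u\|_{\hat K}^2\le\sum_j\|\nabla\hat u\|_{S_j}^2$. Together, $\hat K$ must coincide with a finite union of simplices or cubes.

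This already fails for the unit ball. The ball lies in your class $\mathcal K$ and is admissible under (A1)--(A3), which allow curved faces. For it, $\bigcup_j S_j$ is a closed polytope strictly inside $\overline{\hat K}$, so it misses an open set.

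Even for polygons, $N$ cannot be bounded in terms of $c$ and $d$. Take $\rho(\theta)=1+\epsilon\sin(\theta/\epsilon)$, or a polygonal version of it. This domain stays star-shaped with respect to $B(0,1/2)$ as $\epsilon\to0$. However, a point at an outward tip can belong to a convex subset of $\hat K$ only as an extreme point. So each simplex (resp.\ cube) covers at most $d+1$ (resp.\ $2^d$) of the roughly $\epsilon^{-1}$ tips.

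The compactness/Vitali justification does not rescue this. Since $x$ lies on $\partial S_x$, the family $\{S_x\}$ is not an open cover, and covering numbers are not stable under Hausdorff limits. The obstruction is curvature and oscillation of $\partial\hat K$, not cusps; star-shapedness with respect to a ball already excludes cusps.

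\textbf{The fallback.} It is the right idea, but the step ``control the angular derivatives by the ball estimate'' does not work as stated. At points at distance $O(1)$ from $\hat b$, angular derivatives can only be tied to data on $\hat b$ with the exponential loss $C^p$ that you yourself mention. Slicing at fixed radius instead leads to Markov inequalities on irregular, possibly disconnected, subsets of the sphere.

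What closes the argument is to use only radial derivatives, but taken from several centres. This is exactly what star-shapedness with respect to the whole ball permits. Write $\hat b=B(y_0,r)$ and set $y_k:=y_0+\tfrac r2 e_k$ for $k=1,\dots,d$. Then $\hat K$ is star-shaped with respect to each $y_k$, and every ray from $y_k$ has length $R_k(\sigma)\ge r/2$ inside $\hat K$. The weighted (Jacobi) Markov inequality is
\[
\int_0^R |q'(s)|^2 s^{d-1}\,ds\le C_d\,p^4R^{-2}\int_0^R |q(s)|^2 s^{d-1}\,ds,\qquad q\in\IP^p .
\]
Apply it in polar coordinates about $y_k$, and use $|x-y_k|\le1$. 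This gives
\[
\|(x-y_k)\cdot\nabla\hat u\|_{\hat K}\le 2C_d^{1/2}r^{-1}p^2\|\hat u\|_{\hat K},\qquad k=0,\dots,d .
\]
The identity $\tfrac r2\,\partial_k\hat u=(x-y_0)\cdot\nabla\hat u-(x-y_k)\cdot\nabla\hat u$ then yields $\|\nabla\hat u\|_{\hat K}\le C(d)\,r^{-2}p^2\|\hat u\|_{\hat K}$. This is \eqref{inv} with a constant depending only on $\gamma_\star$ and $d$, and it needs no covering at all.
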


\begin{lemma}[Continuous trace]\label{lem:tr}
Let $K\in\Th$ and let $F\subset\partial K$ be a (closed) facet piece. Then there exists $\const{tr}>0$ such that
for all $v\in H^1(K)$
\begin{equation}\tag{tr}\label{tr}
  \|v\|_{F}^2 \leq \const{tr}
  \Big( h_F^{-1}\|v\|_{K} + \|\nabla v\|_{K}\Big)\|v\|_{K}.
\end{equation}
\end{lemma}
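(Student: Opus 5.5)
The claim is the continuous trace inequality \eqref{tr}: for $v\in H^1(K)$ and a facet piece $F\subset\partial K$,
\[
\|v\|_F^2\le \const{tr}\big(h_F^{-1}\|v\|_K+\|\nabla v\|_K\big)\|v\|_K .
\]
I will prove it by the classical divergence (Rellich-type) argument, using the geometric structure from (A2). By density of $C^1(\overline{K})$ in $H^1(K)$ for Lipschitz $K$, and continuity of the trace, it suffices to take $v$ smooth. I will also take $v$ real, or apply the argument to $|v|^2$, which is fine since $\nabla|v|^2=2\Re(\bar v\nabla v)$.

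\textbf{Vector field.} Let $F_i$ be the partition piece of (A2) containing (or equal to) $F$, with sub-element $K_{F_i}\subset K$ star-shaped with respect to $x_i^0$. Set $m(x):=x-x_i^0$ on $K_{F_i}$. Then $\nabla\cdot m=d$ and $|m|\le h_{K_{F_i}}\lesssim h_K$. By the outward condition, $m\cdot n\ge c_{\mathrm{out}}h_K$ on $F_i$. On the remaining part of $\partial K_{F_i}$, star-shapedness gives $m\cdot n\ge 0$.

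\textbf{Divergence identity.} Apply the divergence theorem on $K_{F_i}$ to $|v|^2m$:
\[
c_{\mathrm{out}}h_K\|v\|_{F_i}^2\le\int_{\partial K_{F_i}}|v|^2\,m\cdot n
=\int_{K_{F_i}} d\,|v|^2+2\Re(\bar v\,\nabla v\cdot m).
\]
The right-hand side is bounded by $d\|v\|_K^2+2Ch_K\|v\|_K\|\nabla v\|_K$. Dividing by $c_{\mathrm{out}}h_K$ gives
\[
\|v\|_{F_i}^2\le C\big(h_K^{-1}\|v\|_K+\|\nabla v\|_K\big)\|v\|_K .
\]

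\textbf{Conclusion.} Since $F\subset F_i$, the same bound holds for $\|v\|_F^2$. Finally $h_F\le h_K$, and $h_K\le C_{\mathrm{bdry}}h_{F_i}$ from (A2), which converts $h_K^{-1}$ into $h_F^{-1}$ up to constants. If $F$ is a union of several pieces, I sum over at most $n_{\max}$ of them.

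\textbf{Main obstacle.} The delicate step is justifying the sign $m\cdot n\ge 0$ on $\partial K_{F_i}\setminus F_i$, which requires the star-shapedness of $K_{F_i}$ with respect to $x_i^0$. A second point is that $F_i$ must actually lie on $\partial K_{F_i}$; I read (A2) as intending this. If instead $K_{F_i}$ merely touches $F_i$, I would fall back on a cutoff $\chi m$, with $\chi$ a smooth function equal to $1$ near $F_i$, supported in the cone over $F_i$, and with $|\nabla\chi|\lesssim h_K^{-1}$. This gives the same bound with a harmless extra constant.
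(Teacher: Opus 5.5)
Your proof is correct. The divergence identity for $|v|^2(x-x_i^0)$ on $K_{F_i}$, using $(x-x_i^0)\cdot n\ge c_{\mathrm{out}}h_K$ on $F_i$ and $\ge 0$ on the rest of $\partial K_{F_i}$, is the standard argument that hypothesis (A2) is built for; the paper states this lemma without proof and only points to \cite{CDG22,CDGH17}. The appeal to $C_{\mathrm{bdry}}$ is unnecessary: $h_F\le h_K$ already gives $h_K^{-1}\le h_F^{-1}$ in the needed direction, and $\|v\|_F^2\le\sum_i\|v\|_{F_i}^2$ handles an arbitrary $F\subset\partial K$ at the cost of a factor $n_{\max}$.
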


\begin{lemma}[Polynomial norming-set inequality on an inscribed ball]\label{lem:norming}
Assume \textup{(A1)} and let $b_K\subset K$ be the ball from \textup{(A1)} with radius $r_K$.
Set
\[
\vartheta_K := \frac{r_K}{h_K}\in(0,1),
\qquad \vartheta := \inf_{K\in\Th}\vartheta_K>0.
\]
Then there exists a constant $\const{rem}\ge 1$, depending only on $\gamma_\star$ and $d$, such that
for all $p\ge 1$ and all $v\in \IP^p(K)$,
\begin{equation}\tag{rem}\label{rem}
\|v\|_{K} \le \const{rem}\,\vartheta^{-p}\,\|v\|_{b_K},
\qquad
\|\nabla v\|_{K} \le \const{rem}\,\vartheta^{-p}\,\|\nabla v\|_{b_K}.
\end{equation}
\end{lemma}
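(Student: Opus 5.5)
The plan is to reduce the estimate to a one-dimensional growth bound for polynomials along rays from the centre of the inscribed ball, integrate in polar coordinates, and then apply the same argument to $\nabla v$. One caveat up front: the one-dimensional bounds I know carry a base $2\vartheta^{-1}$ per degree rather than $\vartheta^{-1}$, so as planned the argument gives the estimate with $\vartheta^{-p}$ replaced by $(2\vartheta^{-1})^p$. Removing that factor $2^p$ is the real difficulty, discussed in the last paragraph.

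\textbf{Setup.} Let $x_0$ be the centre and $r_K=\vartheta_K h_K$ the radius of $b_K$ from (A1). Since $x_0\in K$ and $\operatorname{diam}K=h_K$, we have $K\subset B(x_0,h_K)$, hence
\[
\|v\|_K\le\|v\|_{B(x_0,h_K)} .
\]
So it suffices to compare $L^2$ norms of $v\in\IP^p$ on the concentric balls $B(x_0,h_K)$ and $B(x_0,r_K)$, with ratio $t:=h_K/r_K=\vartheta_K^{-1}\le\vartheta^{-1}$. After translating and dilating by $r_K$, the inequality becomes a statement about a polynomial $w$ of degree $\le p$ on the unit ball $B_1$ versus on $B_t$. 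The Jacobians of this change of variables cancel, up to a factor $t^{d/2}$ that is independent of $p$.

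\textbf{Steps.} I would carry out three steps.
\begin{enumerate}
\item Fix a direction $\theta\in S^{d-1}$ and set $q(s)=w(s\theta)$ for $s\in[-t,t]$, a univariate polynomial of degree $\le p$. Use a one-dimensional $L^2$ growth inequality comparing $\int_{-t}^{t}|q|^2|s|^{d-1}\,ds$ with $\int_{-1}^{1}|q|^2|s|^{d-1}\,ds$. Expand $q$ in the orthonormal (Jacobi/Gegenbauer) polynomials for the weight $|s|^{d-1}$ on $[-1,1]$, and bound the growth of each basis function on $[-t,t]$ through the Chebyshev extremal property, i.e.\ the Bernstein--Walsh lemma.
\item Integrate over $\theta$. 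Using polar coordinates centred at $x_0$, namely $\int_{B_R}|w|^2=\tfrac12\int_{S^{d-1}}\int_{-R}^{R}|q|^2|s|^{d-1}\,ds\,d\theta$, this gives the first inequality in \eqref{rem}.
\item Apply the same argument componentwise to $\nabla v\in(\IP^{p-1})^d$. This gives the gradient bound with the even better exponent $p-1$, which is then dominated by $\vartheta^{-p}$.
\end{enumerate}

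\textbf{Main obstacle: the base of the exponential.} The classical Chebyshev growth bound is $|q(s)|\le T_p(|s|)\max_{[-1,1]}|q|$, with $T_p(t)\le(t+\sqrt{t^2-1})^p\le(2t)^p$, and it is sharp for sup-norms. This is exactly where the factor $2^p$ enters: it yields $(2\vartheta^{-1})^p$, not $\vartheta^{-p}$. To obtain a $p$-independent $\const{rem}$ with base exactly $\vartheta^{-1}$, I would first try to exploit the $L^2$ structure instead of sup-norms. Decompose $w=\sum_{k\le p}w_k$ into homogeneous parts, for which $\|w_k\|_{B_t}=t^{k+d/2}\|w_k\|_{B_1}$ exactly, and then attempt to control the cross terms between different $w_k$ on the sphere via the harmonic (Fischer) decomposition $w_k=\sum_j|x|^{2j}Y_{k-2j}$. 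The spherical harmonics $Y_m$ are mutually orthogonal on every sphere, so the only non-orthogonality that remains is within each fixed $m$, namely between $|x|^{2j}Y_m$ for different $j$. This reduces everything to a univariate problem in $\rho^2$ for each $m$.

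I expect precisely this residual univariate problem to be the crux. A one-dimensional polynomial in $\rho$ on $[0,1]$ extrapolated to $[0,t]$ generically still grows like $(\sqrt{t}+\sqrt{t-1})^{2p}$. If that cannot be avoided, the honest outcome of the plan is the base $2\vartheta^{-1}$, to be absorbed by redefining $\vartheta$ as $\vartheta/2$ in the statement. I would check this point first, before polishing the remaining steps, since the exponent base is all that the later $hp$ analysis uses.
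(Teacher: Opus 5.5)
\textbf{The gap you flag is real, and it cannot be closed: the lemma is false as stated.} The paper gives no proof of it; it is only listed among the recalled standard inequalities. Base exactly $\vartheta^{-1}$ together with a $p$-independent $\const{rem}$ already fails for an equilateral triangle.

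Let $K$ have side $a$ and let $b_K$ be its incircle, with centre $x_0$ and radius $r=a/(2\sqrt3)$. Then $h_K=a$ and $\vartheta^{-1}=2\sqrt3\approx 3.46$, while the vertices lie at distance $2r$ from $x_0$. Let $e$ be the unit vector from $x_0$ to a vertex, put $s(x):=(x-x_0)\cdot e/r$, and set $v:=T_p\circ s\in\IP^p(K)$.
\begin{itemize}
\item On $b_K$ we have $|s|\le 1$, so $\|v\|_{b_K}\le\sqrt\pi\,r$.
\item The corner $\{x\in K:\ s(x)\ge 2-1/p\}$ is a triangle of area $r^2/(\sqrt3\,p^2)$. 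On it $v\ge T_p(2-1/p)\ge (2e)^{-1}(2+\sqrt3)^p$ for $p\ge 2$.
\end{itemize}
Hence
\[
\frac{\|v\|_K}{\|v\|_{b_K}}\ \ge\ \frac{c}{p}\,(2+\sqrt3)^p,
\qquad
\frac{2+\sqrt3}{2\sqrt3}>1,
\]
so $\const{rem}\ge c\,p^{-1}\bigl((2+\sqrt3)/(2\sqrt3)\bigr)^p\to\infty$. The gradient inequality fails in the same way: take $v(x)=r\int_0^{s(x)}T_{p-1}(\sigma)\,d\sigma$, so that $\nabla v=T_{p-1}(s)\,e$.

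In general, $K\subset B(x_0,h_K-r_K)$. The true exponential rate is the Chebyshev growth $t+\sqrt{t^2-1}$, with $t=\max_{x\in K}|x-x_0|/r_K\le\vartheta_K^{-1}-1$. This is always $<2\vartheta^{-1}$, but it can exceed $\vartheta^{-1}$, as the triangle shows. The obstruction is not an artifact of sup-norm arguments, since the $L^2$ mass near one vertex alone forces the Chebyshev rate. So drop the Fischer/spherical-harmonic refinement: no $L^2$ bookkeeping can reach base $\vartheta^{-1}$.

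What your Steps 1--3 do prove is the corrected lemma, with $\vartheta$ replaced by $\vartheta/2$, and that is the right statement. To make it airtight, state explicitly where the constant comes from.
\begin{itemize}
\item Passing between $L^\infty$ and the weighted $L^2$ norm costs a factor polynomial in $p$. This enters through sup-norms of the orthonormal polynomials for $|s|^{d-1}$, or through a Nikolskii inequality.
\item Bernstein--Walsh gives only $T_p(t)\le (t+\sqrt{t^2-1})^p$.
\item That polynomial factor is absorbed into a $p$-independent constant only because of the strict slack $t+\sqrt{t^2-1}<2t$. The slack is uniform because $t\le\vartheta^{-1}\le 2\gamma_\star$, which follows from $h_K\le\operatorname{diam}B_K\le 2\gamma_\star r_K$. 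This is precisely why $\const{rem}$ depends on $\gamma_\star$ and $d$.
\end{itemize}
A shorter route to the same bound: restricting $v$ to lines through $x_0$ gives $\max_K|v|\le T_p(t)\max_{b_K}|v|$ directly. A Nikolskii inequality on the ball $b_K$, together with $|K|\le (h_K/r_K)^d|b_K|$, then finishes the proof.

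Downstream, the correction only changes $\vartheta^{p}$ to $(\vartheta/2)^{p}$ in $c_{A,0}$ of \cref{lem:nonharmonicNE}. The authors already state that this exponential $p$-dependence is not expected to be sharp.
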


\section{Embedded Trefftz DG formulation}\label{sec:etdg}
We discretize \eqref{eq:helmholtzvar} using the global sesquilinear form $a_h(\cdot,\cdot)$ and the right-hand side $\ell_h(\cdot)$ defined above. On each element $K\in\Th$ we use the polynomial space $\IP^p(K)$ of degree at most $p\ge 0$, and we set
\[
V_h := \{v\in L^2(\Omega): v|_K\in \IP^p(K)\ \forall K\in\Th\}.
\]
To formulate the embedded Trefftz method we need, for each $K\in\Th$, a local space $\LKh$, a local test space $\QKh$, a local operator $\AK:V_h\to\QKh'$, and a local right-hand side $\ell_K\in\QKh'$. The (global) complement of the Trefftz space is
$
\ILh := \bigoplus_{K\in\Th}\LKh .
$
Once the operator $\AK$ is fixed, it induces the local weak Trefftz spaces
\[
\ITh(K) := \{v_h\in \IP^p(K): \inner{\AK v_h,q_h}_K = 0 \ \forall q_h\in\QKh\},
\]
and the corresponding global space
$
\ITh := \bigoplus_{K\in\Th}\ITh(K).
$
We now specify these objects.

We introduce the spaces
\begin{equation}\label{eq:LKQK}
    \LKh := (|x-x_K|^2-r_K^2)\,\IP^{p-2}(K),\qquad \QKh:=\IP^{p-2}(K),
\end{equation}
Equip $\QKh$ with
\[
    \|q_h\|_{\QKh}^2 := h_K^{2}\|\nabla q_h\|_{K}^2 + p^{2} h_K \|q_h\|_{\partial K}^2,
\qquad
\|q\|_{\Qh}^2 := \sum_{K\in\Th}\|q_K\|_{\QKh}^2,
\]
and let $\|\cdot\|_{\Qh'}$ denote the induced dual norm.

The embedded Trefftz DG method for the Helmholtz problem is then given by \eqref{eq:PDEh} with
\begin{equation}\label{eq:AK}
    \inner{A_{K} u_h, q_h}_K = h_K\inner{-\Delta u_h-\omega^2 u_h, q_h}_{K} \quad \forall q_h\in \QKh.
\end{equation}
and 
\begin{equation}\label{eq:lK}
    \ell_K(q_h) = h_K\inner{f, q_h}_K \quad \forall q_h\in \QKh.
\end{equation}

We want to emphasize that the first line in \eqref{eq:PDEh} does not enter the final linear system. 
In fact we can conveniently eliminate the degrees of freedom associated with $\LKh$ on each element $K\in\Th$ as the system decouples.
Instead, we follow the idea of the embedded Trefftz DG method \cite{LS_IJMNE_2023} and solve the problem in two steps, generating the Trefftz space along the way.
In order to solve the problem precompute the particular solution $u_{h,f}\in \ILh$ satisfying using an SVD and the pseudo-inverse to solve
\begin{equation}\label{eq:particular}
    \inner{A_{K} u_{h,f}, q_h}_K = \ell_K(q_h) \quad \forall q_h\in \QKh, K\in\Th.
\end{equation}
The kernel of $A_{K}$, computed using the SVD, gives a basis for the local Trefftz space $\ITh(K)$.
Then solve for $u_{h,0}\in\ITh$ satisfying
\begin{equation}\label{eq:homogeneous}
    a_h(u_{h,0},v_h) = \ell_h(v_h) - a_h(u_{h,f},v_h) \quad \forall v_h\in \ITh.
\end{equation}
The final solution is then given by $u_h = u_{h,0} + u_{h,f}$.
While we are aught to test with $\tch\ITh$ we note that our choice of $\tch$ below will map $\ITh$ to itself, so we can test with $\ITh$ instead.

In the following sections we verify the assumptions of \cref{cor:cea} for the embedded Trefftz DG method for the Helmholtz equation.
We start with the analysis of the local problem in \cref{sec:local_analysis}.
Then, in \cref{sec:global_analysis} we verify the assumptions on the global bilinear form $a_h(\cdot,\cdot)$.
Finally, in \cref{sec:quasi_best} we combine the results to obtain stability and quasi-optimality estimates for the embedded Trefftz DG method for Helmholtz.

\subsection{Analysis of the local problem}\label{sec:local_analysis}

We need to verify assumption \eqref{eq:A_cont} and \eqref{eq:A_coerc} for the operator defined in \eqref{eq:AK}.
To verify assumption \eqref{eq:A_coerc} we will use \cref{lem:abstractneumann}.
To this end we need to define a suitable prototype operator $A_{K,0}$ in \cref{sec:prototype} which we will show to be invertible with suitable bounds.
Then, in \cref{sec:AK} we will show that the operator defined in \eqref{eq:AK} is a small perturbation of the prototype operator.

\subsubsection{Prototype operator}\label{sec:prototype}
We can now introduce the prototype operator $A_{K,0}:\LKh \to \QKh'$ defined by
\begin{equation}\label{eq:AK0}
    \inner{A_{K,0} u_h, q_h}_K := \inner{-h_K\Delta u_h, q_h}_{K} \quad \forall q_h\in \QKh.
\end{equation}

\begin{lemma}\label{lem:nonharmonicNE}
Assume \textup{(A0)}--\textup{(A3)}.
Then there exist constants $C_{A,0}>0$ and $c_{A,0}>0$, independent of $h$ and $\omega$, such that
\begin{equation}\label{eq:AK0_scaled_bounds}
\|A_0 u\|_{\Qh'} \le C_{A,0}\tdgonorm{u}
\qquad\forall u\in \Vsh \cap H^2(\Th).
\end{equation}
and
\begin{equation}\label{eq:AK0_scaled_coerc}
\|A_0 u_h\|_{\Qh'} \ge c_{A,0}\dgonorm{u_h}
\qquad\forall u_h\in \Lh.
\end{equation}
Moreover, one may take $C_{A,0}=\sqrt 2$, and
\[
c_{A,0}
=
\frac{1}{\sqrt{C_\star}}
\frac{\vartheta^{p}}{\sqrt{\const{inv}^2+\const{itr}^2}\const{rem}}
\frac{1}{p^{3}},
\]
where $C_\star>0$ depends only on the shape-regularity parameters from \textup{(A1)}--\textup{(A3)}, on $d$,
on $\const{tr}$, and on $C_\Omega$, but is independent of $h$, $p$, and $\omega$.
\end{lemma}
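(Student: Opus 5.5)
The plan is to prove the two bounds separately: continuity \eqref{eq:AK0_scaled_bounds} by one integration by parts on each element, and coercivity \eqref{eq:AK0_scaled_coerc} by exhibiting, for each $u_h\in\Lh$, an explicit test function built from the polynomial factor of $u_h$.

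\emph{Continuity.} For $u\in\Vsh\cap H^2(\Th)$ and $q_K\in\QKh$, integrate by parts once on each $K$:
\[
\inner{A_{K,0}u,q_K}_K=h_K\inner{\nabla u,\nabla q_K}_K-h_K\inner{\nabla u\cdot n,q_K}_{\partial K}.
\]
We pair the factors so that the $\QKh$-norm appears. The first term is at most $\|\nabla u\|_K\,(h_K\|\nabla q_K\|_K)$. The second term is at most $\bigl(p^{-1}h_K^{1/2}\|\nabla u\cdot n\|_{\partial K}\bigr)\bigl(p\,h_K^{1/2}\|q_K\|_{\partial K}\bigr)$. Applying the discrete Cauchy--Schwarz inequality over the two terms and then over $K\in\Th$ gives
\[
|\inner{A_0u,q}|\le\Bigl(\|\nabla u\|_\Th^2+\sum_K p^{-2}h_K\|\nabla u\cdot n\|_{\partial K}^2\Bigr)^{1/2}\|q\|_{\Qh}\le\tdgonorm{u}\,\|q\|_{\Qh}.
\]
This already gives $C_{A,0}=1\le\sqrt2$, so the stated constant is comfortably achieved.

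\emph{Coercivity.} Since $\Lh$ is a direct sum and both $\|\cdot\|_{\Qh'}$ and $\dgonorm{\cdot}$ are defined elementwise up to the jump and boundary terms, it suffices to work on one element $K$. For those non-local terms we use $\tfrac{p^2}{h}\|\jmp{u}\|_F^2\le 2\tfrac{p^2}{h}(\|u^+\|_F^2+\|u^-\|_F^2)$ together with \eqref{itr}, and for the boundary term $\omega\|u\|_{\Fhb}^2\le\omega\,\const{itr}^2p^2h^{-1}\|u\|_K^2$, with $\omega\le 1+\omega^2\le C_\Omega h^{-1}$ by (A0). Combining these with \eqref{inv} gives
\[
\dgonorm{u_h}^2\le C\,(\const{inv}^2+\const{itr}^2)\,p^4h^{-2}\sum_K\|u_h\|_K^2 .
\]
It therefore remains to prove, on each $K$, a bound of the form $\|A_{K,0}u_h\|_{\QKh'}\gtrsim p^{-1}\vartheta^p h_K^{-1}\|u_h\|_K$. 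The factor $p^{-1}$ there, together with the $p^2$ lost in the estimate above, is where the claimed $p^{-3}$ comes from.

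Write $u_h=b\,q_0$ with $b=|x-x_K|^2-r_K^2$ and $q_0\in\IP^{p-2}(K)$. Note that $b$ vanishes on $\partial b_K$ and $|b|\le r_K^2$ inside $b_K$. We test with $q=q_0$ and integrate by parts on $b_K$, not on $K$, so that the boundary terms containing $bq_0$ drop out. The key identity to establish is
\[
\inner{-\Delta(bq_0),q_0}_{b_K}=\inner{-b\Delta q_0,q_0}_{b_K}+2r_K\|q_0\|_{\partial b_K}^2 .
\]
An alternative route is to expand $-\Delta(bq_0)=-b\Delta q_0-4(x-x_K)\cdot\nabla q_0-2d\,q_0$ and use the graded (Fischer-type) invertibility of $q\mapsto\Delta(|x|^2q)$ on homogeneous polynomials. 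Either way, the aim is a lower bound
\[
\inner{-\Delta(bq_0),q_0}_{b_K}\gtrsim \frac{1}{p^{2}}\,\|q_0\|_{b_K}^2\cdot(\text{scaling}),
\]
obtained from a Poincar\'e-type control of $q_0$ on the ball.

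Passing from $b_K$ to $K$ uses the norming inequality \eqref{rem}, which costs $\const{rem}\vartheta^{-p}$. The contribution of $K\setminus b_K$ is controlled because the test function is arbitrary in $\QKh$; if necessary, we instead test with a weighted projection of $q_0$. The norm $\|q_0\|_{\QKh}$ is bounded above by $C\,p\,h_K^{-1/2}\cdots$ via \eqref{inv} and \eqref{itr}. Finally, $\|u_h\|_K\le r_K^2\|q_0\|_K$ relates the result back to $u_h$.

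I expect this element-wise lower bound on $\inner{A_{K,0}u_h,q}$ to be the main obstacle. The difficulty is the sign of the boundary term on $\partial b_K$, together with the need to control the part of $K$ outside the inscribed ball without losing more than one power of $p$ and a factor $\vartheta^p$. If testing directly with $q_0$ fails, the fallback is to use that $q_0\mapsto-\Delta(bq_0)$ is a bijection of $\IP^{p-2}$ and to estimate the norm of its inverse on $b_K$ via the Fischer decomposition.
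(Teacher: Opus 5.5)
Your continuity argument is correct. Pairing the two terms by Cauchy--Schwarz even gives $C_{A,0}=1$ instead of the paper's $\sqrt2$. The coercivity part, however, has a genuine gap exactly at the step you flag, and the route you set up does not close it.

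\textbf{The gap.} The functional $q\mapsto\inner{A_{K,0}u_h,q}_K$ is an integral over all of $K$, so you are not free to integrate by parts on $b_K$ only. With the test function $q_0$ you must also control $h_K\inner{-\Delta(bq_0),q_0}_{K\setminus b_K}$.

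The ball part is in fact harmless. Your identity has the wrong sign on the boundary term; correctly,
$\Re\inner{-\Delta(bq_0),q_0}_{b_K}=\int_{b_K}b|\nabla q_0|^2-d\|q_0\|_{b_K}^2-r_K\|q_0\|_{\partial b_K}^2\le-d\|q_0\|_{b_K}^2$,
which is sign-definite.

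The problem is $K\setminus b_K$. There the bubble $b$ is positive and the integrand has no sign. A polynomial may also carry almost all of its mass there; this is precisely the $\vartheta^{-p}$ effect in \eqref{rem}. So the outer part can cancel the inner one. Neither ``the test function is arbitrary in $\QKh$'' nor an unspecified weighted projection deals with this. The Fischer-decomposition fallback only yields invertibility, not an $h$- and $p$-explicit bound on $K$.

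\textbf{The missing idea.} Test with $q=-\Delta u_h\in\IP^{p-2}(K)=\QKh$.
\begin{enumerate}[label=\roman*)]
\item The pairing becomes $\inner{A_{K,0}u_h,q}_K=h_K\|\Delta u_h\|_K^2$, which is sign-definite on the whole element.
\item \eqref{inv} and \eqref{itr} give $\|A_{K,0}u_h\|_{\QKh'}\ge h_K\|\Delta u_h\|_K/(\sqrt{\const{inv}^2+\const{itr}^2}\,p^2)$.
\item Restricting $\|\Delta u_h\|_K\ge\|\Delta u_h\|_{b_K}$ costs nothing.
\item Since $b$ vanishes on $\partial b_K$, we have $u_h|_{b_K}\in H^2(b_K)\cap H^1_0(b_K)$. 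The Dirichlet estimate on the ball then bounds $\|\nabla u_h\|_{b_K}+r_K^{-1}\|u_h\|_{b_K}$ by a constant times $r_K\|\Delta u_h\|_{b_K}$.
\item \eqref{rem} lifts this to $K$ at the price $\const{rem}\vartheta^{-p}$.
\end{enumerate}

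\textbf{The $p$-bookkeeping.} Your count also does not produce the stated $p^{-3}$.
\begin{itemize}
\item \eqref{inv} and \eqref{itr} give $\|q_0\|_{\QKh}\lesssim p^{2}\|q_0\|_K$, not $p$. So even with the outer part under control, the $q_0$ route with these tools yields a local factor $p^{-2}$, not $p^{-1}$.
\item Your DG-norm estimate applies \eqref{inv} to $\nabla u_h$ and thereby loses $p^2$. Combined with the previous point, you would land at $p^{-4}$.
\item The bound $\|u_h\|_K\le r_K^2\|q_0\|_K$ fails off the ball, where $|b|$ can reach $h_K^2-r_K^2$.
\end{itemize}
The paper instead keeps $\|\nabla u_h\|_K$ in the DG-norm bound and estimates the jump and boundary terms with the continuous trace inequality \eqref{tr}. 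That way only one power of $p$ is lost there, from the penalty weight $p^2/h$. Together with the $p^2$ from $\|\Delta u_h\|_{\QKh}$, this gives exactly $p^{-3}$.
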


\begin{proof}
\textbf{Continuity.}
Fix $K\in\Th$ and $u\in H^2(K)$. For any $q_h\in\QKh$, integration by parts yields
\[
h_K\langle-\Delta u,q_h\rangle_K
= h_K\langle\nabla u,\nabla q_h\rangle_K - h_K\langle\partial_n u,q_h\rangle_{\partial K}.
\]
By definition of $\|\cdot\|_{\QKh}$,
$\|\nabla q_h\|_K\le h_K^{-1}\|q_h\|_{\QKh}$ and
$\|q_h\|_{\partial K}\le (p\sqrt{h_K})^{-1}\|q_h\|_{\QKh}$, hence
\[
|h_K\langle-\Delta u,q_h\rangle_K|
\le \Big(\|\nabla u\|_K + \frac{\sqrt{h_K}}{p}\|\partial_n u\|_{\partial K}\Big)\,\|q_h\|_{\QKh}.
\]
Taking the supremum over $q_h\neq0$ gives
\[
\|A_{K,0}u\|_{\QKh'}
\le \|\nabla u\|_K + \frac{\sqrt{h_K}}{p}\|\partial_n u\|_{\partial K}.
\]
Using $(a+b)^2\le 2(a^2+b^2)$ and summing over $K$,
\[
\|A_0 u\|_{\Qh'}^2
\le 2\|\nabla u\|_\Th^2 + 2\sum_{K\in\Th}\frac{h_K}{p^2}\|\nabla u\cdot n\|_{\partial K}^2
\le 2\,\tdgonorm{u}^2,
\]
so \eqref{eq:AK0_scaled_bounds} holds with $C_{A,0}=\sqrt 2$.

\textbf{Lower bound.}
Let $u_h\in\LKh$ and choose $q_h=-\Delta u_h\in\QKh$:
\begin{equation}\label{eq:dual_lap_scaled}
\|A_{K,0}u_h\|_{\QKh'}
\ge \frac{h_K\|\Delta u_h\|_K^2}{\|\Delta u_h\|_{\QKh}}.
\end{equation}
Using \eqref{inv} and \eqref{itr} for $r_h\in\IP^{p-2}(K)$,
\[
    \|r_h\|_{\QKh}^2
=h_K^2\|\nabla r_h\|_K^2+p^2h_K\|r_h\|_{\partial K}^2
\le (\const{inv}^2+\const{itr}^2)p^4\,\|r_h\|_K^2,
\]
hence applying this with $r_h=\Delta u_h$ in \eqref{eq:dual_lap_scaled} gives
\begin{equation}\label{eq:dual_ge_Delta_scaled}
\|A_{K,0}u_h\|_{\QKh'}
\ge
\frac{h_K}{\sqrt{\const{inv}^2+\const{itr}^2}}\frac{1}{p^2}\,\|\Delta u_h\|_K.
\end{equation}

As before, $u_h|_{\partial b_K}=0$, hence $u_h|_{b_K}\in H^2(b_K)\cap H_0^1(b_K)$.
Using the standard Dirichlet estimate on the ball (dimensionless constant $C_{\rm ball}$ depending only on $d$),
together with \eqref{rem} and $r_K=\vartheta_K h_K\ge \vartheta h_K$, yields
\begin{equation}\label{eq:Delta_controls_grad_u_again}
h_K\,\|\Delta u_h\|_{K}
\ge
\frac{\vartheta^{p+1}}{\const{rem}\,C_{\rm ball}}
\Big(\|\nabla u_h\|_{K}+h_K^{-1}\|u_h\|_{K}\Big).
\end{equation}
Combining \eqref{eq:dual_ge_Delta_scaled} and \eqref{eq:Delta_controls_grad_u_again} yields
\begin{equation}\label{eq:A_controls_bulk_again}
\|A_{K,0}u_h\|_{\QKh'}
\ge
\underbrace{\frac{\vartheta^{p+1}}{\sqrt{\const{inv}^2+\const{itr}^2}\,\const{rem}\,C_{\rm ball}}}_{=: \beta_p}\,
\frac{1}{p^2}\,
\Big(\|\nabla u_h\|_{K}+h_K^{-1}\|u_h\|_{K}\Big).
\end{equation}

By \eqref{tr} on each facet piece $F\subset\partial K$ and Young's inequality with parameter $h_K$,
\[
\|u_h\|_{F}^2
\le \const{tr}\Big(h_F^{-1}\|u_h\|_{K} + \|\nabla u_h\|_{K}\Big)\|u_h\|_{K}
\le C\,\Big(h_K^{-1}\|u_h\|_K^2 + h_K\|\nabla u_h\|_K^2\Big),
\]
with $C$ depending only on $\const{tr}$ and the comparability $h_F\simeq h_K$ from \textup{(A2)}.
Summing over all facet pieces of $\partial K$ and using $n_K\le n_{\max}$ gives
\begin{equation}\label{eq:trace_sq_global}
\|u_h\|_{\partial K}^2 \le C\,\Big(h_K^{-1}\|u_h\|_K^2 + h_K\|\nabla u_h\|_K^2\Big),
\end{equation}
with $C$ depending only on $\const{tr}$ and the shape parameters.

Using $\|[u_h]\|_F^2\le 2(\|u_h^+\|_F^2+\|u_h^-\|_F^2)$, \eqref{eq:trace_sq_global}, and \eqref{ass:oh}
(which implies $\omega h_K\le C_\Omega$ and $\omega h_K^{-1}\le C_\Omega\,h_K^{-2}$), we obtain for $u_h\in\Lh$:
\[
\frac{p^2}{h}\|[u_h]\|_{\Fhi}^2 + \omega\|u_h\|_{\Fhb}^2
\le C\sum_{K\in\Th}\Big(
p^2 h_K^{-2}\|u_h\|_K^2 + p^2\|\nabla u_h\|_K^2
+ h_K^{-2}\|u_h\|_K^2 + \|\nabla u_h\|_K^2
\Big).
\]
Since $p\ge 1$ and $\omega^2\|u_h\|_K^2\le C_\Omega^2\,h_K^{-2}\|u_h\|_K^2$, this yields
\begin{equation}\label{eq:DG_by_bulk_improved}
\dgonorm{u_h}^2
\le C_{\rm DG}\,p^2 \sum_{K\in\Th}\Big(\|\nabla u_h\|_K^2 + h_K^{-2}\|u_h\|_K^2\Big),
\end{equation}
with $C_{\rm DG}$ depending only on the shape parameters, $\const{tr}$ and $C_\Omega$.

Finally, from \eqref{eq:A_controls_bulk_again} and $(a+b)^2\le 2(a^2+b^2)$ we have
\[
\|\nabla u_h\|_K^2 + h_K^{-2}\|u_h\|_K^2
\le 2\Big(\|\nabla u_h\|_K + h_K^{-1}\|u_h\|_K\Big)^2
\le \frac{2p^4}{\beta_p^2}\,\|A_{K,0}u_h\|_{\QKh'}^2.
\]
Insert this into \eqref{eq:DG_by_bulk_improved}, sum over $K$, and use $\beta_p\sim \vartheta^{p}/((\const{inv}^2+\const{itr}^2)^{1/2}\const{rem})$
to get
\[
\dgonorm{u_h}^2 \le C_\star
\frac{(\const{inv}^2+\const{itr}^2)\,\const{rem}^2}{\vartheta^{2p}}
p^{6}
\|A_0u_h\|_{\Qh'}^2.
\]
Taking square roots and rearranging yields \eqref{eq:AK0_scaled_coerc} with the stated $c_{A,0}$.
\end{proof}

We emphasize that we do not expect the exponential dependence of $c_{A,0}^{-1}$ on $p$ in \cref{lem:nonharmonicNE} to be sharp.
In the present proof, this dependence enters through the inequality \eqref{rem}, which introduces the factor $\vartheta^{-p}$ when passing from estimates on the inscribed ball $b_K$ to estimates on the full element $K$.
Within the current argument, this step appears to be unavoidable, so improving the $p$-dependence would likely require a different local coercivity argument.
On the other hand, the numerical results in \cref{sec:pconvergence} do not indicate such a severe deterioration: they rather suggest at most algebraic decay of $c_{A,0}$ in $p$, and the method still exhibits the expected exponential convergence in $p$ for smooth solutions.
Clarifying the sharp $p$-dependence of $c_{A,0}$ is therefore left for future work.

\subsubsection{Properties of the local operator \texorpdfstring{$A_{K}$}{AK}}\label{sec:AK}
Now that we have set up a prototype operator, we are ready to proof \eqref{eq:A_coerc} for our local operator given in \eqref{eq:AK}.
The continuity property \eqref{eq:A_cont} follows as for the prototype operator in the proof of \cref{lem:nonharmonicNE}.
Thus we turn towards the lower bound, which we achieve by using our prototype operator and applying \cref{lem:abstractneumann}.

\begin{lemma}\label{lem:loccoerc}
Assume \textup{(A0)}--\textup{(A3)}. 
Let $c_{A,0}$ be the constant provided by \cref{lem:nonharmonicNE}.
Define
\begin{equation}\label{eq:gamma_loccoerc}
    \gamma_K := \frac{2\sqrt{\const{pf}}}{c_{A,0}}\omega h_K,
\qquad
\gamma := \sup_{K\in\Th}\gamma_K .
\end{equation}
If $\gamma<1$ (equivalently $\omega h < \frac{c_{A,0}}{2\sqrt{\const{pf}}}$, or using
\eqref{ass:oh} take $h < \frac{c_{A,0}^2}{4\const{pf}C_\Omega}$), then $\AK:\LKh\to\QKh'$
is invertible for every $K\in\Th$ and
\begin{equation}\label{eq:loccoerc}
\|\AK u_h\|_{\QKh'} \ge \cA\,\dgonorm{u_h}
\qquad \forall u_h\in\LKh,\ \forall K\in\Th,
\end{equation}
with the explicit constant $\cA:=c_{A,0}(1-\gamma)$.
\end{lemma}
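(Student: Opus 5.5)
We apply \cref{lem:abstractneumann} with the prototype operator $A_{K,0}$ from \eqref{eq:AK0}. \cref{lem:nonharmonicNE} supplies the lower bound \eqref{eq:A0_coerc} with $\cAP=c_{A,0}$ on $\LKh$. On the finite-dimensional space $\LKh$ this lower bound gives injectivity. Since $\dim\LKh=\dim\IP^{p-2}(K)=\dim\QKh'$, it also gives invertibility. It therefore remains to verify the perturbation estimate \eqref{eq:nearlycontant} with $\gamma_K$ as in \eqref{eq:gamma_loccoerc}. Once that holds, \eqref{eq:A_A0_coerc} yields \eqref{eq:loccoerc} with $\cA=c_{A,0}(1-\gamma)$.

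The perturbation is $\langle(\AK-A_{K,0})u_h,q_h\rangle_K=-h_K\omega^2(u_h,q_h)_K$. We bound it by Cauchy--Schwarz and then apply \eqref{pf} to $q_h$:
\[
|h_K\omega^2(u_h,q_h)_K|\le h_K\omega^2\|u_h\|_K\,\sqrt{\const{pf}}\big(h_K^2\|\nabla q_h\|_K^2+h_K\|q_h\|_{\partial K}^2\big)^{1/2}\le \sqrt{\const{pf}}\,h_K\omega^2\|u_h\|_K\,\|q_h\|_{\QKh},
\]
where the last step uses $p\ge1$. Hence $\|(\AK-A_{K,0})u_h\|_{\QKh'}\le\sqrt{\const{pf}}\,\omega h_K\,(\omega\|u_h\|_K)$. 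Next, $\omega\|u_h\|_K$ is bounded by the local part of $\dgonorm{u_h}$ restricted to $K$. Applying the lower bound of \cref{lem:nonharmonicNE} elementwise then gives $\omega\|u_h\|_K\le c_{A,0}^{-1}\|A_{K,0}u_h\|_{\QKh'}$. Combining the two bounds yields \eqref{eq:nearlycontant} with factor $\sqrt{\const{pf}}\,\omega h_K/c_{A,0}\le\gamma_K\le\gamma<1$. The extra factor $2$ in $\gamma_K$ leaves room for whatever constant the localization produces.

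The main obstacle is this localization. \cref{lem:nonharmonicNE} is stated globally, with $\dgonorm{\cdot}$ containing jump and boundary terms. For \eqref{eq:nearlycontant} we need an element-by-element statement. I would check that the proof of \cref{lem:nonharmonicNE} actually gives $\omega\|u_h\|_K\le h_K^{-1}C_\Omega\|u_h\|_K$ controlled by $\|A_{K,0}u_h\|_{\QKh'}$ with the same constant. The natural route is via \eqref{eq:A_controls_bulk_again} together with the bound $\omega\le C_\Omega h_K^{-1}$ from (A0).

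If only the global bound is available, I would not localize at all. Instead I would compare the global operators $A=\bigoplus \AK$ and $A_0=\bigoplus A_{K,0}$ on $\Lh=\bigoplus\LKh$. The difference estimate sums to $\|(A-A_0)u_h\|_{\Qh'}\le\sqrt{\const{pf}}\,\omega h\,\dgonorm{u_h}\le\gamma\|A_0u_h\|_{\Qh'}$. The same argument as in \cref{lem:abstractneumann} then gives $\|Au_h\|_{\Qh'}\ge c_{A,0}(1-\gamma)\dgonorm{u_h}$. Because everything decouples elementwise, applying this to $u_h$ supported on a single $K$ recovers \eqref{eq:loccoerc} and the invertibility of each $\AK$. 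The alternative form of the condition follows from $\omega h\le\sqrt{C_\Omega h}$, which is a consequence of (A0).
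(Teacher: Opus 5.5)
Your proposal is correct and follows the paper's proof: you write $\AK-A_{K,0}$ as the mass term, bound it by Cauchy--Schwarz and \eqref{pf}, then combine $\omega\|u_h\|_K\le\dgonorm{u_h}$ with the lower bound of \cref{lem:nonharmonicNE} and invoke \cref{lem:abstractneumann}. Your squared use of \eqref{pf} even gives $\sqrt{\const{pf}}$ in place of the paper's $2\sqrt{\const{pf}}$, and the localization you flag is not a real obstacle: $u_h\in\LKh$ is an element of $V_h$ supported on $K$, so the global bound of \cref{lem:nonharmonicNE} applied to it is exactly the local bound $\|A_{K,0}u_h\|_{\QKh'}\ge c_{A,0}\dgonorm{u_h}$ that the paper uses implicitly.
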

\begin{proof}
For any $u_h\in\LKh$ and any $q_h\in\QKh$,
\[
\langle (\AK-A_{K,0})u_h, q_h\rangle_K = -\omega^2 h_K\langle u_h,q_h\rangle_K.
\]
Hence, by definition of the dual norm and Cauchy--Schwarz,
\[
\|(\AK-A_{K,0})u_h\|_{\QKh'}
= \sup_{q_h\in\QKh\setminus\{0\}}
\frac{\omega^2 h_K|\langle u_h,q_h\rangle_K|}{\|q_h\|_{\QKh}}
\le \omega^2 h_K \|u_h\|_K \sup_{q_h\neq0}\frac{\|q_h\|_K}{\|q_h\|_{\QKh}}.
\]
Using \eqref{pf} and the definition of $\|\cdot\|_{\QKh}$,
\[
\|q_h\|_K
\le \sqrt{\const{pf}}\Big(h_K\|\nabla q_h\|_K + h_K^{1/2}\|q_h\|_{\partial K}\Big)
\le 2\sqrt{\const{pf}}\,\|q_h\|_{\QKh},
\]
for $p\ge1$. Therefore
\begin{equation}\label{eq:perturb_loccoerc}
    \|(\AK-A_{K,0})u_h\|_{\QKh'} \le 2\sqrt{\const{pf}}\,\omega^2 h_K \|u_h\|_K.
\end{equation}

By definition of $\dgonorm{\cdot}$, $\dgonorm{u_h}^2 \ge \omega^2\|u_h\|_K^2$, and by \eqref{eq:AK0_scaled_coerc},
\[
\|A_{K,0}u_h\|_{\QKh'} \ge c_{A,0} \dgonorm{u_h}.
\]
Using these in \eqref{eq:perturb_loccoerc} gives
\[
\|(\AK-A_{K,0})u_h\|_{\QKh'}
\le \frac{2\sqrt{\const{pf}}}{c_{A,0}}\omega h_K \|A_{K,0}u_h\|_{\QKh'}
= \gamma_K\|A_{K,0}u_h\|_{\QKh'}.
\]
i.e.\ \eqref{eq:nearlycontant} holds with $\gamma=\sup_K\gamma_K$.
For $\gamma<1$ \cref{lem:abstractneumann} gives the result.
\end{proof}

\begin{lemma}[Continuity of $\AK$]\label{lem:AK_cont}
Assume that (A0) holds.
Then for all $u\in \Vsh\cap H^2(\Th)$ there holds
\begin{equation}\label{eq:AK_cont_helmholtz}
\sum_{K\in\Th}\|\AK u\|_{\QKh'}^2 \le \CA^2\,\tdgonorm{u}^2,
\end{equation}
with the fully explicit choice
\[
\CA^2 = 4 + 2\,\const{pf}C_\Omega^2.
\]
\end{lemma}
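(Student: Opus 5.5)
The plan is to split $\AK = A_{K,0} + (\AK - A_{K,0})$ and bound the two pieces separately in $\QKh'$. The identity
\[
\langle \AK u, q_h\rangle_K = \langle A_{K,0}u,q_h\rangle_K - \omega^2 h_K\langle u,q_h\rangle_K
\]
holds for every $u\in H^2(K)$, not only for discrete functions. The elementary inequality $(a+b)^2\le 2(a^2+b^2)$ then gives
\[
\sum_{K\in\Th}\|\AK u\|_{\QKh'}^2 \le 2\|A_0u\|_{\Qh'}^2 + 2\sum_{K\in\Th}\|(\AK-A_{K,0})u\|_{\QKh'}^2 .
\]

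For the first term we invoke the continuity part of \cref{lem:nonharmonicNE}, i.e. \eqref{eq:AK0_scaled_bounds} with $C_{A,0}=\sqrt2$. This yields $2\|A_0u\|_{\Qh'}^2\le 4\tdgonorm{u}^2$ and accounts for the summand $4$ in $\CA^2$. Its proof used only integration by parts elementwise, so it applies to any $u\in\Vsh\cap H^2(\Th)$.

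For the perturbation term we repeat the argument of \cref{lem:loccoerc}, which never used that $u$ is a polynomial: Cauchy--Schwarz gives $\omega^2h_K|\langle u,q_h\rangle_K|\le \omega^2 h_K\|u\|_K\|q_h\|_K$. We then bound $\|q_h\|_K$ by \eqref{pf} together with the definition of $\|\cdot\|_{\QKh}$, using $p\ge1$. The one point needing care is the constant. We want $\|q_h\|_K^2\le \const{pf}(h_K^2\|\nabla q_h\|_K^2+h_K\|q_h\|_{\partial K}^2)\le \const{pf}\|q_h\|_{\QKh}^2$, so we work with the squared form of \eqref{pf} directly rather than the factor $2\sqrt{\const{pf}}$ from the previous proof. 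This gives
\[
\|(\AK-A_{K,0})u\|_{\QKh'}^2\le \const{pf}\,\omega^4h_K^2\|u\|_K^2 .
\]

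It remains to convert $\omega^4 h_K^2$ into $\omega^2$ times a constant. By (A0), $\omega^2 h_K\le (1+\omega^2)h\le C_\Omega$, so $\omega^4h_K^2\le C_\Omega^2\,\omega^2$. Summing over $K$ gives $\sum_K\|(\AK-A_{K,0})u\|_{\QKh'}^2\le \const{pf}C_\Omega^2\,\omega^2\|u\|_\Th^2\le \const{pf}C_\Omega^2\tdgonorm{u}^2$. After the factor $2$ from the splitting, this is the $2\const{pf}C_\Omega^2$ term, and combining the two bounds gives $\CA^2=4+2\const{pf}C_\Omega^2$.

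There is no real obstacle here. The only delicate step is the constant bookkeeping: the squared Poincar\'e--Friedrichs bound must be used with constant $\const{pf}$ (not $4\const{pf}$), and (A0) must be used in the form $\omega^2h_K\le C_\Omega$, so that the result matches the stated explicit $\CA$.
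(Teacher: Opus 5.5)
Your proposal follows the paper's proof essentially step for step: you split off the mass term $-\omega^2h_K\langle u,q_h\rangle_K$, use the prototype continuity with $C_{A,0}=\sqrt2$, bound the mass term by Cauchy--Schwarz and \eqref{pf} with constant $\sqrt{\const{pf}}$, and close with (A0) and $(a+b)^2\le 2(a^2+b^2)$.

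One justification is wrong, though easy to repair. From $\omega^2h_K\le C_\Omega$ you only get $\omega^4h_K^2\le C_\Omega^2$, and this does not imply $\omega^4h_K^2\le C_\Omega^2\,\omega^2$ when $\omega<1$. So your closing remark about which form of (A0) to use is backwards. What you need is $\omega h_K\le C_\Omega$. This follows from (A0) because $\omega\le 1+\omega^2$, and it is exactly the form the paper uses (written as $\omega^2 h\le C_\Omega\,\omega$). With that fix, the constant $\CA^2=4+2\const{pf}C_\Omega^2$ comes out as claimed.
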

\begin{proof}
Fix $K\in\Th$ and $u\in H^2(K)$. By definition and the triangle inequality,
\[
\|\AK u\|_{\QKh'}
\le \|A_{K,0}u\|_{\QKh'} + \omega^2 h_K \sup_{q_h\neq 0}\frac{|\langle u,q_h\rangle_K|}{\|q_h\|_{\QKh}}.
\]
The prototype continuity estimate (cf.\ \cref{lem:nonharmonicNE}, continuity part) gives
\[
\sum_{K\in\Th}\|A_{K,0}u\|_{\QKh'}^2 \le 2\,\tdgonorm{u}^2.
\]
For the mass term, Cauchy--Schwarz and \eqref{pf} yield for any $q_h\in\QKh$
\[
\omega^2 h_K|\langle u,q_h\rangle_K| \le \omega^2 h_K\|u\|_{K}\,\|q_h\|_{K}
\le 
\omega^2 h_K\|u\|_{K}\sqrt{\const{pf}}\norm{q_h}_{\QKh},
\]
Using \eqref{ass:oh} we have $\omega^2 h \le C_\Omega\frac{\omega^2}{1+\omega^2}\le C_\Omega\,\omega, $ and therefore
\[
\|\AK u\|_{\QKh'}
\le \|A_{K,0}u\|_{\QKh'} + \sqrt{\const{pf}}C_\Omega\,\omega \|u\|_{K}.
\]
Using $(a+b)^2\le 2(a^2+b^2)$ and summing over $K$ yields
\[
\sum_{K\in\Th}\|\AK u\|_{\QKh'}^2
\le 2\sum_{K\in\Th}\left(\|A_{K,0}u\|_{\QKh'}^2
+ \const{pf}C_\Omega^2\omega^2 \|u\|_{K}^2\right).
\]
Combining the bounds and using $\omega^2\|u\|_{\Th}^2\le \tdgonorm{u}^2$ gives \eqref{eq:AK_cont_helmholtz} with
$\CA^2 = 4 + 2\,\const{pf}C_\Omega^2$.
\end{proof}

\subsection{Analysis of the global problem}\label{sec:global_analysis}
To apply \cref{cor:cea} we will now verify $T$-coercivity of the global problem on the Trefftz space \eqref{eq:ah_coerc} in \cref{sec:ah_coerc}.
And continuity \eqref{eq:ah_cont} in \cref{sec:ah_cont}.

\subsubsection{Continuity of \texorpdfstring{$a_h(\cdot,\cdot)$}{ah}}\label{sec:ah_cont}

\begin{lemma}\label{lem:ah_continuity}
Let $(u,v_h)\in V_{*h}\times V_h$. Then
\[
|a_h(u,v_h)|\ \le\ \Ca\,\tdgonorm{u}\,\dgonorm{v_h},
\]
with
\[
\Ca = 3+\alpha+\const{itr}\,C_\sharp^{1/2},
\]
where $C_\sharp$ is the bounded number of facets per element.
\end{lemma}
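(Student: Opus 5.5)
We bound each of the six terms of $a_h(u,v_h)$ separately by Cauchy--Schwarz, and then add the contributions. The first three terms pair matching components of $\tdgonorm{u}$ and $\dgonorm{v_h}$. We have $|\inner{\grad u,\grad v_h}_\Th|\le \norm{\grad u}_\Th\norm{\grad v_h}_\Th$. Writing $\omega^2|\inner{u,v_h}_\Th|\le (\omega\norm{u}_\Th)(\omega\norm{v_h}_\Th)$ handles the mass term. The boundary term satisfies $\omega|\inner{u,v_h}_\Fhb|\le (\omega^{1/2}\norm{u}_\Fhb)(\omega^{1/2}\norm{v_h}_\Fhb)$. For the penalty term, $\frac{\alpha p^2}{h}|\inner{\jmp{u},\jmp{v_h}}_\Fhi|\le \alpha\,(\frac{p}{\sqrt h}\norm{\jmp{u}}_\Fhi)(\frac{p}{\sqrt h}\norm{\jmp{v_h}}_\Fhi)$. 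Each of these is bounded by the corresponding product of seminorm pieces, contributing at most $3+\alpha$ in total once a discrete Cauchy--Schwarz over the components is used.

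The consistency term $\inner{\avg{\grad u\cdot n},\jmp{v_h}}_\Fhi$ involves the normal derivative of the non-discrete argument $u$. This is exactly what the extra term in $\tdgonorm{\cdot}$ controls. We estimate
\[
|\inner{\avg{\grad u\cdot n},\jmp{v_h}}_\Fhi|\le \Big(\sum_{F\in\Fhi}\frac{h}{p^2}\norm{\avg{\grad u\cdot n}}_F^2\Big)^{1/2}\Big(\frac{p^2}{h}\norm{\jmp{v_h}}_\Fhi^2\Big)^{1/2}.
\]
We then use $\norm{\avg{w}}_F^2\le\frac12(\norm{w^+}_F^2+\norm{w^-}_F^2)$ and regroup the face sums into element boundary sums. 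Since $h_K\le h$, the face weight $h/p^2$ is not the element weight $h_K/p^2$ appearing in $\tdgonorm{\cdot}$, so we need $h\lesssim h_K$, i.e.\ quasi-uniformity. Alternatively we read $h$ locally as $h_F\simeq h_K$, which follows from (A2). With that reading, this term is bounded by $\tdgonorm{u}\dgonorm{v_h}$ with constant $1$.

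The symmetrizing term $\inner{\jmp{u},\avg{\grad v_h\cdot n}}_\Fhi$ is the main obstacle. Here the derivative falls on the discrete function $v_h$, and $\dgonorm{v_h}$ contains no trace of $\grad v_h$, so we must use the polynomial inverse trace inequality \eqref{itr}. Applied componentwise to $\grad v_h|_K\in\IP^{p-1}(K)^d$, it gives $\norm{\grad v_h\cdot n}_{F}\le\norm{\grad v_h}_{\partial K}\le \const{itr}\,p\,h_K^{-1/2}\norm{\grad v_h}_K$. Hence $\frac{h}{p^2}\norm{\avg{\grad v_h\cdot n}}_F^2\lesssim \const{itr}^2\norm{\grad v_h}_{K^\pm}^2$. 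When summing over interior faces, each element gradient norm is counted once per facet of $K$, producing the factor $C_\sharp$. This gives
\[
|\inner{\jmp{u},\avg{\grad v_h\cdot n}}_\Fhi|\le \const{itr}C_\sharp^{1/2}\Big(\frac{p^2}{h}\norm{\jmp{u}}_\Fhi^2\Big)^{1/2}\norm{\grad v_h}_\Th.
\]
The care needed is in the bookkeeping of the average: the factor $\frac12$ from the average against the factor $2$ from the two neighbours, and the $h$ versus $h_K$ mismatch, again resolved via (A2) or quasi-uniformity.

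Finally we sum the six bounds. Every factor on the $u$ side is at most $\tdgonorm{u}$ and every factor on the $v_h$ side is at most $\dgonorm{v_h}$. This yields the constant $\Ca=3+\alpha+1+\const{itr}C_\sharp^{1/2}$. Grouping the two terms with constant $1$ more tightly through a joint Cauchy--Schwarz brings this down to the stated $3+\alpha+\const{itr}C_\sharp^{1/2}$.
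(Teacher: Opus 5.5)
Your argument is the paper's proof. It uses term-by-term Cauchy--Schwarz, the bound $\|\avg{\phi}\|_F^2\le\frac12(\|\phi^+\|_F^2+\|\phi^-\|_F^2)$ for the consistency term, and a facetwise use of \eqref{itr} for the symmetric term, which yields $C_\sharp^{1/2}$; you apply \eqref{itr} slightly more cleanly, componentwise to $\nabla v_h$. Your tallying of constants is loose, but the stated $\Ca$ has enough slack. The mismatch you flag between the global $h$ in the weight $p^2/h$ and the local $h_K$ in $\tdgonorm{\cdot}$ and \eqref{itr} is genuine, and the paper passes over it silently. A bubble $u\in H^1_0(K)$ on a small element $K$, paired with $v_h=\chi_K$, makes the ratio grow like $(h/h_K)^{1/2}/p$, so quasi-uniformity is really needed, or else a face-local size such as $h_F=\min(h_{K^+},h_{K^-})$ in place of $h$.
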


\begin{proof}
By Cauchy--Schwarz on the volume and penalty terms,
\begin{align*}
|a_h(u,v_h)|
\le&
\|\nabla u\|_\Th\|\nabla v_h\|_\Th
+\omega^2\|u\|_\Th\|v_h\|_\Th
+\alpha\frac{p^2}{h}\|\jmp{u}\|_{\Fhi}\|\jmp{v_h}\|_{\Fhi}
+\omega\|u\|_{\Fhb}\|v_h\|_{\Fhb}
\\
&\quad
+\|\avg{\nabla u\cdot n}\|_{\Fhi}\|\jmp{v_h}\|_{\Fhi}
+\|\jmp{u}\|_{\Fhi}\|\avg{\nabla v_h\cdot n}\|_{\Fhi}.
\end{align*}
Using $\|\avg{\phi}\|_{\Fhi}^2\le \frac12\sum_{K}\|\phi\|_{\partial K\cap\Fhi}^2$ and the definitions of
$\tdgonorm{\cdot}$, $\dgonorm{\cdot}$,
\[
\|\avg{\nabla u\cdot n}\|_{\Fhi}\,\|\jmp{v_h}\|_{\Fhi}
\le \frac1{\sqrt2}\tdgonorm{u}\,\dgonorm{v_h}.
\]
For the second coupling term, the inverse trace \eqref{itr} applied to $r_h=\nabla v_h\cdot n$ on each facet yield
\[
\|\jmp{u}\|_{\Fhi}\,\|\avg{\nabla v_h\cdot n}\|_{\Fhi}
\le \const{itr}\,C_\sharp^{1/2}\,\tdgonorm{u}\,\dgonorm{v_h}.
\]
Collecting the bounds gives the result with $\Ca=3+\alpha+\const{itr}C_\sharp^{1/2}$.
\end{proof}

\subsubsection{\texorpdfstring{$T$-coercivity of $a_h(\cdot,\cdot)$}{T-coercivity of ah}}\label{sec:ah_coerc}
We follow a Schatz argument, using the regularity of the Helmholtz problem and the approximation properties of the Trefftz space to construct a suitable test function for $T$-coercivity.
Before we can do this, we need to verify the relation between the $\tdgonorm{\cdot}$ and $\dgonorm{\cdot}$ norms on the discrete space $V_h$, verifying assumption \eqref{eq:Cstar}.

\begin{lemma}
For polynomials $u_h\in V_h$ there holds \eqref{eq:Cstar}, i.e.
\begin{equation}\label{cd}
    \tdgonorm{u_h} \leq \CD \dgonorm{u_h},
\end{equation}
with $\CD = \sqrt{1 + \const{itr}^2}$.
\end{lemma}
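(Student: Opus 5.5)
The norms $\tdgonorm{\cdot}$ and $\dgonorm{\cdot}$ differ only by the additional normal-flux term. It therefore suffices to bound
\[
\sum_{K\in\Th}p^{-2}h\norm{\grad u_h\cdot n}^2_{\partial K}
\]
by $\const{itr}^2\dgonorm{u_h}^2$ for $u_h\in V_h$. Indeed, once this holds we get $\tdgonorm{u_h}^2\le (1+\const{itr}^2)\dgonorm{u_h}^2$, and taking square roots gives $\CD=\sqrt{1+\const{itr}^2}$.

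To prove this bound, we fix $K\in\Th$ and observe that $|\grad u_h\cdot n|\le|\grad u_h|$ pointwise on $\partial K$. Hence $\norm{\grad u_h\cdot n}_{\partial K}\le\norm{\grad u_h}_{\partial K}$. Each component $\partial_j u_h$ lies in $\IP^{p-1}(K)\subset\IP^p(K)$, so the polynomial inverse trace inequality \eqref{itr} can be applied componentwise. Summing over the components gives
\[
\norm{\grad u_h}_{\partial K}^2\le \const{itr}^2p^2h_K^{-1}\norm{\grad u_h}_K^2 .
\]
Multiplying by $p^{-2}h$ yields $p^{-2}h\norm{\grad u_h\cdot n}^2_{\partial K}\le \const{itr}^2 (h/h_K)\norm{\grad u_h}_K^2$.

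The only point needing care is the mesh-size factor. The norm uses the global $h=\sup_K h_K$, whereas \eqref{itr} involves the local $h_K$, which leaves the ratio $h/h_K\ge 1$. I would read the weight in $\tdgonorm{\cdot}$ as the local mesh size $h_K$, consistent with the per-element sum and with the $h_K$ scaling used in $\|\cdot\|_{\QKh}$ and in the continuity part of \cref{lem:nonharmonicNE}. With that reading the ratio equals one. If instead the global $h$ is intended, the mesh must be quasi-uniform and the constant acquires a factor $\max_K h/h_K$. The stated value of $\CD$ matches the local reading, so that is the version I would prove.

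Summing over $K$ then gives
\[
\sum_K p^{-2}h_K\norm{\grad u_h\cdot n}^2_{\partial K}\le\const{itr}^2\norm{\grad u_h}_\Th^2\le\const{itr}^2\dgonorm{u_h}^2,
\]
and adding $\dgonorm{u_h}^2$ to both sides concludes the proof.
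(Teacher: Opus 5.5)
Your proof is correct and follows the paper's argument: apply \eqref{itr} to $\nabla u_h$ on each element, multiply by $p^{-2}h_K$, sum over $K$, and absorb $\|\nabla u_h\|_\Th^2$ into $\dgonorm{u_h}^2$. Reading the weight as the local $h_K$ matches what the paper's proof itself uses (its displayed estimate is written with $p^{-2}h_K$), so the $h$ versus $h_K$ mismatch you flagged is a notational inconsistency in the norm definition, not a gap in your argument.
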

\begin{proof}
Using the inverse trace inequality \eqref{itr} on each element gives
\[
p^{-2} h_K \norm{\grad u_h \cdot n}_{\partial K\cap\Fhi}^2
\le \const{itr}^2 \norm{\grad u_h}_{K}^2 .
\]
Summing over $K\in\Th$, we obtain
\[
\tdgonorm{u_h}^2
\le
\dgonorm{u_h}^2 + \const{itr}^2 \norm{\grad u_h}_{\Th}^2
\le
(1+\const{itr}^2)\dgonorm{u_h}^2 .
\]
Taking square roots yields \eqref{cd}.
\end{proof}

We will make use of the following regularity result for the Helmholtz problem. 

\begin{lemma}[Helmholtz regularity {\cite[Prop.\ 8.1.4]{MM95}}]\label{lem:regularity}
Let $\Omega$ be a bounded star-shaped domain with smooth boundary or a bounded convex domain.
Then there exists $C_\dom>0$, depending only on $\Omega$, such that for any
$f\in L^2(\Omega)$ and $g\in H^{1/2}(\partial\Omega)$ the solution $u$ of \eqref{eq:helmholtz} satisfies
\begin{subequations}\label{eq:regularity}
\begin{align}
    \norm{u}_{V,\omega} &\leq C_\dom \bigl(\norm{f}_\Omega + \norm{g}_{\partial\Omega}\bigr),\\
    \snorm{u}_{H^2(\Omega)} &\leq C_\dom \Bigl[(1+|\omega|)\bigl(\norm{f}_\Omega + \norm{g}_{\partial\Omega}\bigr)
    +\norm{g}_{H^{1/2}(\partial\Omega)}\Bigr].
\end{align}
\end{subequations}
\end{lemma}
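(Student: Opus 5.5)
This regularity result is the classical one from \cite[Prop.\ 8.1.4]{MM95}, so the plan is to retrace the Melenk argument for the impedance problem rather than derive anything new. We split the argument into the energy bound (first estimate) and the $H^2$ shift (second estimate).

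\textbf{Step 1: $\omega$-explicit $H^1$ bound via a Rellich identity.} Test \eqref{eq:helmholtzvar} with $v=u$ and take imaginary and real parts. This gives
\[
\omega\norm{u}_{\partial\Omega}^2 \le \norm{f}_\Omega\norm{u}_\Omega+\norm{g}_{\partial\Omega}\norm{u}_{\partial\Omega}
\]
and
\[
\norm{\grad u}_\Omega^2=\omega^2\norm{u}_\Omega^2+\Re(\cdots).
\]
Both relations are useless alone, since they only bound $\grad u$ in terms of $\omega u$. The missing control of $\omega\norm{u}_\Omega$ comes from the Morawetz/Rellich multiplier $v=x\cdot\grad \bar u$, using star-shapedness $x\cdot n\ge c>0$ on $\partial\Omega$. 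This requires $u\in H^2$ (or a density argument), which holds for smooth or convex domains. The resulting identity is
\[
\tfrac{d-2}{2}\norm{\grad u}^2-\tfrac d2\omega^2\norm{u}^2+\text{boundary terms}=\Re(f,x\cdot\grad u).
\]
Adding a suitable multiple of the real-part identity yields $\norm{\grad u}^2+\omega^2\norm{u}^2$ on one side. The boundary terms ($\partial_n u$ is replaced by $g-i\omega u$, and the tangential gradient and $\omega^2|u|^2$ on $\partial\Omega$ appear) are absorbed using the imaginary-part bound on $\omega\norm{u}^2_{\partial\Omega}$ and Young's inequality. The outcome is $\norm{u}_{V,\omega}\le C_\dom(\norm f_\Omega+\norm g_{\partial\Omega})$ with $C_\dom$ depending only on $\diam\Omega$ and the star-shapedness constant. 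We expect this step to be the main obstacle: the boundary terms must be tracked carefully so that no positive power of $\omega$ survives. The convex, nonsmooth case is handled by approximating with smooth convex domains, with constants uniform in the approximation.

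\textbf{Step 2: $H^2$ shift.} We view $u$ as the solution of the Poisson–Neumann problem
\[
-\Delta u=\tilde f:=f+\omega^2u,\qquad \partial_n u=\tilde g:=g-i\omega u .
\]
Elliptic $H^2$ regularity on smooth or convex domains gives
\[
\snorm{u}_{H^2}\le C(\norm{\tilde f}_\Omega+\norm{\tilde g}_{H^{1/2}(\partial\Omega)}+\norm{u}_{H^1}).
\]
We then bound each term. First, $\omega^2\norm u_\Omega\le \omega\norm{u}_{V,\omega}$. Second, by the trace theorem, $\omega\norm{u}_{H^{1/2}(\partial\Omega)}\le C\omega\norm{u}_{H^1}\le C(1+\omega)\norm{u}_{V,\omega}$ for the $\omega$-weighted norm. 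Inserting Step 1 gives the factor $(1+|\omega|)(\norm f+\norm g)$ plus $\norm{g}_{H^{1/2}(\partial\Omega)}$, which is exactly the second estimate in \eqref{eq:regularity}.
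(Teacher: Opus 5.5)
\textbf{Assessment.} The paper gives no proof of this lemma; it only cites Melenk's thesis. Your plan is the standard argument behind that reference: a Rellich identity with multiplier $x\cdot\grad\bar u$ for the energy bound, then an $H^2$ shift for the Neumann problem with data $f+\omega^2u$, $g-i\omega u$. As written, however, it proves the estimates only for $\omega\ge\omega_0>0$, with a constant depending on $\omega_0$. The statement asserts a constant depending only on $\Omega$ for every $\omega>0$, which is why the factor $(1+|\omega|)$ appears. So the danger is negative, not positive, powers of $\omega$.

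In Step 1, the real-part identity you add contributes $\Re(f,u)_\Omega+\Re(g,u)_{\partial\Omega}$. Your only controls are $\|u\|_\Omega\le\omega^{-1}\|u\|_{V,\omega}$ and, from the imaginary part, a bound on $\omega\|u\|_{\partial\Omega}$. This yields only $\|u\|_{V,\omega}\le C(1+\omega^{-1})(\|f\|_\Omega+\|g\|_{\partial\Omega})$.

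In Step 2, the lower-order term $\|u\|_{H^1(\Omega)}$ in your elliptic estimate is not just badly estimated; it is genuinely unbounded as $\omega\to0$. For $f=1$, $g=0$, testing \eqref{eq:helmholtzvar} with $v=1$ gives $i\omega\int_{\partial\Omega}u-\omega^2\int_\Omega u=|\Omega|$. By the trace inequality this gives $|\Omega|\le C\omega\|u\|_{H^1(\Omega)}$ for $\omega\le 1$.

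\textbf{Repair.} Both points can be fixed. For $\omega\le\omega_0$, with $\omega_0$ fixed by a Friedrichs constant of $\Omega$, write $u=c+w$ with $c\in\IC$ and $\int_{\partial\Omega}w=0$.
\begin{enumerate}
\item Testing with $v=1$ gives $\omega|c|\,|\partial\Omega|\le|\Omega|^{1/2}\|f\|_\Omega+|\partial\Omega|^{1/2}\|g\|_{\partial\Omega}+\omega^2|\Omega|^{1/2}\|w\|_\Omega$.
\item Test with $v=w$; the constant contributes only $-\omega^2c\int_\Omega\bar w$. Take real parts and use $\|w\|_\Omega+\|w\|_{\partial\Omega}\le C_F\|\grad w\|_\Omega$. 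After absorbing the small-$\omega$ terms, this gives $\|\grad w\|_\Omega\le C(\|f\|_\Omega+\|g\|_{\partial\Omega})$.
\item Together these bound $\|u\|_{V,\omega}$ uniformly for $\omega\le\omega_0$. Your Rellich argument covers $\omega\ge\omega_0$ with a constant that then depends only on $\Omega$.
\end{enumerate}
In Step 2, apply the Neumann estimate to $u$ minus its mean, i.e.\ use $\snorm{u}_{H^2(\Omega)}\le C(\|\Delta u\|_\Omega+\|\partial_nu\|_{H^{1/2}(\partial\Omega)}+\|\grad u\|_\Omega)$. Your bounds on $\omega^2\|u\|_\Omega$ and $\omega\|u\|_{H^{1/2}(\partial\Omega)}$ then finish the argument.

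A smaller point: for nonsmooth convex $\Omega$, the textbook $H^2$ result is stated for homogeneous Neumann data. The shift with datum $g-i\omega u$ therefore also needs a lifting of the boundary datum, or the smooth convex approximation you invoke in Step 1.
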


\begin{lemma}[Trefftz approximation of $H^2$-functions]\label{lem:h2approx}
Assume \textup{(A0)}--\textup{(A3)} and
\[
1-\frac{2\sqrt{\const{pf}}}{c_{A,0}}\sqrt{C_\Omega h}\ge C_2>0
\]
for a constant $C_2$ independent of $h$ and $\omega$.
Then for every $z\in H^2(\Omega)$ there exists $z_h\in\ITh$ such that
\begin{align}
\tdgonorm{z-z_h}
&\le \const{h2appra}^{(2)} h \snorm{z}_{H^2(\Omega)}
 + \const{h2appra}^{(V)} \omega h \norm{z}_{V,\omega}.
\label{h2appra}
\end{align}
with
\[
\const{h2appra}^{(2)}
:= C_{\rm conf}\Bigl(1+2\CD \sqrt{\const{pf}}\,c_{A,0}^{-1}C_2^{-1} C_\Omega^2\Bigr),
\qquad
\const{h2appra}^{(V)}
:= 2\CD \sqrt{\const{pf}}\,c_{A,0}^{-1}C_2^{-1}.
\]
\end{lemma}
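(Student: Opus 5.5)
Start from a conforming (or broken) polynomial quasi-interpolant $z_c\in V_h$ with the standard $H^2$ approximation property. Then correct it by subtracting its local $\LKh$-component so that the result lies in the Trefftz space $\ITh$. Concretely, we assume $C_{\rm conf}$ is such that
\[
\tdgonorm{z-z_c}\le C_{\rm conf}\, h\snorm{z}_{H^2(\Omega)} .
\]
This is standard for $p\ge1$: on $V_h\cap H^1(\Omega)$ the jump term vanishes, the trace term $p^{-2}h\norm{\grad(z-z_c)\cdot n}_{\partial K}^2$ is handled by \eqref{tr}, and the lower-order terms use $\omega h\lesssim 1$ and the $H^1$-estimate. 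On each $K$ we then define $w_K\in\LKh$ by
\[
\inner{\AK w_K,q_h}_K=\inner{\AK z_c,q_h}_K\quad\forall q_h\in\QKh .
\]
This is solvable because $\AK:\LKh\to\QKh'$ is invertible by \cref{lem:loccoerc}. We set $z_h:=z_c-w$, where $w=\sum_K w_K$. By construction $\AK z_h=0$, so $z_h\in\ITh$.

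The error splits as $\tdgonorm{z-z_h}\le \tdgonorm{z-z_c}+\tdgonorm{w}$. Since $w\in V_h$, \eqref{cd} gives $\tdgonorm{w}\le\CD\dgonorm{w}$. The coercivity \eqref{eq:loccoerc} gives $\dgonorm{w}\le \cA^{-1}\norm{\ATh z_c}_{\Qh'}$, with $\cA=c_{A,0}(1-\gamma)$. The hypothesis $1-\frac{2\sqrt{\const{pf}}}{c_{A,0}}\sqrt{C_\Omega h}\ge C_2$ should be exactly what makes $1-\gamma\ge C_2$, since $\omega h\le\sqrt{C_\Omega h}$ by (A0). Hence $\cA^{-1}\le c_{A,0}^{-1}C_2^{-1}$, which produces the factor $\CD c_{A,0}^{-1}C_2^{-1}$ appearing in both constants.

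It remains to bound $\norm{\ATh z_c}_{\Qh'}$. Write $\AK z_c=\AK(z_c-z)+\AK z$. The first part is controlled by \cref{lem:AK_cont}, giving $\CA\tdgonorm{z-z_c}\lesssim h\snorm{z}_{H^2}$; this contribution should be absorbed into the $C_{\rm conf}(\dots)$ term. The second part is $h_K\inner{-\Delta z-\omega^2 z,q_h}_K$. Using Cauchy--Schwarz and $\norm{q_h}_K\le 2\sqrt{\const{pf}}\norm{q_h}_{\QKh}$ (as in the proof of \cref{lem:loccoerc}) we get
\[
\norm{\AK z}_{\QKh'}\le 2\sqrt{\const{pf}}\,h_K\big(\norm{\Delta z}_K+\omega^2\norm{z}_K\big).
\]
Summing, the Laplacian piece gives $2\sqrt{\const{pf}}\,h\snorm{z}_{H^2}$ up to the dimensional factor relating $\norm{\Delta z}$ to $\snorm{z}_{H^2}$. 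The mass piece gives $2\sqrt{\const{pf}}\,\omega h\,\omega\norm{z}_\Omega\le 2\sqrt{\const{pf}}\,\omega h\norm{z}_{V,\omega}$. Multiplying by $\CD c_{A,0}^{-1}C_2^{-1}$ reproduces $\const{h2appra}^{(V)}$ exactly. The $H^2$ contributions, after generous use of (A0) (whence the factor $C_\Omega^2$), fold into $\const{h2appra}^{(2)}$.

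The main obstacle is the bookkeeping in that last step. The term $\AK(z_c-z)$ must be estimated so that it matches the stated form $C_{\rm conf}\cdot 2\CD\sqrt{\const{pf}}c_{A,0}^{-1}C_2^{-1}C_\Omega^2$. I would attempt this by bounding $\norm{\AK(z_c-z)}_{\QKh'}$ directly via the integration-by-parts continuity argument. That bound involves $\norm{\grad(z-z_c)}_K$, $\sqrt{h_K}p^{-1}\norm{\partial_n(z-z_c)}_{\partial K}$ and $\omega^2h_K\norm{z-z_c}_K$. All three are $\le C_{\rm conf}h\snorm{z}_{H^2}$ times powers of $C_\Omega$ coming from $\omega^2h\le C_\Omega$, and the conforming interpolation estimate is used again for each.

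A secondary point is to make sure the interpolant is taken in $\IP^p$ with $p\ge1$ (if $p\le1$ then $\LKh=\{0\}$ and nothing needs correcting). It must also be checked that \cref{lem:loccoerc} is applicable, i.e.\ that $\gamma<1$, which again follows from the hypothesis with $C_2>0$.
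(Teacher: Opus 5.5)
Your argument is correct, but it bounds $\norm{\ATh z_c}_{\Qh'}$ by a different route than the paper.

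\textbf{The paper's route.} It takes specifically the piecewise linear conforming interpolant $z_h^c\in\IP^1_c(\Th)$. This function is harmonic on each element, so $A_K z_h^c=-\omega^2h_Kz_h^c$ exactly, and the correction $w_h$ only has to compensate the mass term. This gives $\dgonorm{w_h}\le 2\sqrt{\const{pf}}\,\cA^{-1}\omega^2h\norm{z_h^c}_{\Th}$. Then $\norm{z_h^c}_\Th\le\omega^{-1}\norm{z}_{V,\omega}+Ch^2\snorm{z}_{H^2(\Omega)}$ together with $\omega^2h^2\le C_\Omega^2$ yields the stated constants literally; this is where the factor $C_\Omega^2$ comes from.

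\textbf{Your route.} You allow an arbitrary conforming interpolant and split $\ATh z_c=\ATh(z_c-z)+\ATh z$. The first part is handled by \cref{lem:AK_cont}, the second by a direct bound on the local Helmholtz residual of $z$. This reproduces the $\const{h2appra}^{(V)}$-term exactly. Your $H^2$-coefficient, however, comes out as $C_{\rm conf}+\CD c_{A,0}^{-1}C_2^{-1}\bigl(\CA C_{\rm conf}+2\sqrt{d\,\const{pf}}\bigr)$ rather than $C_{\rm conf}\bigl(1+2\CD\sqrt{\const{pf}}\,c_{A,0}^{-1}C_2^{-1}C_\Omega^2\bigr)$.

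\textbf{The constant bookkeeping.} Your remark that each piece is $C_{\rm conf}h\snorm{z}_{H^2(\Omega)}$ times powers of $C_\Omega$ is not literally true. The gradient term and the $\Delta z$ term carry no factor of $C_\Omega$, and the $\Delta z$ term is not proportional to $C_{\rm conf}$. Since $C_{\rm conf}$ is only specified through its dependencies (the paper itself enlarges it), you can still reach the stated form. Replace $C_{\rm conf}$ by $\max\{C_{\rm conf},(\CA C_{\rm conf}+2\sqrt{d\,\const{pf}})/(2\sqrt{\const{pf}}\,C_\Omega^2)\}$, which still depends only on admissible quantities.

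\textbf{What each route buys.} The paper's route is shorter and needs neither \cref{lem:AK_cont} nor $\norm{\Delta z}_\Omega\le\sqrt d\,\snorm{z}_{H^2(\Omega)}$. Yours is more general: it shows the Trefftz correction costs at most $\CD\cA^{-1}\bigl(\CA\tdgonorm{z-z_c}+\norm{\ATh z}_{\Qh'}\bigr)$. Hence for $z$ with vanishing local Helmholtz residual you get quasi-optimal Trefftz approximation from a polynomial approximant of any degree.
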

\begin{proof}
By standard conforming approximation theory, for any $z\in H^2(\Omega)$ there exists
$z_h^{c}\in\IP^1_c(\Th)$ such that
\begin{align}
\norm{z-z_h^{c}}_{\Th} &\le C_\dom h^2 \snorm{z}_{H^2(\Omega)}, \label{eq:h2approx:L2}\\
\|\nabla(z-z_h^{c})\|_{\Th} &\le C_\dom h \snorm{z}_{H^2(\Omega)}. \label{eq:h2approx:H1}
\end{align}
Since $z-z_h^{c}\in H^1(\Omega)$ is conforming, it has no jump terms. Using
\eqref{eq:h2approx:L2}--\eqref{eq:h2approx:H1}, the trace inequality \eqref{tr}, and \eqref{ass:oh},
there exists a constant $C_{\rm conf}>0$, depending only on $C_\dom$, $C_\Omega$,
$\const{tr}$, and the mesh-shape constants, such that
\begin{equation}\label{eq:tdgo_conforming_est}
\tdgonorm{z-z_h^{c}}\ \le\ C_{\rm conf}\, h \snorm{z}_{H^2(\Omega)}.
\end{equation}

Let $w_h\in \Lh$ be defined elementwise by
\[
A_K w_h = A_K z_h^{c} \quad\text{in }\QKh' \qquad \forall K\in\Th.
\]
Since $z_h^{c}|_K\in \IP^1(K)$ is harmonic, $-\Delta z_h^{c}=0$ on each element, hence
\[
A_K z_h^{c} = -\omega^2 h_K z_h^{c}.
\]
Using the local coercivity \eqref{eq:loccoerc} elementwise and summing over $K\in\Th$, we obtain
\begin{equation}\label{eq:wh_dgo}
\dgonorm{w_h}\ \le\ \cA^{-1} \|\ATh z_h^{c}\|_{\Qh'}
= \cA^{-1} \omega^2 \|h_K z_h^{c}\|_{\Qh'}.
\end{equation}
Moreover, by definition of the dual norm and Cauchy--Schwarz,
\[
\|h_K z_h^{c}\|_{\QKh'}
\le h_K\|z_h^{c}\|_{K}\sup_{q_h\neq0}\frac{\|q_h\|_K}{\|q_h\|_{\QKh}}
\le 2\sqrt{\const{pf}}\, h_K\|z_h^{c}\|_{K}.
\]
Summing over $K$ yields
\begin{equation}\label{eq:hKzh_Qdual_global}
\|h_K z_h^{c}\|_{\Qh'} \le 2\sqrt{\const{pf}}\, h \|z_h^{c}\|_{\Th}.
\end{equation}
Combining \eqref{eq:wh_dgo} and \eqref{eq:hKzh_Qdual_global} gives
\begin{equation}\label{eq:whzh}
\dgonorm{w_h}\ \le\ 2\sqrt{\const{pf}}\,\cA^{-1}\omega^2 h \|z_h^{c}\|_{\Th}.
\end{equation}

Set $\tilde z_h:=z_h^{c}-w_h$. Then $\tilde z_h\in\ITh$ by construction and, using \eqref{cd},
\begin{align*}
\tdgonorm{z-\tilde z_h}
&\le \tdgonorm{z-z_h^{c}} + \tdgonorm{w_h}
\le \tdgonorm{z-z_h^{c}} + \CD \dgonorm{w_h}
\\
&\le C_{\rm conf} h \snorm{z}_{H^2(\Omega)}
+ 2\CD \sqrt{\const{pf}}\,\cA^{-1} \omega^2 h \|z_h^{c}\|_{\Th}.
\end{align*}
By the triangle inequality and \eqref{eq:h2approx:L2},
\[
\|z_h^{c}\|_{\Th}\le \|z\|_{\Th}+\|z-z_h^{c}\|_{\Th}
\le \omega^{-1}\|z\|_{V,\omega} + C_\dom h^2 \snorm{z}_{H^2(\Omega)},
\]
since $\|z\|_{V,\omega}^2=\|\nabla z\|_\Omega^2+\omega^2\|z\|_\Omega^2\ge \omega^2\|z\|_\Omega^2$.
Inserting this and, if necessary, enlarging $C_{\rm conf}$ so that
$C_{\rm conf}\ge C_\dom$, we obtain
\begin{align*}
\tdgonorm{z-\tilde z_h}
&\le C_{\rm conf} h \snorm{z}_{H^2(\Omega)}
+ 2\CD \sqrt{\const{pf}}\,\cA^{-1} \omega^2 h
\Bigl(\omega^{-1}\|z\|_{V,\omega} + C_\dom h^2 \snorm{z}_{H^2(\Omega)}\Bigr)
\\
&\le \Bigl(1+2\CD \sqrt{\const{pf}}\,\cA^{-1} \omega^2 h^2\Bigr)C_{\rm conf} h \snorm{z}_{H^2(\Omega)}
+ 2\CD \sqrt{\const{pf}}\,\cA^{-1} \omega h \|z\|_{V,\omega}.
\end{align*}

Finally we make $\cA^{-1}$ explicit. By \cref{lem:loccoerc},
$
\cA=c_{A,0}(1-\gamma),
\text{ and }
\gamma\le \frac{2\sqrt{\const{pf}}}{c_{A,0}} \omega h.
$
Using \eqref{ass:oh} gives $\omega h\le \sqrt{C_\Omega h}$ and hence
\[
\gamma\le \frac{2\sqrt{\const{pf}}}{c_{A,0}} \sqrt{C_\Omega h}.
\]
By the assumption
$
1-\frac{2\sqrt{\const{pf}}}{c_{A,0}}\sqrt{C_\Omega h}\ge C_2
$
we have $1-\gamma\ge C_2$, and therefore
\begin{equation}\label{eq:cAinv}
\cA^{-1}\le c_{A,0}^{-1}C_2^{-1}.
\end{equation}
Moreover, \eqref{ass:oh} implies $\omega^2 h^2\le C_\Omega^2$.
Inserting these bounds yields \eqref{h2appra} with the stated constants
(and with $z_h:=\tilde z_h$).
\end{proof}

\begin{lemma}[ $T$-coercivity on $\ITh$]\label{lem:Tcoercive}
Assume the assumptions of Lemma~\ref{lem:h2approx}.
Assume moreover $\alpha>\const{itr}^2$ and set
\[
C_\alpha:=\frac{\alpha-\const{itr}^2}{1+\alpha}>0.
\]
Let $\theta:=-\pi/4$ and define
$
\delta_h:=2\sqrt2 \Ca C_\dom (\const{h2appra}^{(2)}+\const{h2appra}^{(V)}) (\omega h) (1+|\omega|).
$
Assume in addition that $\delta_h<C_\alpha$.
Then there exists a linear injective operator $T_h:\ITh\to V_h$ with
$\dgonorm{T_hu_h}\le \dgonorm{u_h}$ such that
\[
\Re\bigl(e^{-i\theta}a_h(u_h,T_h u_h)\bigr)\ \ge\ \ca \dgonorm{u_h}^2,
\qquad
\ca:=\frac{C_\alpha-\delta_h}{\sqrt2(1+C_T)},
\]
where
$
C_T:=2C_\dom\Bigl(|\omega|+(\const{h2appra}^{(2)}+\const{h2appra}^{(V)}) C_\Omega(1+|\omega|)\Bigr).
$
\end{lemma}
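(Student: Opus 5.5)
The plan is a Schatz-type construction of $T_h$: correct the sign-indefinite $L^2$ part of $a_h$ through a dual Helmholtz solution approximated in $\ITh$. Two identities drive it. For $u_h\in\ITh$ we have $\Re a_h(u_h,u_h)=\|\nabla u_h\|_\Th^2-\omega^2\|u_h\|_\Th^2+\alpha\frac{p^2}{h}\|\jmp{u_h}\|^2_{\Fhi}-2\Re(\avg{\nabla u_h\cdot n},\jmp{u_h})_{\Fhi}$, and $\Im a_h(u_h,u_h)=\omega\|u_h\|^2_{\Fhb}$. By \eqref{itr} and Young, $2|(\avg{\nabla u_h\cdot n},\jmp{u_h})|\le \const{itr}^2\|\nabla u_h\|^2_\Th+\frac{p^2}{h}\|\jmp{u_h}\|^2$ (up to normalisation of the constant). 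Together with the boundary term from the imaginary part, this gives a G\aa rding inequality
\[
\Re\bigl(e^{-i\theta}a_h(u_h,u_h)\bigr)=\tfrac1{\sqrt2}\bigl(\Re a_h+\Im a_h\bigr)(u_h,u_h)\ \ge\ \tfrac{C_\alpha}{\sqrt2}\dgonorm{u_h}^2-\tfrac{1}{\sqrt2}\,2\omega^2(1+\alpha)\|u_h\|^2_\Th .
\]
I expect this to follow after dividing by $1+\alpha$, which is where $C_\alpha$ appears. I would verify the precise constant but commit to this form.

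To absorb the negative $L^2$ term, let $z\in H^2(\Omega)$ be the solution of the adjoint Helmholtz problem with data $2\omega^2 u_h$ (suitably scaled), so that $a(v,z)=2\omega^2(v,u_h)$. By \cref{lem:regularity}, $\norm{z}_{V,\omega}\le C_\dom 2\omega^2\|u_h\|$ and $|z|_{H^2}\le C_\dom(1+|\omega|)2\omega^2\|u_h\|$. Let $z_h\in\ITh$ be given by \cref{lem:h2approx}, and define $T_hu_h:=\kappa(u_h+e^{i\theta'}z_h)$. Here the phase $e^{i\theta'}$ is chosen so that $e^{-i\theta}$ times the added term produces $+2\omega^2\|u_h\|^2/\sqrt2$ in the real part, and the normalisation $\kappa=(1+C_T)^{-1}$ enforces $\dgonorm{T_hu_h}\le\dgonorm{u_h}$. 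The map is linear because $z$ depends linearly on $u_h$ and the approximation of \cref{lem:h2approx} is linear. It is injective since its coercivity forces $T_hu_h\neq0$ for $u_h\ne0$. The bound $\dgonorm{z_h}\le C_T\dgonorm{u_h}$ comes from the triangle inequality $\dgonorm{z_h}\le\norm{z}_{V,\omega}+\tdgonorm{z-z_h}$ combined with the regularity bounds and (A0) ($\omega h\le C_\Omega$, $\omega\|u_h\|\le\dgonorm{u_h}$). This reproduces the stated $C_T$.

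The key step is adjoint consistency: since $z\in H^2$ has no jumps, $a_h(u_h,z)=a(u_h,z)$ holds elementwise after integration by parts, which equals $2\omega^2\|u_h\|^2$. Hence
\[
a_h(u_h,z_h)=2\omega^2\|u_h\|^2-a_h(u_h,z-z_h).
\]
Then \cref{lem:ah_continuity}, with the roles handled by the symmetric structure (bound $|a_h(u_h,z-z_h)|\le\Ca\tdgonorm{u_h}$-type, or equivalently apply it to the adjoint form with $u_h$ discrete), gives $|a_h(u_h,z-z_h)|\le \Ca\dgonorm{u_h}\tdgonorm{z-z_h}$. By \cref{lem:h2approx} and the regularity estimates this is $\le \Ca(\const{h2appra}^{(2)}+\const{h2appra}^{(V)})\,h\,C_\dom(1+|\omega|)\,2\omega^2\|u_h\|\,\dgonorm{u_h}$, which is $\le \delta_h\dgonorm{u_h}^2/\sqrt2$ after using $\omega\|u_h\|\le\dgonorm{u_h}$. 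Summing the two contributions, the $L^2$ terms cancel and $\Re(e^{-i\theta}a_h(u_h,T_hu_h))\ge\kappa(C_\alpha-\delta_h)\dgonorm{u_h}^2/\sqrt2$, which is the claim.

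The main obstacle is this last step. Continuity of $a_h$ is stated with the $\tdgonorm{\cdot}$ norm on the first argument, while the nonpolynomial function here, $z-z_h$, sits in the second slot. I would handle it by redoing the Cauchy–Schwarz argument of \cref{lem:ah_continuity} with the roles swapped, using \eqref{cd} for $u_h$. A secondary obstacle is getting the phases (the rotation by $\theta=-\pi/4$ and the factor attached to $z_h$) consistent, so that the correction term is genuinely real and positive after rotation.
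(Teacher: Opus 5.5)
Your construction is the paper's: $T_hu_h=(1+C_T)^{-1}(u_h+z_h)$, where your phase is simply $\theta'=0$. The ingredients match as well: $z$ is the adjoint solution for data $2\omega^2u_h$, $z_h\in\ITh$ comes from \cref{lem:h2approx}, adjoint consistency gives $a_h(u_h,z)=2\omega^2\|u_h\|_\Omega^2$, and the normalisation and the bound on $C_T$ are the same. Your remark that \cref{lem:ah_continuity} must be applied with the arguments exchanged is correct. The paper uses $|a_h(u_h,z-z_h)|\le\Ca\dgonorm{u_h}\tdgonorm{z-z_h}$ without comment, and since the Cauchy--Schwarz argument is symmetric in the two flux terms, the same $\Ca$ works.

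The step that does not work as you wrote it is the G\aa rding inequality. Set $a=\|\nabla u_h\|_\Th$ and $b=\frac{p}{\sqrt h}\|\jmp{u_h}\|_{\Fhi}$. The unweighted split $2\const{itr}ab\le\const{itr}^2a^2+b^2$ leaves $(1-\const{itr}^2)a^2$, which gives no gradient control once $\const{itr}\ge1$. The mass coefficient $2(1+\alpha)\omega^2$ that you commit to is also incompatible with the data $2\omega^2u_h$. The correction then only adds $\frac{2}{\sqrt2}\omega^2\|u_h\|_\Omega^2$, leaving $-\frac{2\alpha}{\sqrt2}\omega^2\|u_h\|_\Omega^2$, so the claimed cancellation fails. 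Rescaling the data by $1+\alpha$ would repair this but would multiply $\delta_h$ and $C_T$ by $1+\alpha$. The paper instead applies $a^2-2\gamma ab+\alpha b^2\ge\frac{\alpha-\gamma^2}{1+\alpha}(a^2+b^2)$ with $\gamma=\const{itr}$ to the gradient/jump pair only, and keeps the mass term separate:
\[
(\Re+\Im)\,a_h(u_h,u_h)\ \ge\ C_\alpha\Bigl(\|\nabla u_h\|_\Th^2+\frac{p^2}{h}\|\jmp{u_h}\|_{\Fhi}^2\Bigr)-\omega^2\|u_h\|_\Omega^2+\omega\|u_h\|_{\Fhb}^2 .
\]
Adding $a_h(u_h,z)=2\omega^2\|u_h\|_\Omega^2$ leaves $+\omega^2\|u_h\|_\Omega^2$, and since $C_\alpha<1$ this yields $C_\alpha\dgonorm{u_h}^2$. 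From there your estimates reproduce $\delta_h$ and $\ca$ exactly.

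Two further slips are inherited from the paper rather than introduced by you.

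First, with $\theta=-\pi/4$ one has $e^{-i\theta}=e^{i\pi/4}$ and $\sqrt2\,\Re(e^{i\pi/4}w)=\Re w-\Im w$, not $\Re w+\Im w$. So in the identity you and the paper use, the boundary term $\omega\|u_h\|_{\Fhb}^2$ actually enters with the wrong sign. A phase on $z_h$ alone cannot fix this. A global unimodular factor can, for example $T_hu_h:=i(1+C_T)^{-1}(u_h+z_h)$, using antilinearity of $a_h$ in its second argument. Alternatively, take $\theta=\pi/4$.

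Second, $\dgonorm{z}$ contains $\omega\|z\|_{\partial\Omega}^2$, which is not part of $\norm{z}_{V,\omega}$. Testing the adjoint problem with $v=z$ and taking imaginary parts gives $\omega\|z\|_{\partial\Omega}^2\le 2\omega^2\|z\|_\Omega\|u_h\|_\Omega\le 4C_\dom\,\omega\,\dgonorm{u_h}^2$. This adds a term $2\sqrt{C_\dom\,\omega}$ to $C_T$ but changes nothing structurally.
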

\begin{proof}
Fix $\theta=-\pi/4$, so that for any $v$
\begin{equation}\label{eq:phase_id}
\sqrt2 \Re\bigl(e^{-i\theta}a_h(u_h,v)\bigr)=\Re a_h(u_h,v)+\Im a_h(u_h,v).
\end{equation}
Let $u_h\in\ITh$ be given and let $z\in H^1(\Omega)$ be arbitrary. Using Cauchy--Schwarz,
the inverse trace inequality \eqref{itr} applied to $\nabla u_h\cdot n$, and the elementary inequality
$a^2-2\gamma ab+\alpha b^2\ge\frac{\alpha-\gamma^2}{1+\alpha}(a^2+b^2)$ with $\gamma=\const{itr}$, we obtain
\begin{align*}
\Re a_h(u_h,u_h+z)
&\ge
C_\alpha\Bigl(\|\nabla u_h\|_\Th^2+\frac{p^2}{h}\|\jmp{u_h}\|_{\Fhi}^2\Bigr)
-\omega^2\|u_h\|_\Omega^2
+\Re a_h(u_h,z),
\\
\Im a_h(u_h,u_h+z)&=\omega\|u_h\|_{\Fhb}^2+\Im a_h(u_h,z).
\end{align*}
Choose $z\in H^1(\Omega)$ as the solution of the adjoint problem
\begin{equation}\label{eq:z_adj}
a(v,z)=2\omega^2 (v,u_h)_\Omega \qquad \forall v\in H^1(\Omega).
\end{equation}
By adjoint consistency, $a_h(v_h,z)=2\omega^2(v_h,u_h)_\Omega$ for all $v_h\in\ITh$, hence
$a_h(u_h,z)=2\omega^2\|u_h\|_\Omega^2\in\mathbb R$. Therefore,
\begin{align}
(\Re+\Im)a_h(u_h,u_h+z)
&\ge
C_\alpha\Bigl(\|\nabla u_h\|_\Th^2+\frac{p^2}{h}\|\jmp{u_h}\|_{\Fhi}^2\Bigr)
+\omega^2\|u_h\|_\Omega^2
+\omega\|u_h\|_{\Fhb}^2
\nonumber\\
&\ge C_\alpha \dgonorm{u_h}^2. \label{eq:baseline}
\end{align}

Let $z_h\in\ITh$ be the approximant produced by Lemma~\ref{lem:h2approx} (with the fixed constructive choice,
so that $z\mapsto z_h$ is linear), and define
\[
\widetilde T_hu_h:=u_h+z_h \in V_h .
\]
Using \eqref{eq:phase_id}, $|\Re w+\Im w|\le \sqrt2 |w|$, the continuity of $a_h$, and \eqref{eq:baseline},
\begin{align*}
\sqrt2 \Re\bigl(e^{-i\theta}a_h(u_h,\widetilde T_hu_h)\bigr)
&=(\Re+\Im)a_h(u_h,u_h+z_h)
\\
&\ge (\Re+\Im)a_h(u_h,u_h+z)-|(\Re+\Im)a_h(u_h,z-z_h)|
\\
&\ge C_\alpha \dgonorm{u_h}^2-\sqrt2 |a_h(u_h,z-z_h)|
\\
&\ge C_\alpha \dgonorm{u_h}^2-\sqrt2 \Ca \dgonorm{u_h} \tdgonorm{z-z_h}.
\end{align*}
Lemma~\ref{lem:h2approx} gives
\[
\tdgonorm{z-z_h}
\le \const{h2appra}^{(2)} h \snorm{z}_{H^2(\Omega)}
     +\const{h2appra}^{(V)} \omega h \|z\|_{V}.
\]
For $z$ solving \eqref{eq:z_adj}, Lemma~\ref{lem:regularity} (with $f=2\omega^2u_h$, $g=0$) yields
\[
\|z\|_{V,\omega}\le 2C_\dom \omega^2\|u_h\|_\Omega,
\qquad
\snorm{z}_{H^2(\Omega)}\le 2C_\dom(1+|\omega|) \omega^2\|u_h\|_\Omega.
\]
Using $\|z\|_{V}\le \|z\|_{V,\omega}$ and $\omega\|u_h\|_\Omega\le \dgonorm{u_h}$, we obtain
\[
\tdgonorm{z-z_h}
\le 2C_\dom (\const{h2appra}^{(2)}+\const{h2appra}^{(V)})(\omega h)(1+|\omega|) \dgonorm{u_h}.
\]

We now take care of the normalization of $\widetilde T_h$.
Using triangle inequality 
\[
\dgonorm{z_h}\le \dgonorm{z}+\tdgonorm{z-z_h}.
\]
Moreover, $\|z\|_{V,\omega}\le 2C_\dom \omega^2\|u_h\|_\Omega\le 2C_\dom|\omega| \dgonorm{u_h}$,
and the estimate for $\tdgonorm{z-z_h}$ above gives
\[
\dgonorm{z_h}\le 2C_\dom\Bigl(|\omega|+(\const{h2appra}^{(2)}+\const{h2appra}^{(V)})(\omega h)(1+|\omega|)\Bigr)\dgonorm{u_h}.
\]
Using \eqref{ass:oh}, i.e. $(1+\omega^2)h\le C_\Omega$, implies $\omega h\le C_\Omega$, hence
\[
\dgonorm{\widetilde T_hu_h}\le(1+C_T)\dgonorm{u_h}.
\]
Finally set $T_h:=(1+C_T)^{-1}\widetilde T_h$. Then $\dgonorm{T_hu_h}\le\dgonorm{u_h}$ and
\[
\Re \bigl(e^{-i\theta}a_h(u_h,T_hu_h)\bigr)
=\frac{1}{1+C_T}\Re \bigl(e^{-i\theta}a_h(u_h,\widetilde T_hu_h)\bigr)
\ge \frac{C_\alpha-\delta_h}{\sqrt2(1+C_T)}\dgonorm{u_h}^2,
\]
which is the claimed $T$-coercivity.
\end{proof}

\subsection{Quasi-best approximation}\label{sec:quasi_best}

In the previous subsection we have verified the assumptions of \cref{cor:cea}. This allows us to formulate the following quasi-best approximation result. 

\begin{theorem}\label{thm:best_approx}
Assume \textup{(A0)}--\textup{(A3)}.
Let $\CD,\,\cA,\,\CA,\,\ca,\,\Ca$ be as in \cref{lem:loccoerc,lem:AK_cont,lem:ah_continuity,lem:Tcoercive}, and let $u_h\in V_h$ solve \eqref{eq:PDEh} for the Helmholtz problem.
Assume $u\in H^2(\Th)$ so that $\tdgonorm{u-v_h}$ is finite.
Then
\begin{equation}\label{eq:qopt}
\dgonorm{u-u_h}
\le
(1+C_{\cea}^{(1)}) 
\inf_{v_h\in V_h}\tdgonorm{u-v_h},
\end{equation}
with $C_{\cea}^{(1)}$ as in \cref{cor:cea}.
\medskip
Assume in addition that $u\in H^{s+1}(\Omega)$ for some integer $1\le s\le p$.
Assume that $\Th$ admits a simplex covering $\Th^\sharp$ satisfying Definition~4.27 and Assumption~4.28 of \cite{CDG22}, and let $E:H^{s+1}(\Omega)\to H^{s+1}(\IR^d)$ be the extension operator from Theorem~4.30 therein with stability constant $C_E$.
Then there exists $C_{\rm app}>0$, independent of $h$, $\omega$ and $p$, such that
\begin{eqs}\label{eq:hp_final}
&\dgonorm{u-u_h}\\
&\qquad\le
(1+C_{\cea}^{(1)}) C_{\rm app}^{1/2}
\Biggl(
\sum_{K\in\Th}
\Bigl(\frac{h_K}{p}\Bigr)^{2s}
\Bigl[
(1+p)
+\omega^2\Bigl(\frac{h_K}{p}\Bigr)^{2}
+\omega \Bigl(\frac{h_K}{p}\Bigr)
\Bigr] 
|u|_{H^{s+1}(K)}^2
\Biggr)^{1/2}.
\end{eqs}
The constant $C_{\rm app}$ depends only on $d$, the shape-regularity of the covering simplices, the covering
constants $O_\Omega$ and $C_{\rm diam}$, the extension constant $C_E$, the approximation constants in
Lemma~4.31 of \cite{CDG22}, and the mesh-shape constants entering \eqref{tr} and the face-size comparability in
\textup{(A2)}.
\end{theorem}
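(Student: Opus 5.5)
Part one follows from \cref{cor:cea}, where the two consistency terms in \eqref{eq:cea} vanish. The assumptions of \cref{cor:cea} hold with $\dgnorm{\cdot}=\dgonorm{\cdot}$ and $\tdgnorm{\cdot}=\tdgonorm{\cdot}$:
\begin{itemize}
\item \eqref{eq:Cstar} is \eqref{cd};
\item \eqref{eq:A_coerc} is \cref{lem:loccoerc};
\item \eqref{eq:A_cont} is \cref{lem:AK_cont};
\item \eqref{eq:ah_cont} is \cref{lem:ah_continuity};
\item \eqref{eq:ah_coerc} is \cref{lem:Tcoercive}.
\end{itemize}
In \cref{lem:Tcoercive} the phase factor $e^{-i\theta}$ is harmless: replacing $a_h$ by $e^{-i\theta}a_h$ and $\ell_h$ by $e^{-i\theta}\ell_h$ leaves \eqref{eq:PDEh} unchanged and leaves $\Ca$ unchanged. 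The splitting $v_h=v_\IL+v_\IT$ is the one induced by invertibility of $\AK$ on $\LKh$: let $v_\IL$ be the element of $\ILh$ with $\AK v_\IL=\AK v_h$ on each $K$, and set $v_\IT:=v_h-v_\IL\in\ITh$.

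For consistency, take $u\in H^2(\Th)$ solving \eqref{eq:helmholtz}. Then $-\Delta u-\omega^2u=f$ elementwise, so $\ATh u=\ell_\Th$ by \eqref{eq:AK}--\eqref{eq:lK}. Moreover $u\in H^2$ globally (or at least its traces and fluxes are continuous across faces), so $\jmp{u}=0$ and $\jmp{\nabla u\cdot n}=0$. Elementwise integration by parts together with the Robin condition then gives $a_h(u,v)=\ell_h(v)$ for all $v\in V_h$. Hence \eqref{eq:cea} reduces to $\dgonorm{v_h-u_h}\le C_{\cea}^{(1)}\tdgonorm{v_h-u}$. Combining with the triangle inequality $\dgonorm{u-u_h}\le\dgonorm{u-v_h}+\dgonorm{v_h-u_h}$ and $\dgonorm{\cdot}\le\tdgonorm{\cdot}$, then taking the infimum over $v_h$, yields \eqref{eq:qopt}.

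For \eqref{eq:hp_final}, insert into \eqref{eq:qopt} the $hp$ approximant $v_h:=\Pi_p(Eu)|_K$ from \cite[Lemma~4.31]{CDG22}. Each covering simplex $K^\sharp\supset K$ comes with a polynomial satisfying, for $0\le q\le s+1$,
\[
\|Eu-\Pi_p Eu\|_{H^q(K)}\lesssim (h_K/p)^{s+1-q}\,|Eu|_{H^{s+1}(K^\sharp)},
\]
together with a trace analogue. The work is then to bound each term of $\tdgonorm{u-v_h}^2$ elementwise with $e:=u-v_h$:
\begin{itemize}
\item $\|\nabla e\|_K^2\lesssim (h_K/p)^{2s}|u|^2$.
\item $\omega^2\|e\|_K^2\lesssim\omega^2(h_K/p)^{2s+2}|u|^2$.
\item The jump and boundary terms: use \eqref{tr} to get $\|e\|_{\partial K}^2\lesssim h_K^{-1}\|e\|_K^2+\|e\|_K\|\nabla e\|_K\lesssim h_K^{-1}(h_K/p)^{2s+2}+(h_K/p)^{2s+1}$. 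Multiplied by $p^2/h$, this gives $(h_K/p)^{2s}$ times an $O(p)$ factor if $h\simeq h_K$; multiplied by $\omega$, it gives $\omega(h_K/p)^{2s+1}$.
\item The flux term $p^{-2}h_K\|\nabla e\cdot n\|_{\partial K}^2$: apply \eqref{tr} to $\nabla e$, which needs $s\ge1$, giving $(h_K/p)^{2s}$.
\end{itemize}
Summing over $K$ and using the finite overlap $O_\Omega$ together with $\|Eu\|_{H^{s+1}}\le C_E\|u\|_{H^{s+1}}$ produces the bracket in \eqref{eq:hp_final}.

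The main obstacle is the $p$-bookkeeping in the jump penalty term. It needs the sharp $hp$ trace approximation of \cite{CDG22} to get only a factor $(1+p)$ rather than $p^2$, and it needs local quasi-uniformity to replace $h$ by $h_K$. The last step is converting $|Eu|_{H^{s+1}(K^\sharp)}$ back to $|u|_{H^{s+1}(K)}$ via overlap counting. The stated bound does this elementwise, so I would absorb it into $C_{\rm app}$, with the understanding that the patch seminorms are summed.
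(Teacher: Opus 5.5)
Your proposal is correct and follows the paper's own route. Consistency makes the residual terms in \cref{cor:cea} vanish, and the triangle inequality with $\dgonorm{\cdot}\le\tdgonorm{\cdot}$ followed by the infimum gives \eqref{eq:qopt}. Then \eqref{eq:hp_final} comes from inserting the $hp$ approximant of \cite{CDG22}. You additionally spell out what the paper leaves implicit: the phase rotation for \cref{lem:Tcoercive}, the splitting, consistency, and the term-by-term bookkeeping.

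Two small remarks. First, your \eqref{tr}-based computation already yields the factor $(1+p)$ and needs no quasi-uniformity, since $h=\sup_K h_K$ gives $p^2/h\le p^2/h_K$; so the ``main obstacles'' you list are not really obstacles. Second, the patch-to-element seminorm issue you flag is genuine, and the paper's one-line argument passes over it just the same.
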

\begin{proof}
By consistency the residual terms in \cref{cor:cea} vanish, hence for any $v_h\in V_h$,
\[
    \dgonorm{u_h-v_h}\le C_{\cea}^{(1)}\,\tdgonorm{u-v_h}.
\]
Using $u-u_h=(u-v_h)+(v_h-u_h)$, the triangle inequality, and $\dgonorm{\cdot}\le \tdgonorm{\cdot}$ gives
\[
    \dgonorm{u-u_h}\le (1+C_{\cea}^{(1)})\,\tdgonorm{u-v_h},
\]
and taking the infimum yields \eqref{eq:qopt}. The quasi-best approximation \eqref{eq:hp_final} follows from \eqref{eq:qopt} and standard $hp$-approximation theory for polynomials on shape-regular simplices.
\end{proof}

\begin{theorem}[$L^2$-error bound for embedded Trefftz--DG Helmholtz]\label{th:L2_helmholtz}
Let the assumptions of \cref{lem:h2approx} hold, and assume that the Helmholtz solution $u$ is sufficiently regular
so that $\tdgonorm{u-v_h}$ is finite for the approximants $v_h\in V_h$.

Assume moreover that $\Omega$ is either convex or star-shaped with smooth boundary so that the (continuous) dual problem
associated with the right-hand side $\phi\in L^2(\Omega)$ admits a solution $z\in H^2(\Omega)$ satisfying the bounds
\begin{equation}\label{eq:dual_reg_exp}
\|z\|_{V,\omega} \le C_{\rm dual}\,\|\phi\|_{L^2(\Omega)},
\qquad
|z|_{H^2(\Omega)} \le C_{\rm dual}\,(1+\omega)\,\|\phi\|_{L^2(\Omega)},
\end{equation}
for a constant $C_{\rm dual}>0$ independent of $h$ and $\omega$, see \cite[Proposition 8.1.4]{MM95}

Let $u_h\in V_h$ be the embedded Trefftz--DG solution \eqref{eq:PDEh} of the Helmholtz problem.
Then
\begin{equation}\label{eq:L2_helmholtz_bound}
\|u-u_h\|_{L^2(\Omega)}
\ \le\
C_{L^2}\,h(1+\omega)\,\inf_{v_h\in V_h}\tdgonorm{u-v_h},
\end{equation}
with the explicit constant
\begin{equation}\label{eq:CL2_def}
C_{L^2}
:=
\Ca\;\Bigl(C_{\rm reg}^{(1)}+2C_{\rm reg}^{(2)}\Bigr)\;
\Bigl(1+\CD+\CD\,C_{\rm qopt}\Bigr).
\end{equation}
where $\CD$ is the constant from \eqref{cd},
and $ C_{\rm qopt}:=1+C_{\cea}^{(1)} $ is the quasi-best approximation constant from \cref{thm:best_approx}.
\begin{equation}\label{eq:Creg_choice}
C_{\rm reg}^{(1)}=C_{\rm reg}^{(2)}
:= C_{\rm dual}\,\bigl(\const{h2appra}^{(2)}+\const{h2appra}^{(V)}\bigr),
\qquad
h_H:=h(1+\omega),
\end{equation}
with $\const{h2appra}^{(2)},\const{h2appra}^{(V)}$ as in \cref{lem:h2approx}.
\end{theorem}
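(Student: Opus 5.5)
The bound \eqref{eq:L2_helmholtz_bound} will be obtained as a direct instance of \cref{th:aubinnitsche} with $\IH:=L^2(\Omega)$. The three ingredients are consistency, the regularity hypotheses \eqref{eq:H-reg-bound-exp}--\eqref{eq:H-reg-bound2-exp} with the constants \eqref{eq:Creg_choice}, and the quasi-optimality of \cref{thm:best_approx}.

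\textbf{Step 1: the structural assumptions.} The assumptions of \cref{cor:cea} hold by \cref{lem:loccoerc,lem:AK_cont,lem:ah_continuity,lem:Tcoercive} and \eqref{cd}. For \eqref{eq:IH_DG_dom} I will use $\omega\|v\|_\Omega\le\dgonorm{v}$, so $C_{\IH}=\omega^{-1}$; this constant only multiplies the residual term, which vanishes anyway.

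Consistency has two parts. The exact solution satisfies $\AK u=\ell_K$, and $a_h(u,\cdot)=\ell_h$ on $V_h$ because $u\in H^2$ has no jumps and satisfies the Robin condition. Hence both residual terms in \eqref{eq:IH_bound_framework} drop out.

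Adjoint consistency \eqref{eq:a-bit-consistent-exp} for $z\in H^2(\Omega)$ is the usual elementwise integration by parts, since $\jmp{z}=0$ and $\jmp{\nabla z\cdot n}=0$. For adjoint continuity \eqref{eq:a-bit-cont-exp} I will take $\norm{\cdot}_{\Wsh}:=\tdgonorm{\cdot}$. The argument of \cref{lem:ah_continuity} then applies in the second slot with both arguments measured in $\tdgonorm{\cdot}$, because both flux terms are bounded by Cauchy--Schwarz against the $p^{-2}h\|\nabla\cdot n\|^2_{\partial K}$ part of the norm. This gives $C_{\rm adj}=\Ca$, matching the leading factor in \eqref{eq:CL2_def}.

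\textbf{Step 2: the regularity assumptions.} For $\phi\in L^2$, let $z$ solve the dual problem. Since $a$ is coercive-free but well posed, existence comes from \cite{MM95}, which gives \eqref{eq:adjoint-IH-wellposed}. For any $z$ with $a(\cdot,z)\in L^2{}'$, the Riesz representative $\phi$ satisfies $\|\phi\|_\Omega=\sup_v|a(v,z)|/\|v\|_\Omega$.

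\cref{lem:h2approx} then provides $z_h\in\ITh$ (and $\tch\ITh=\ITh$ up to scaling) with
\[
\tdgonorm{z-z_h}\le \const{h2appra}^{(2)}h|z|_{H^2}+\const{h2appra}^{(V)}\omega h\|z\|_{V,\omega}.
\]
Inserting \eqref{eq:dual_reg_exp} gives the bound $C_{\rm dual}(\const{h2appra}^{(2)}+\const{h2appra}^{(V)})\,h(1+\omega)\|\phi\|_\Omega$. This yields \eqref{eq:H-reg-bound-exp} with $h_H=h(1+\omega)$. Since $\ITh\subset W_h=V_h$, the same $z_h$ also gives \eqref{eq:H-reg-bound2-exp}.

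\textbf{Step 3: assembling.} Plugging these constants into \eqref{eq:IH_bound_framework} gives a bound of the form $\Ca(C^{(1)}_{\rm reg}+2C^{(2)}_{\rm reg})h_H\,[\cdots]\inf\tdgonorm{u-v_h}$.

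Rather than accepting the framework's bracket $1+2\CD+\CD C^{(1)}_{\cea}$, I plan to redo its final Aubin--Nitsche step directly. I will write $(u-u_h,\phi)=a_h(u-u_h,z)$ and use Galerkin orthogonality on the Trefftz test space. The remaining terms are controlled by $\tdgonorm{u-u_h}\le\tdgonorm{u-v_h}+\CD\dgonorm{v_h-u_h}$ together with \eqref{eq:qopt}. This produces the bracket $1+\CD+\CD C_{\rm qopt}$ stated in \eqref{eq:CL2_def}.

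\textbf{Main obstacle.} I expect the main obstacle to be Galerkin orthogonality. It holds only against $\tch\ITh$, whereas the approximation is measured in $\Wsh$. This is exactly why both $C^{(1)}_{\rm reg}$ and $C^{(2)}_{\rm reg}$ appear, and why the Trefftz approximability of \cref{lem:h2approx} (rather than plain polynomial approximation) is essential.
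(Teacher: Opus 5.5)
Your proposal is correct and follows the paper's proof. Like the paper, you verify the hypotheses of \cref{th:aubinnitsche} with $\IH=L^2(\Omega)$ (adjoint continuity with $C_{\rm adj}=\Ca$ as in \cref{lem:ah_continuity}, and \eqref{eq:H-reg-bound-exp}--\eqref{eq:H-reg-bound2-exp} from \cref{lem:h2approx} together with \eqref{eq:dual_reg_exp}), let consistency remove the residual terms, and combine with \cref{thm:best_approx}. Your stated reason for redoing the final duality step is mistaken, however: since $C_{\rm qopt}=1+C_{\cea}^{(1)}$, the framework bracket $1+2\CD+\CD C_{\cea}^{(1)}$ is literally $1+\CD+\CD C_{\rm qopt}$, so the paper simply reads \eqref{eq:CL2_def} off \eqref{eq:IH_bound_framework}; your direct argument with $w_h=z_h$ is nonetheless valid and even gives the smaller factor $C_{\rm reg}^{(1)}$ in place of $C_{\rm reg}^{(1)}+2C_{\rm reg}^{(2)}$.
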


\begin{proof}
Set $\IH:=L^2(\Omega)$ and apply \cref{th:aubinnitsche}.
Following the same arguments as in \cref{lem:ah_continuity} implies \eqref{eq:a-bit-cont-exp} with $C_{\rm adj}=\Ca$.

Let $e:=u-u_h$ and let $z$ be the dual solution with right-hand side $\phi=e$, so that
$\sup_{v\in V}\frac{|a(v,z)|}{\|v\|_{L^2(\Omega)}}=\|e\|_{L^2(\Omega)}$.
By \cref{lem:h2approx}, there exists $z_h\in \ITh\subseteq\tch\ITh$ such that
\[
\tdgonorm{z-z_h}
\le \const{h2appra}^{(2)}\,h\,|z|_{H^2(\Omega)}
    +\const{h2appra}^{(V)}\,\omega h\,\|z\|_{V,\omega}.
\]
Using \eqref{eq:dual_reg_exp} with $\phi=e$ and $\omega\le 1+\omega$ yields
\[
\tdgonorm{z-z_h}
\le
C_{\rm dual}\,h(1+\omega)\,\bigl(\const{h2appra}^{(2)}+\const{h2appra}^{(V)}\bigr)\,\|e\|_{L^2(\Omega)}.
\]
Since $\dgonorm{\cdot}\le \tdgonorm{\cdot}$, the same bound holds with $\dgonorm{\cdot}$ in place of $\tdgonorm{\cdot}$.
Thus \eqref{eq:H-reg-bound-exp}--\eqref{eq:H-reg-bound2-exp} hold with the choices \eqref{eq:Creg_choice}.

Finally, the Helmholtz method is consistent, hence the residual terms in \cref{th:aubinnitsche} vanish.
Moreover, \cref{thm:best_approx} gives
\[
\dgnorm{u-u_h}\le C_{\rm qopt}\inf_{v_h\in V_h}\tdgonorm{u-v_h}.
\]
Combining these two facts with \cref{th:aubinnitsche} (in the residual-free case) yields
\eqref{eq:L2_helmholtz_bound} with the constant \eqref{eq:CL2_def}.
\end{proof}
From this result, we see that the $L^2$-error is of optimal order, as in \eqref{eq:hp_final}, with the additional factor of $h(1+\omega)$.

\section{Numerical Experiments}\label{sec:numerics}

For the implementation of the methods we use \texttt{NGSolve} \cite{ngsolve} and \texttt{NGSTrefftz} \cite{ngstrefftz}. Code is available at \cite{stocker_2026_18985575}.

In \cref{sec:hconvergence} we confirm the theoretical convergence rates for $h$--refinement. 
We consider $p$--refinement in \cref{sec:pconvergence} and observe exponential convergence in the preasymptotic regime.
In the last two subsections \cref{sec:largeo,sec:varo} we study performance outside the scope of the theory, particularly the effect of $p$--refinement on the preasymptotic regime and smoothly varying wave numbers.

Throughout, we use simplicial quasi-uniform shape-regular meshes and solve all linear systems by a direct solver.
The penalty parameter is fixed to $\alpha=10$ in all experiments.
For each test, we compare the embedded Trefftz DG method and the standard DG method on the same sequence of meshes and with the same polynomial degree $p$, so that the only difference is the choice of local discrete space.
When plotting the error against \texttt{ndof}, this refers to the total number of global degrees of freedom of the corresponding discrete problem.

\subsection{\texorpdfstring{$h$}{h}-convergence}\label{sec:hconvergence}

In this section, we consider a known analytical solution on the unit square $\Omega = (0,1)^2$. 
We choose the impedance boundary condition $g$ such that the exact solution is given by
\begin{equation}\label{eq:hankel}
u(x,y) = \calH_0^{(1)}\bigl(\omega \,\| (x - x_0, y) \|\bigr),
\end{equation}
where $\calH_0^{(1)}$ denotes the Hankel function of the first kind of order zero and $x_0 = (-0.25,0)$, and we take the wave number $\omega = 10$.
For this solution, the right-hand side $f$ vanishes.

This example provides a convenient benchmark for $h$--convergence, since the exact solution is smooth in $\Omega$ while still exhibiting a genuinely oscillatory Helmholtz behavior.
Moreover, the singularity of the Hankel fundamental solution is located outside the computational domain, so that the observed convergence is not polluted by reduced interior regularity.
On each simplicial quasi-uniform mesh we compare both methods for the same polynomial degree $p$ and the same stabilization parameter.

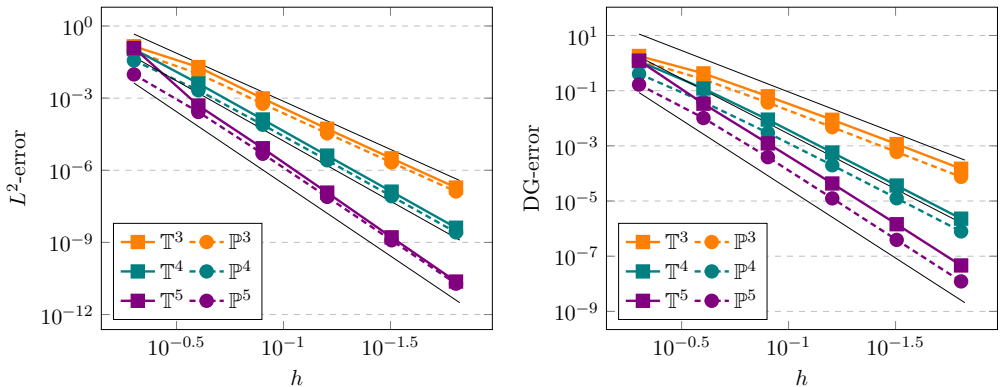
\begin{figure}[ht!]\centering
\resizebox{.9\linewidth}{!}{
\begin{tikzpicture}
	\begin{groupplot}[
	group style={
	group name={my plots},
	group size=2 by 1,
	horizontal sep=2cm,
	},
	ymajorgrids=true,
	grid style=dashed,
	]      
    \nextgroupplot[ymode=log,xmode=log,x dir=reverse, ylabel={$L^2$-error},xlabel={$h$}, cycle list name=paulcolors2, legend columns=2, legend pos=south west]
    \foreach \k in {3,4,5}{
    \addplot+[discard if not={p}{\k},discard if not={method}{etvol}] table [x=h, y=l2error, col sep=comma] {./numex/hankel2d.csv};
    \addplot+[discard if not={p}{\k},discard if not={method}{dgvol}] table [x=h, y=l2error, col sep=comma] {./numex/hankel2d.csv};
    }
    \addlegendimage{solid}
    \addplot[domain=0.015:.5] {exp(-4*ln(1/x)+2.0)};
    \addplot[domain=0.015:.5] {exp(-5*ln(1/x)+0.5)};
    \addplot[domain=0.015:.5] {exp(-6*ln(1/x)-1.3)};
    \legend{$\IT^3$,$\IP^3$,$\IT^4$,$\IP^4$,$\IT^5$,$\IP^5$}%

    \nextgroupplot[ymode=log,xmode=log,x dir=reverse, ylabel={DG-error},xlabel={$h$}, cycle list name=paulcolors2, legend columns=2, legend pos=south west]
    \foreach \k in {3,4,5}{
    \addplot+[discard if not={p}{\k},discard if not={method}{etvol}] table [x=h, y=dgerror, col sep=comma] {./numex/hankel2d.csv};
    \addplot+[discard if not={p}{\k},discard if not={method}{dgvol}] table [x=h, y=dgerror, col sep=comma] {./numex/hankel2d.csv};
    }
    \addlegendimage{solid}
    \addplot[domain=0.015:0.5] {exp(-3*ln(1/x)+4.5)};
    \addplot[domain=0.015:0.5] {exp(-4*ln(1/x)+3.3)};
    \addplot[domain=0.015:0.5] {exp(-5*ln(1/x)+1.0)};
    \legend{$\IT^3$,$\IP^3$,$\IT^4$,$\IP^4$,$\IT^5$,$\IP^5$}%
	\end{groupplot}
\end{tikzpicture}}
\vspace{-0.5em}
\caption{
    Convergence of the embedded Trefftz DG method and the standard DG method for the problem with exact solution \eqref{eq:hankel}.
    Theoretical rates are indicated by the lines, i.e., $\mathcal O(h^{p+1})$ for the $L^2$-error and $\mathcal O(h^{p})$ for the DG-error.
}
\label{fig:hankel}
\end{figure}

In \cref{fig:hankel} we compute the solution for different mesh sizes $h$ and polynomial degrees $p=3,4,5$ and compare the embedded Trefftz DG method with the standard DG method using full polynomial spaces.
We observe in both the $L^2$-error and the DG-error the expected convergence rates of order $\mathcal O(h^{p+1})$ and $\mathcal O(h^{p})$, respectively, for both methods.
Thus, Trefftz constraint retains the asymptotic convergence behavior, while reducing the local approximation space.

\subsection{\texorpdfstring{$p$}{p}-convergence}\label{sec:pconvergence}
We consider the domain $\Omega = (0,1)^2$ with right-hand side and impedance boundary condition such that the exact solution is given by
\begin{equation}\label{eq:sinsin}
u(x,y) = \sin(\pi x)\sin(\pi y),
\end{equation}
setting the wave number to $\omega = 1$.
For this experiment we keep the mesh fixed and increase the polynomial degree $p$, so that the figure isolates the effect of local approximation order.
Since the exact solution is smooth, one expects exponential convergence in the pre-asymptotic regime under $p$--refinement.

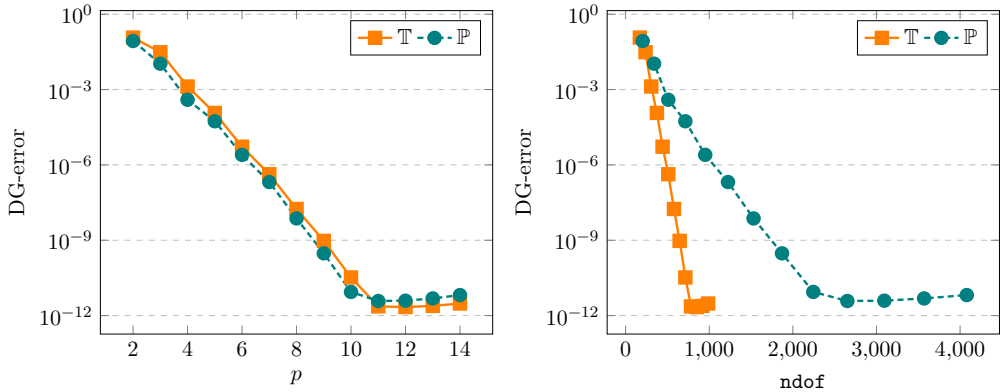
\begin{figure}[ht!]\centering
\resizebox{.9\linewidth}{!}{
\begin{tikzpicture}
	\begin{groupplot}[
	group style={
	group name={my plots},
	group size=2 by 1,
	horizontal sep=2cm,
	},
	ymajorgrids=true,
	grid style=dashed,
	]      
    \nextgroupplot[ymode=log,ylabel={DG-error},xlabel={$p$}, legend columns=3, legend pos=north east, cycle list name=paulcolors1]
    \addplot+[discard if not={method}{etvol},discard if not={hnr}{0}] table [x=p, y=dgerror, col sep=comma] {./numex/sinsin2d.csv};
    \addplot+[discard if not={method}{dgvol},discard if not={hnr}{0}] table [x=p, y=dgerror, col sep=comma] {./numex/sinsin2d.csv};
    \legend{$\IT$,$\IP$}

    \nextgroupplot[ymode=log,ylabel={DG-error},xlabel={\texttt{ndof}}, legend columns=3, legend pos=north east, cycle list name=paulcolors1]
    \addplot+[discard if not={method}{etvol},discard if not={hnr}{0}] table [x=dofs, y=dgerror, col sep=comma] {./numex/sinsin2d.csv};
    \addplot+[discard if not={method}{dgvol},discard if not={hnr}{0}] table [x=dofs, y=dgerror, col sep=comma] {./numex/sinsin2d.csv};
    \legend{$\IT$,$\IP$}

	\end{groupplot}
\end{tikzpicture}}
\vspace{-0.5em}
\caption{
    Results for the problem with exact solution \eqref{eq:sinsin} for the embedded Trefftz DG method and the standard DG method.
    Left: DG-error against polynomial degree $p$. Right: DG-error against number of degrees of freedom.
}
\label{fig:sinsin}
\end{figure}

This is confirmed in \cref{fig:sinsin} for both discretizations, were we consider $p=2,\ldots,14$.
When the error is plotted against the total number of degrees of freedom, the advantage of the embedded Trefftz space becomes more visible: for a given accuracy, the embedded Trefftz discretization requires substantially fewer unknowns than the standard DG method.
Although the lower bound in \cref{lem:nonharmonicNE} exhibits a problematic dependence on $p$, this behaviour is not reflected in the present experiment: both methods show exponential convergence in $p$, as expected for a smooth solution.

\subsection{Large wave numbers} \label{sec:largeo}
We consider the problem \eqref{eq:helmholtz} with $f=0$ and $g$ chosen such that the exact solution is 
\begin{equation}\label{eq:exp}
    u(x,y) = \exp(i\omega (x-y)/\sqrt{2}).
\end{equation}
The domain is the unit disk $\Omega = B_1(0)$.
This experiment follows the setup chosen in \cite{BM19} and is meant to investigate the behavior of the pre-asymptotic region for different polynomial degrees $p$.
For the standard finite element method with full polynomial spaces, it is known that the pre-asymptotic regime improves as the polynomial degree increases; see, e.g., \cite{MS11}.
In view of the improved high-frequency behavior often observed for Trefftz-type discretizations, it is natural to ask whether a similar trend can also be seen for the embedded Trefftz DG method.

We therefore consider the polynomial degrees $p=2,3,4$ and several values of the wave number $\omega$, and plot the error against the number of degrees of freedom per wavelength $N_\lambda$, given by
\[
N_\lambda = \frac{2\pi \sqrt[d]{DOF}}{\omega\sqrt[d]{|\Omega|}}.
\]
This quantity provides a normalized measure of resolution, allowing results for different wave numbers to be compared on the same scale.
The wavelength $\lambda$ and the wave number $\omega$ are related by $\omega=2\pi/\lambda$, and we take $\omega=100,500,750,1000$.

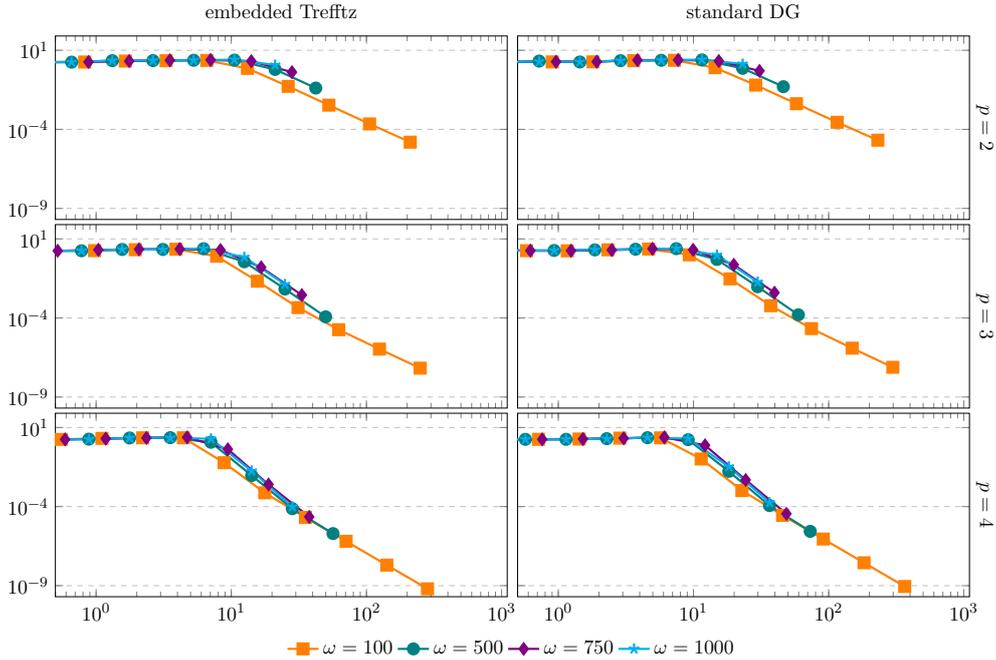
\begin{figure}[ht!]\centering
\resizebox{.9\linewidth}{!}{
\begin{tikzpicture}
	\begin{groupplot}[
	group style={
	group name={my plots},
	group size=2 by 3,
	horizontal sep=0.2cm,
    vertical sep=0.1cm,
    xticklabels at=edge bottom,
    yticklabels at=edge left
	},
    legend style={
    legend columns=6,
    at={(0.5,-0.2)},
    draw=none
    },
	ymajorgrids=true,
	grid style=dashed,
    cycle list name=paulcolors4,
    width=0.7\linewidth,
    height=0.7\linewidth/2,
    xmin=0.5,
    xmax=1100,
    ymin=2e-10,
    ymax=8e1,
	]      
    \def\omegas{100,500,750,1000}
    \nextgroupplot[ymode=log,xmode=log,title={embedded Trefftz}]
    \foreach \o in \omegas{
    \addplot+[discard if not={omega}{\o},discard if not={p}{2},discard if not={method}{etvol}] table [x=dofspwl, y=l2error, col sep=comma] {./numex/dofso2d.csv};
    }
    \nextgroupplot[ymode=log,xmode=log,title={standard DG}]
    \foreach \o in \omegas{
    \addplot+[discard if not={omega}{\o},discard if not={p}{2},discard if not={method}{dgvol}] table [x=dofspwl, y=l2error, col sep=comma] {./numex/dofso2d.csv};
    }
    \nextgroupplot[ymode=log,xmode=log]
    \foreach \o in \omegas{
    \addplot+[discard if not={omega}{\o},discard if not={p}{3},discard if not={method}{etvol}] table [x=dofspwl, y=l2error, col sep=comma] {./numex/dofso2d.csv};
    }
    \nextgroupplot[ymode=log,xmode=log]
    \foreach \o in \omegas{
    \addplot+[discard if not={omega}{\o},discard if not={p}{3},discard if not={method}{dgvol}] table [x=dofspwl, y=l2error, col sep=comma] {./numex/dofso2d.csv};
    }
    \nextgroupplot[ymode=log,xmode=log]
    \foreach \o in \omegas{
    \addplot+[discard if not={omega}{\o},discard if not={p}{4},discard if not={method}{etvol}] table [x=dofspwl, y=l2error, col sep=comma] {./numex/dofso2d.csv};
    }
    \nextgroupplot[ymode=log,xmode=log]
    \foreach \o in \omegas{
    \addplot+[discard if not={omega}{\o},discard if not={p}{4},discard if not={method}{dgvol}] table [x=dofspwl, y=l2error, col sep=comma] {./numex/dofso2d.csv};
    }
    \legend{$\omega=100$,$\omega=500$,$\omega=750$,$\omega=1000$}

	\end{groupplot}
    \node[right = 1cm of my plots c2r1.east,above=0cm of my plots c2r1.east,rotate=-90] {$p=2$};
    \node[right = 1cm of my plots c2r1.east,above=0cm of my plots c2r2.east,rotate=-90] {$p=3$};
    \node[right = 1cm of my plots c2r1.east,above=0cm of my plots c2r3.east,rotate=-90] {$p=4$};
\end{tikzpicture}}
\vspace{-0.5em}
\caption{
    Error against number of degrees of freedom per wavelength for different wave numbers $\omega$ and polynomial degrees $p$ for the problem with exact solution \eqref{eq:exp}.
}
\label{fig:exp}
\end{figure}

The results in \cref{fig:exp} are promising.
For both the embedded Trefftz and the standard DG discretization, the curves indicate that the pre-asymptotic region shifts to smaller values of $N_\lambda$ as $p$ increases.
In other words, higher-order discretizations appear to resolve the oscillatory solution accurately with fewer degrees of freedom per wavelength.
The same qualitative behavior is visible for the embedded Trefftz method, suggesting that the favorable $p$--dependence is retained also under the Trefftz constraint.

\subsection{Smoothly varying wave number}\label{sec:varo}

We consider the Helmholtz problem on the unit square $\Omega=(0,1)^2$ with wave number $\omega=5+\sin(x)+y^2$.
The exact solution is given by 
\begin{equation}\label{eq:varo}
    u(x,y)=\exp(i\omega x y).  
\end{equation}
The source term $f$ and the impedance boundary condition $g$ are chosen accordingly.
While this problem does not fall into the scope of our analysis, it provides a useful test of robustness, since the local approximation spaces are no longer aligned with a constant-coefficient Helmholtz operator.
Nevertheless, the results shown in \cref{fig:varo} indicate that the embedded Trefftz DG method still exhibits the expected convergence behavior.
In particular, the observed $L^2$- and DG-errors remain comparable to those of the standard DG method for the same polynomial degree, which suggests that the approach is not restricted to the constant-wave-number setting covered by the theory.

\begin{figure}[ht!]\centering
\resizebox{.9\linewidth}{!}{
\begin{tikzpicture}
	\begin{groupplot}[
	group style={
	group name={my plots},
	group size=2 by 1,
	horizontal sep=2cm,
	},
	ymajorgrids=true,
	grid style=dashed,
	]      
    \nextgroupplot[ymode=log,xmode=log,x dir=reverse, ylabel={$L^2$-error},xlabel={$h$}, cycle list name=paulcolors2, legend columns=2, legend pos=south west]
    \foreach \k in {3,4,5}{
    \addplot+[discard if not={p}{\k},discard if not={method}{etvol}] table [x=h, y=l2error, col sep=comma] {./numex/varo2d.csv};
    \addplot+[discard if not={p}{\k},discard if not={method}{dgvol}] table [x=h, y=l2error, col sep=comma] {./numex/varo2d.csv};
    }
    \addlegendimage{solid}
    \addplot[domain=0.03:0.5] {exp(-4*ln(1/x)-0.0)};
    \addplot[domain=0.03:0.5] {exp(-5*ln(1/x)-1.0)};
    \addplot[domain=0.03:0.5] {exp(-6*ln(1/x)-2.5)};
    \legend{$\IT^3$,$\IP^3$,$\IT^4$,$\IP^4$,$\IT^5$,$\IP^5$}%

    \nextgroupplot[ymode=log,xmode=log,x dir=reverse, ylabel={DG-error},xlabel={$h$}, cycle list name=paulcolors2, legend columns=2, legend pos=south west]
    \foreach \k in {3,4,5}{
    \addplot+[discard if not={p}{\k},discard if not={method}{etvol}] table [x=h, y=dgerror, col sep=comma] {./numex/varo2d.csv};
    \addplot+[discard if not={p}{\k},discard if not={method}{dgvol}] table [x=h, y=dgerror, col sep=comma] {./numex/varo2d.csv};
    }
    \addlegendimage{solid}
    \addplot[domain=0.03:0.5] {exp(-3*ln(1/x)+4.0)};
    \addplot[domain=0.03:0.5] {exp(-4*ln(1/x)+1.5)};
    \addplot[domain=0.03:0.5] {exp(-5*ln(1/x)+0.5)};
    \legend{$\IT^3$,$\IP^3$,$\IT^4$,$\IP^4$,$\IT^5$,$\IP^5$}%
	\end{groupplot}
\end{tikzpicture}}
\vspace{-0.5em}
\caption{
    Results for the problem with exact solution \eqref{eq:varo} with smoothly varying wave number. 
    The straight lines indicate the expected convergence rates of order $\mathcal O(h^{p+1})$ for the $L^2$-error and $\mathcal O(h^{p})$ for the DG-error.
}
\label{fig:varo}
\end{figure}
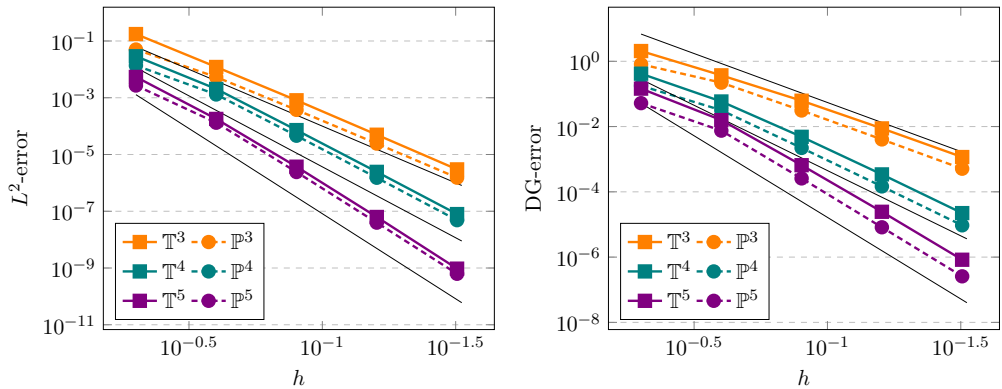

Overall, the numerical experiments confirm the main qualitative features predicted by the analysis: optimal algebraic convergence under $h$--refinement and rapid convergence under $p$--refinement.
At the same time, they indicate that the embedded Trefftz discretization can achieve accuracies comparable to those of the standard DG method with a significantly smaller number of degrees of freedom.
Finally, the last two tests suggest that the method remains effective also in regimes that are not yet covered by the present theory.

\section*{Acknowledgements}
\sloppy{
This research was funded in part by the Austrian Science Fund (FWF) \href{https://doi.org/10.55776/ESP4389824}{10.55776/ESP4389824}.
For open access purposes, the author has applied a CC BY public copyright license to any author-accepted manuscript version arising from this submission.}

\bibliographystyle{abbrvurl}
\bibliography{bib.bib}

\newpage
\section*{Appendix}

\begin{proof}[Proof of \cref{cor:cea}]
Let us denote
\[
\Bhnorm{v_h} = \bigl(\dgnorm{v_\IL}^2 + \dgnorm{v_\IT}^2\bigr)^{1/2}
\]
for $v_h = v_\IL + v_\IT \in V_h$.
For the analysis of \eqref{eq:PDEh}, we define for $u_h \in V_h$ the bilinear form
\begin{equation}\label{eq:Bh}
    \Bh (u_h, z_h ) := \sum_{K\in \Th} \langle \AK  u_h, q_K \rangle + a_h(u_h,v_h),
\end{equation}
where $z_h = (q_h, v_h) \in (\Qh \times \tch \ITh)$. We now show that $\Bh$ is $\tcb$-coercive
for the $\tcb$-coercivity map 
\[
    \tcb: V_h \to (\Qh \times \tch \ITh),\  u_h \mapsto  (\tcq u_h , \tct u_h ),
\]
which splits into $\tcq$ and $\tct$ that we define below. 

\paragraph{$\tct$-operator} We define $\tct:V_h\rightarrow \tch \ITh$ as
\begin{align}
    \tct u_h &:= 2\,\tch u_{\IT}  \in \tch \ITh,
    \addtocounter{equation}{1} \tag{$\tct\!|$\theequation}.
\end{align}
The continuity of $\tct$ follows immediately from the continuity of $\tch$.

Considering the $\tct$-part of $\Bh(u_h,\tcb u_h)$ we obtain
\begin{align}\label{eq:Bhstab1}
    \begin{split}
    \Re a_h(u_h,\tct u_h)
     &= 2\,\Re a_h(u_\IT + u_\IL, \tch u_\IT ) \\
     &= 2\,\Re a_h(u_\IT, \tch u_\IT) + 2\,\Re a_h(u_\IL, \tch u_\IT)  \\
     &\geq 2\,\Re a_h(u_\IT, \tch u_\IT)
     -2\,\CT (\Re a_h(u_\IT, \tch u_\IT))^{1/2} \norm{\ATh u_\IL}_{\Qh'} 
     \\
     &\geq \Re a_h(u_\IT, \tch u_\IT) - \CT^2 \norm{\ATh u_\IL}_{\Qh'}^2 .
    \end{split}
\end{align}

\paragraph{$\tcq$-operator} 
Let us now introduce the scaling
\[
\alpha=\CT^2+\frac{\ca}{\cA^2},
\]
Now we define $\tcq:V_h\rightarrow \Qh$ as
\begin{align}
    \tcq u_h := \sum_{K\in \Th} \tc_K u_h  \quad\text{with}\quad \tc_K u_h :=  \alpha\, R_K \AK u_\IL \in \QKh.
\end{align}
Here, $R_K:\QKh' \to \QKh$ denotes the Riesz lift, i.e.
\[
(R_K\varphi_h,q_h)_{\QKh}=\langle \varphi_h,q_h\rangle
\qquad\forall q_h\in\QKh,
\]
for $\varphi_h\in\QKh'$, $K\in\Th$.
\begin{align} \label{eq:tcTh-cont}
\norm{\tcq u_h}_{\Qh} & 
= \alpha \norm{\ATh u_\IL}_{\Qh'}
\quad \forall u_h \in V_h.
\end{align}
Now estimating the $\tcq$ part of $\Bh(u_h,\tcb u_h)$ we obtain 
\begin{eqs}\label{eq:Bhstab2}
\Re\langle \ATh u_h, \tcq u_h \rangle
&= \Re\sum_{K\in\Th}\alpha\,\langle \AK u_h, R_K \AK u_\IL\rangle
 = \Re\sum_{K\in\Th}\alpha\,\langle \AK u_\IL, R_K \AK u_\IL\rangle \\
&= \alpha \sum_{K\in\Th}\norm{\AK u_\IL}_{\QKh'}^2
\ge \alpha\cA^2\dgnorm{u_\IL}^2.
\end{eqs}

\paragraph{$\tcb$-coercivity} 
Summing up the two estimates \eqref{eq:Bhstab1} and \eqref{eq:Bhstab2} we obtain
\begin{eqs}\label{eq:Bhstab}
    \Re\Bh(u_h, \tcb u_h) 
    &\geq \Re a_h(u_\IT, \tch u_\IT) 
       - \CT^2 \norm{\ATh u_\IL}_{\Qh'}^2  
       + \alpha \norm{\ATh u_\IL}_{\Qh'}^2 \\
    &\geq  \Re a_h(u_\IT, \tch u_\IT) 
       + \frac{\ca}{\cA^2} \norm{\ATh u_\IL}_{\Qh'}^2 \\
    &\geq \ca(\dgnorm{u_\IT}^2 + \dgnorm{u_\IL}^2) .
\end{eqs}
Hence $\Bh$ is $\tcb$--coercive. 
For the first part of the theorem, it remains to show that $\tcb$ is bounded. Recalling the definition of $\tcb$ and the estimate \eqref{eq:tcTh-cont}, we have
\[
    \|\tcb u_h\|_{Z_h}^2
    = \norm{\tcq u_h}_{\Qh}^2 + \dgnorm{\tct u_h}^2
    \le 4\,\dgnorm{u_\IT}^2 + \CD^2\CA^2\alpha^2\dgnorm{u_\IL}^2
\]
for all $u_h\in V_h$.

Now let $u\in V$ be the solution of \eqref{eq:abstract} and $u_h\in V_h$ be the solution of \eqref{eq:PDEh}. Let $v_h\in V_h$ be arbitrary.
Set
\[
e_h:=v_h-u_h=e_\IL+e_\IT
\qquad\text{with}\qquad
e_\IL\in\ILh,\quad e_\IT\in\ITh.
\]
By \eqref{eq:Bhstab}, applied to $e_h$, we have
\begin{align*}
\ca \Bhnorm{e_h}^2
&\le \Re \Bh(e_h,\tcb e_h)
\le |\Bh(e_h,\tcb e_h)| \\
&\le |\Bh(v_h-u,\tcb e_h)| + |\Bh(u-u_h,\tcb e_h)| \\
&\le \norm{\ATh(v_h-u)}_{\Qh'}\norm{\tcq e_h}_{\Qh}
   + |a_h(v_h-u,\tct e_h)| \\
&\qquad
   + \norm{\ATh u-\ell_\Th}_{\Qh'}\norm{\tcq e_h}_{\Qh}
   + \norm{a_h(u,\cdot)-\ell_h}_{\tch\ITh'}\,\dgnorm{\tct e_h} \\
&\le \Bigl(\CA \tdgnorm{v_h-u} + \norm{\ATh u-\ell_\Th}_{\Qh'}\Bigr)\norm{\tcq e_h}_{\Qh} \\
&\qquad
   + \Bigl(\Ca \tdgnorm{v_h-u} + \norm{a_h(u,\cdot)-\ell_h}_{\tch\ITh'}\Bigr)\dgnorm{\tct e_h},
\end{align*}
where we used \eqref{eq:A_cont} and \eqref{eq:ah_cont}.

Moreover, by \eqref{eq:tcTh-cont} and \eqref{eq:Cstar},
\[
\norm{\tcq e_h}_{\Qh}
= \alpha \norm{\ATh e_\IL}_{\Qh'}
\le \alpha\CA\,\tdgnorm{e_\IL}
\le \alpha\CA\CD\,\dgnorm{e_\IL}
\le \alpha\CA\CD\,\Bhnorm{e_h},
\]
and by \eqref{eq:ah_coerc},
\[
\dgnorm{\tct e_h}
=2\,\dgnorm{\tch e_\IT}
\le 2\,\dgnorm{e_\IT}
\le 2\,\Bhnorm{e_h}.
\]

Therefore,
\begin{align*}
\ca \Bhnorm{e_h}
&\le
\CD\,\alpha\CA\Bigl(\CA \tdgnorm{v_h-u} + \norm{\ATh u-\ell_\Th}_{\Qh'}\Bigr) \\
&\qquad
+2\Bigl(\Ca \tdgnorm{v_h-u} + \norm{a_h(u,\cdot)-\ell_h}_{\tch\ITh'}\Bigr).
\end{align*}
Using $\dgnorm{e_h}\le \sqrt{2}\,\Bhnorm{e_h}\le 2\,\Bhnorm{e_h}$ and $\CD\ge 1$, we obtain
\begin{align*}
\dgnorm{v_h-u_h}
&\le
2\CD\frac{\alpha\CA^2}{\ca}\,\tdgnorm{v_h-u}
+2\CD\frac{\alpha\CA}{\ca}\,\norm{\ATh u-\ell_\Th}_{\Qh'}\\
&\qquad
+4\CD\frac{\Ca}{\ca}\,\tdgnorm{v_h-u}
+4\CD\frac{1}{\ca}\,\norm{a_h(u,\cdot)-\ell_h}_{\tch\ITh'}.
\end{align*}
Recalling
\[
\alpha=\CT^2+\frac{\ca}{\cA^2},
\]
this is exactly \eqref{eq:cea}.

\medskip
\noindent\emph{Remark.}
    The assumption \eqref{eq:Cstar} can be relaxed to $\tdgnorm{v_\IL}\le \CD\,\dgnorm{v_\IL}\qquad\forall v_\IL\in\ILh$, i.e.\ the norm equivalence only needs to hold on the subspace $\ILh$.
    This comes at the price of more complicated constants in the estimates.

\end{proof}

\begin{proof}
Let $e_h\in\ITh$ be defined by
\[
a_h(e_h,w_h)=a_h(u_h-u,w_h)\qquad\forall w_h\in\tch\ITh .
\]
Since $u_h$ solves \eqref{eq:PDEh}, $a_h(u_h-u,w_h)=\ell_h(w_h)-a_h(u,w_h)=-(a_h(u,\cdot)-\ell_h)(w_h)$.
Testing with $w_h=\tch e_h$ and using \eqref{eq:ah_coerc} yields
\[
\ca\,\dgnorm{e_h}^2\le \Re a_h(e_h,\tch e_h)
\le \norm{a_h(u,\cdot)-\ell_h}_{\tch\ITh'}\dgnorm{\tch e_h}
\]
hence, using $\dgnorm{\tch e_h}\le \dgnorm{e_h}$,
\begin{equation}\label{eq:eh_bound_short}
\dgnorm{e_h}\le \frac1{\ca}\,\norm{a_h(u,\cdot)-\ell_h}_{\tch\ITh'}.
\end{equation}
By \eqref{eq:IH_DG_dom}, $\norm{e_h}_{\IH}\le C_{\IH}\dgnorm{e_h}$, so
\begin{equation}\label{eq:eh_IH_bound_short}
\norm{e_h}_{\IH}\le \frac{C_{\IH}}{\ca}\,\norm{a_h(u,\cdot)-\ell_h}_{\tch\ITh'}.
\end{equation}
Set $e:=u_h-u-e_h\in\Vsh$. By construction,
\[
a_h(e,w_h)=a_h(u_h-u,w_h)-a_h(e_h,w_h)=0\qquad\forall w_h\in\tch\ITh.
\]

Let $z\in W$ solve the dual problem
\[
a(v,z)=(e,v)_{\IH}\qquad\forall v\in V.
\]
Then $a(\cdot,z)\in\IH'$ and
$\sup_{v\in V}\frac{|a(v,z)|}{\norm{v}_{\IH}}=\norm{e}_{\IH}$.
By \eqref{eq:a-bit-consistent-exp},
\[
\norm{e}_{\IH}^2=(e,e)_{\IH}=a(e,z)=a_h(e,z).
\]
For any $z_h\in\tch\ITh$ we have $a_h(e,z_h)=0$, hence
\[
\norm{e}_{\IH}^2=a_h(e,z-z_h).
\]
For arbitrary $w_h\in W_h$ we write $z-z_h=(w_h-z_h)+(z-w_h)$ and use \eqref{eq:a-bit-cont-exp}:
\begin{align*}
\norm{e}_{\IH}^2
&\le C_{\rm adj}\tdgnorm{e}\Big(\norm{w_h-z_h}_{W_h}+\norm{z-w_h}_{\Wsh}\Big)\\
&\le C_{\rm adj}\tdgnorm{e}\Big(\norm{z-z_h}_{W_h}+2\norm{z-w_h}_{\Wsh}\Big).
\end{align*}
Taking the infimum over $z_h\in\tch\ITh$ and $w_h\in W_h$ and using
\eqref{eq:H-reg-bound-exp}--\eqref{eq:H-reg-bound2-exp} gives
\[
\norm{e}_{\IH}\le C_{\rm adj}\,(C_{\rm reg}^{(1)}+2C_{\rm reg}^{(2)})\,h_H\,\tdgnorm{e}.
\]
Since $\tdgnorm{e}\le \tdgnorm{u_h-u}+\tdgnorm{e_h}$ and $e_h\in V_h$,
\[
\tdgnorm{e_h}\le \CD\,\dgnorm{e_h}\le \frac{\CD}{\ca}\,\norm{a_h(u,\cdot)-\ell_h}_{\tch\ITh'}
\quad\text{by }\eqref{eq:eh_bound_short}.
\]
Therefore
\begin{equation}\label{eq:e_IH_bound_short}
\norm{e}_{\IH}\le C_{\rm adj}\,(C_{\rm reg}^{(1)}+2C_{\rm reg}^{(2)})\,h_H
\Big(\tdgnorm{u_h-u}+\frac{\CD}{\ca}\norm{a_h(u,\cdot)-\ell_h}_{\tch\ITh'}\Big).
\end{equation}

Fix $v_h\in V_h$. By triangle inequality and norm equivalence on $V_h$,
\[
\tdgnorm{u_h-u}\le \tdgnorm{u_h-v_h}+\tdgnorm{v_h-u}
\le \CD\,\dgnorm{u_h-v_h}+\tdgnorm{u-v_h}.
\]
Moreover $\dgnorm{u_h-v_h}\le \dgnorm{u_h-u}+\dgnorm{u-v_h}\le \dgnorm{u_h-u}+\tdgnorm{u-v_h}$, hence
\[
\tdgnorm{u_h-u}\le \CD\,\dgnorm{u_h-u}+(1+\CD)\tdgnorm{u-v_h}.
\]
Using $\dgnorm{u_h-u}\le \dgnorm{v_h-u_h}+\dgnorm{u-v_h}\le \dgnorm{v_h-u_h}+\tdgnorm{u-v_h}$ and \eqref{eq:cea},
\[
\dgnorm{u_h-u}\le (1+C_{\cea}^{(1)})\tdgnorm{u-v_h}
+ C_{\cea}^{(2)}\norm{\ATh u-\ell_\Th}_{\Qh'}
+ C_{\cea}^{(3)}\norm{a_h(u,\cdot)-\ell_h}_{\tch\ITh'} .
\]
Insert this into the previous estimate and take $\inf_{v_h\in V_h}$ to get
\begin{align*}
\tdgnorm{u_h-u}\le\ &
(1+2\CD+\CD C_{\cea}^{(1)})\inf_{v_h\in V_h}\tdgnorm{u-v_h}\\
&+ \CD C_{\cea}^{(2)}\norm{\ATh u-\ell_\Th}_{\Qh'}
+ \CD C_{\cea}^{(3)}\norm{a_h(u,\cdot)-\ell_h}_{\tch\ITh'} .
\end{align*}

Finally, $\norm{u-u_h}_{\IH}\le \norm{e_h}_{\IH}+\norm{e}_{\IH}$ and \eqref{eq:eh_IH_bound_short}, \eqref{eq:e_IH_bound_short}
yield \eqref{eq:IH_bound_framework}.
\end{proof}

\end{document}